\theoremstyle{definition}
\newtheorem{remark}{\bf Remark}
\newtheorem{example}{\bf Example}
\theoremstyle{plain}
\newtheorem{corollary}{\bf Corollary}
\newtheorem{lemma}{\bf Lemma}
\newtheorem{proposition}{\bf Proposition}
\newtheorem{theorem}{\bf Theorem}
{}
\numberwithin{equation}{section}
\theparentequation\alph{equation}}
\let\oldtocsection=\tocsection
\let\oldtocsubsection=\tocsubsection
\renewcommand{\tocsection}[2]{\hspace{-1mm}\bf\oldtocsection{ #1}{#2}}
\renewcommand{\tocsubsection}[2]{\hspace{6.9mm}\oldtocsubsection{#1}{#2}}
\begin{document}
\title[Mean curvature flow in a harmonic-Ricci flow background]{Mean curvature flow into an ambient Riemannian manifold evolving by Ricci flow coupled with harmonic map heat flow}
\author[José N.V. Gomes]{José N.V. Gomes$^1$\orcidlink{0000-0001-5678-4789}}
\author[Matheus Hudson]{Matheus Hudson$^2$}
\author[Carlos Maurício]{Carlos M. de Sousa$^3$}
\address{$^{1}$Departament of Mathematics, Universidade Federal de São Carlos, Rod. Washington Luís, Km 235, 13.565-905, São Carlos, São Paulo, Brazil.}
\address{$^2$Institut für Mathematik, Carl von Ossietzky Universität Oldenburg, Ammerländer Heerstr, 114–118, 26129, Oldenburg, Germany.}
\address{$^{3}$Departament  of Mathematics, Universidade Federal de Rondônia, Av. Presidente Dutra - até 2965 - lado ímpar, 76.801-059, Porto Velho, Rondônia, Brazil.}
\email{$^1$jnvgomes@ufscar.br}
\email{$^{2}$matheus.hudson.gama.dos.santos@uol.de}
\email{$^3$carlosmauricio@unir.br}
\urladdr{$^{1,2}$https://www2.ufscar.br}
\urladdr{$^2$https://uol.de}
\urladdr{$^{3}$https://unir.br/homepage}
\keywords{Ricci harmonic flow, mean curvature flow.}
\subjclass[2020]{Primary: 53E10; Secondary: 53E20, 58E20}

\dedicatory{Dedicated to Professor Keti Tenenblat on the occasion of her 80th birthday}

\begin{abstract}
The main objective of this article is to study the mean curvature flow into an ambient compact smooth manifold M with boundary and with a Riemannian metric that evolves by a self-similar solution of the Ricci flow coupled with the harmonic map heat flow of a map from M to a Riemannian manifold N. In this context, we address a functional associated with this flow and calculate its variation along parameters that preserve the weighted volume measure. An extension of Hamilton’s differential Harnack expression appears by considering the boundary of M evolving by mean curvature flow, which must vanish on the gradient steady soliton case. Next, we obtain a Huisken monotonicity-type formula for the mean curvature flow in the proposed background. We also show how to construct a family of mean curvature solitons and establish a characterization of such a family.
\end{abstract}

\maketitle

\section{\bf Introduction}
This article concerns mean curvature flow in Riemannian manifolds evolving by Ricci flow coupled with harmonic map heat flow. Many works inspire our approach, as we shall describe now.

We begin with the work by Eells and Sampson~\cite{Eells_Sampson}, which pioneered the study of harmonic maps that arise from the variation of the energy-type functional as a generalization of Dirichlet's energy functional. There, they aimed to establish the existence of harmonic maps which are homotopic to a given map $\phi: (M,g) \rightarrow (N,\gamma)$, where $(M,g)$ and $(N,\gamma)$ are closed Riemannian manifolds, i.e., compact and without boundary. For it, they considered the \emph{energy functional} $E$ of $\phi$ as follows
\begin{equation*}
E(\phi):=\frac{1}{2}\int_{M}|\nabla\phi|^{2}dM,
\end{equation*}
and they showed that, for a smooth family of maps $\phi_t: (M,g) \rightarrow (N,\gamma)$, with $t \in (-\epsilon,\epsilon)$, variational vector field $V$ and $\phi_0=\phi$, the first variation formula of $E$ is given by 
\[ \dfrac{d}{dt}\Big|_{t=0}E(\phi_t)= - \int_{M}\langle V, \tau_{g,\gamma}\phi \rangle dM,\]
where $\tau_{g,\gamma}\phi$ denotes the tension field of $\phi$, which depend on the Riemannian metrics $g$ and $\gamma$.  In particular, a harmonic map $\phi$ (i.e., $\tau_{g, \gamma}\phi=0$) is a critical point of $E$, see Section~\ref{Sec:preliminaries} for details.

The idea is to deform a given map $\phi \in C^{\infty}(M, N)$ along the flow given by $\tau_{g,\gamma}\phi_t$ to obtain a harmonic map free-homotopic to $\phi.$  When such a deformation is possible, its flow $\phi_t$ becomes a solution of the system of parabolic partial differential equations
\begin{equation}\label{Eells_Sampson_Problem}
\dfrac{\partial\phi_t}{\partial t} =\tau_{g,\gamma} \phi_t \quad \hbox{with} \quad \phi_t|_{t=0}=\phi.
\end{equation}

System~\eqref{Eells_Sampson_Problem} is known as the harmonic map heat flow. For our context, we highlight the following particular result by Eells and Sampson. We observe that they worked in a more general setting by imposing some boundedness on the embedding of $N$ in some Euclidean space $\mathbb{R}^d$; such conditions are automatically fulfilled if $N$ is compact. Eells and Sampson's theorem reads as follows. 

\vspace{0.2cm}
\emph{Let $(M,g)$ and $(N,\gamma)$ be closed Riemannian manifolds, and consider a smooth map $\phi:(M,g) \to (N,\gamma)$. If $(N,\gamma)$ has nonpositive Riemannian curvature, then there exists a unique global smooth solution of \eqref{Eells_Sampson_Problem} which converges smoothly to a harmonic map homotopic to $\phi$.}

\vspace{0.2cm}
One of the important aspects of harmonic maps is that they generalize the concept of harmonic functions. In particular, closed geodesics and minimal surfaces are some examples. If $\phi$ is an isometric immersion of a Riemannian manifold $M$ in an Euclidean space, then the tension field has the simplified notation $\Delta_g\phi$ and coincides with the mean curvature $H(\phi)$ (see Eells and Sampson~\cite{Eells_Sampson}, and Takahashi~\cite{Takahashi}). Hence, $\phi$ is harmonic if and only if it is minimal. Moreover, any isometry of $M$ is harmonic, and any covering map is harmonic.

Hamilton extended Eells and Sampson's theorem for compact Riemannian manifolds with boundary. He showed that the first variation formula of the energy functional $E(\phi)$ of a smooth map $\phi: (M,g) \rightarrow (N,\gamma)$, now between Riemannian manifolds with boundary, is given by
\begin{equation*}
\left.\begin{matrix}
\dfrac{d}{dt}
\end{matrix}\right|_{t=0}E(\phi_t)=-\int_{M}\langle V, \tau_{g, \gamma}\phi \rangle dM + \int_{\partial M}\langle V, \nabla_{0}\phi \rangle dA.
\end{equation*}
So, a harmonic map $\phi$ with Neumann boundary condition $\nabla_{0}\phi = 0$ is a critical point of the energy functional. Hamilton noted that there are three natural boundary value problems to be addressed: $(i)$ Dirichlet Problem for a harmonic map $\phi$ with given values $\phi=\hat\phi$ on $\partial M$; $(ii)$ Neumann Problem for a map $\phi$ not specified on $\partial M$ but with auxiliary condition that the normal derivative $\nabla_{0}\phi=0$ on $\partial M$; and $(iii)$ Mixed Problem, which, in contrast with the two previous cases, this one considers $\partial N$, since it is assumed that $\phi$ maps $\partial M$ into $\partial N$, but in an arbitrary form, and also that the normal derivative $\nabla_{0}\phi$ taken at a point in $\partial M$ is normal to $\partial N$. In all cases, he proved existence results by assuming that $N$ has nonpositive Riemannian curvature, moreover, that $\partial N$ is convex, for $(i)$ and $(ii)$, and totally geodesic for $(iii)$. For details, see Hamilton~\cite{Hamilton75}.

Motivated by the previously discussed theory coupled with the promising case of Ricci flow introduced by Hamilton~\cite{hamilton1982three}, we consider a family of closed hypersurfaces $\Sigma_t$ in $(M,g(t))$  and a family of smooth maps $\phi_t: (M,g(t))\to (N,\gamma)$ with Riemannian metrics $g(t)$ evolving by some geometric flow and $\Sigma_t$ evolving by mean curvature flow. We now contextualize with results and historical data our setting of study. 

It is known that the Ricci flow was expected to have a gradient-like structure, as well as the mean curvature flow case. Indeed, this was one of Perelman's contributions by modifying the Hilbert-Einstein functional in the context of weighted compact smooth manifolds. He defined the functional ${\mathcal F}(g,f)$ on the space of metrics and smooth functions on a closed smooth manifold, whose variation $\delta{\mathcal F}(g,f)$ provides a gradient-like structure to the Ricci flow with weighted measure-preserving, see Perelman~\cite{Perelman}. 

Four years later, List~\cite{Bernhard_List} presented a connection between Ricci flow on an $m$-dimensional closed Riemannian manifold $(M,g)$ and the Einstein's static vacuum equations through a coupled system of Ricci flow and heat equation with a coupling constant $\alpha_{m}=(m-1)/(m-2)$, with $m\geqslant3$, and then he defined a functional $\mathcal F(g,f,w)$ on the space of metrics and cartesian product of smooth functions on a closed smooth manifold, whose variation $\delta{\mathcal F}(g,f, w)$ provides a gradient-like flow from this coupled system.

In the boundary case, Ecker~\cite{Klaus-Ecker} defined a version of the $\mathcal{W}$-functional of Perelman for the Ricci flow on bounded Euclidean domains with smooth boundary. Curiously, Hamilton's differential Harnack expression~\cite{Hamilton} on the boundary integrand appears in its time-derivative formula. Based on Ecker's work, Lott \cite{John_Lott} defined the functional $I_{\infty}(g,f)$ on the space of metrics and smooth functions on a compact smooth manifold with boundary to be a weighted version of the Gibbons-Hawking-York action~\cite{Gibbons-Hawking,York} from which he found an extension of Hamilton's differential Harnack expression on the boundary integrad. It is also worth noting that Magni, Mantegazza and Tsatis~\cite{magni2013flow} found a Huisken monotonicity-type formula~\cite{Huisken1990} for the mean curvature flow in an ambient smooth manifold with Riemannian metric that evolves by a self-similar solution to the Ricci flow. 

More recently, the first and second authors considered Lott’s program in the context of mean curvature flow in an extended Ricci flow background. They studied variational properties of an appropriate extended version of Lott's functional in the context of List's work, namely, the extended weighted Gibbons-Hawking-York action $I^{\alpha_m}_\infty(g, f, w)$  on an $m$-dimensional compact smooth manifold with boundary. They obtained evolution equations for the second fundamental form and the mean curvature in an extended Ricci flow background, and then an extension of Hamilton's differential Harnack expression appears as well as a Huisken monotonicity-type formula for the mean curvature solitons in this background, see Gomes and Hudson~\cite{Gomes_Hudson} for details. 

In the more general context, Müller~\cite{MullerThesis, muller2012ricci} worked in a new geometric flow which consists of a coupled system of the Ricci flow on a closed Riemannian manifold $(M, g)$ with the harmonic map heat flow of a map $\phi: (M,g) \rightarrow (N,\gamma),$ where $(N, \gamma)$ is a closed Riemannian manifold. Precisely, he considered a family of Riemannian metrics $g(t)$ on $M$ and a family of smooth maps $\phi(t)$ from $M$ to $N$ to define $(g(t), \phi(t))_{t \in[0, T)}$ as a solution to the Ricci flow coupled with harmonic map heat flow, \textit{$(R H)_{\alpha}$ flow} for short, namely
\begin{align}\label{(RH)}
 \left\{\begin{array}{lcl}
\dfrac{\partial}{\partial t}g(t)=-2 \operatorname{Ric}_{g(t)} +2\alpha(t) \nabla \phi(t) \otimes \nabla \phi(t),\\[1ex]
\dfrac{\partial}{\partial t} \phi(t)=\tau_{g(t),\gamma} \phi(t),
\end{array}\right.   
\end{align}
where $\alpha(t)$ is a nonnegative coupling constant. For an account of $(RH)_{\alpha}$ flow, including proof of short-time existence and uniqueness of solutions to~\eqref{(RH)}, see~\cite[Sect.~4.2]{muller2012ricci}.

Müller realized that his coupled system may behave less singularly than the Ricci flow or the standard harmonic map flow alone. To interpret~\eqref{(RH)} as a gradient flow by means a functional ${\mathcal F}_{\alpha}(g, f, \phi)$ for a fixed measure, he worked with the heat operator $\square=\frac{\partial}{\partial t} - \Delta_g$ whose formal adjoint $\square^{*}$ is given by
\begin{align}\label{adjoint:square}
\square^{*}=-\frac{\partial}{\partial t} - \Delta_g +R_g-\alpha|\nabla\phi|^{2}    
\end{align}
along the $(RH)_\alpha$ flow. 

Müller's approach motivated the first theorem of this article. Next, we continue to establish our study context more precisely. 

A gradient soliton to the $(RH)_{\alpha}$ flow is a self-similar solution $\big(\overline g(t), \overline\phi(t) \big)$ of ~\eqref{(RH)} given by
\begin{align*}
\left\{
\begin{array}{lcl}
\overline{g}(t)  = \sigma(t)\psi_t^* g,\\ [1ex]
\overline{\phi}(t) = \psi_t^* \phi,
\end{array}
\right.
\end{align*}
for some initial value $(g,\phi)$, where $\psi_t$ is a  smooth one-parameter family of diffeomorphisms of $M$ generated from the flow of $\nabla_g f/\sigma(t)$, $f\in C^\infty(M)$, and  $\sigma(t)$ is a positive smooth function on $t.$ 
By setting $\overline{f}(t)=\psi_t^*f$,  from~\eqref{(RH)} we can obtain
\begin{align}
\left\{
\begin{array}{rcl}\label{mod_grad_Ricci_soliton}
\operatorname{Ric}_{\overline{g}} + \nabla_{\overline{g}}^2\,\overline{f} - \alpha  \nabla\overline{\phi} \otimes \nabla\overline{\phi} &=& \dfrac{c}{2(T-t)}\overline{g},\\[2ex]
\tau_{\overline{g},\gamma} \overline{\phi} &=& \langle\nabla_{\overline{g}} \overline{\phi},\nabla_{\overline{g}} \overline{f}\rangle,
\end{array}
\right.
\end{align}
where $c= 0$ in the steady case (for $t \in  \mathbb{R}$ and $\psi_0=\operatorname{Id}$), $c= 1$ in the shrinking case (for $t \in (-\infty, T)$ and $\psi_{T- 1}=\operatorname{Id}$) and $c=1$ in the expanding case (for $t \in (T,\infty)$ and $\psi_{T+1}=\operatorname{Id}$).  Moreover,
\begin{align}\label{self-solution}
\dfrac{\partial }{\partial t} \overline{f} = |\nabla_{\overline{g}} \overline{f}|_{\overline{g}}^2\,.
\end{align}
Function $\overline{f}$ is called the \emph{potential function}. 

As in \cite{Gomes_Hudson}, we consider the mean curvature flow in the following context: let $(g(t),\phi(t))$ be an $(R H)_{\alpha}$ flow in $M \times [0,T)$. Given an $(m-1)$-dimensional closed smooth manifold $\Sigma$, and let $\{x(\cdot,t); t \in [0,T)\}$ be a smooth one-parameter family of immersions of $\Sigma$ in $M$.  For each $t\in [0,T),$ set  $x_t := x(\cdot, t)$ and $\Sigma_t$ for the hypersurface $x_t(\Sigma)$ of $(M,g(t)),$ which we can also write as $\Sigma_t:=\big(\Sigma, x_t^*g(t)\big)$. Suppose  that the family  $\mathscr F :=\{\Sigma_t\,;\, t\in [0,T)\}$ evolves under mean curvature flow, MCF for short, \begin{align*}
\left\{
\begin{array}{rcl}
\dfrac{\partial}{\partial t}x(p,t) &=&  H(p,t) e(p,t),\\ [1ex]
x(p, 0) &=& x_0(p),
\end{array}
\right.
\end{align*}
where $H(p,t)$ and $e(p,t)$ are the mean curvature and the unit normal of $\Sigma_t$ at $p\in\Sigma$, respectively. In this setting, we say that $\mathscr{F}$ is a \emph{MCF in the $(g(t),\phi(t))-(RH)_\alpha$ flow background}. In the particular case $\big(g(t),\phi(t)\big)=\big(\overline{g}(t),\overline{\phi}(t)\big)$ is a self-similar solution to the $(RH)_\alpha$ flow on $M$ with potential function $\overline{f}$, a hypersurface $\Sigma_t\in \mathscr F$ is a \emph{mean curvature soliton}, if
\begin{align*}
H(p,t)+e(p,t)\overline{f}=0 \quad\hbox{on}\quad\Sigma.
\end{align*}
Here, $e(\,\cdot\,,t)$ must be the inward unit normal vector field on $\Sigma_t.$ 

Now, we consider an $m$-dimensional compact smooth manifold $M$ with boundary $\partial M$. Let ${\rm met}(M)$ be the set of all Riemannian metrics $g$ on $M.$ We define the  functional $\mathcal F^{\alpha}_\infty$  on the product $\mathscr P(M,N):={\rm met}(M)\times C^\infty (M)\times C^\infty(M,N)$ as
\begin{align}\label{MGHYF}
\mathcal F^{\alpha}_\infty(g, f, \phi):=\int_{M} \Big(R_\infty -\alpha |\nabla \phi|^2 \Big)e^{-f} dM + 2\int_{\partial M}H_\infty e^{-f}dA,
\end{align}
where $R_\infty:= R_g + 2 \Delta_g f - |\nabla f|_g^2$ is the  \emph{weighted scalar curvature} of $g$, the function $H_\infty:=H_g + e_0 f$ is  the \emph{weighted mean curvature} with respect to the inward unit normal vector field $e_0$ on $\partial M$, and the forms $dM$ and $dA$ are the $m$-dimensional Riemannian measure of $(M,g)$ and the $(m-1)$-dimensional Riemannian measure of $(\partial M,g),$ respectively.

We observe that $\mathcal F^{\alpha}_\infty$ is the proper extension for our context of the energy functionals  
$E(\phi),$ $\mathcal F(g,f),$ $\mathcal F(g,f,w),$ $I_\infty(g,f),$  $I^{\alpha_m}_\infty(g, f, w)$ and  $\mathcal F_{\alpha}(g,f,\phi)$ previously mentioned. Furthermore, it is already clear that $R_\infty$ arises quite naturally, as observed by Perelman~\cite[Sect.~1.3]{Perelman}, and $H_\infty$ is in fact the appropriate geometric object when we are using a weighted measure (see, e.g., Gromov~\cite[Sect.~9.4.E]{Gromov}).

Our first main result is a variational formula for  $\mathcal F_{\infty}^\alpha$ from which we can obtain a gradient-like structure for $(RH)_\alpha$ flow and an extension of Hamilton’s differential Harnack expression of the mean curvature flow in Euclidean space. It reads as follows (see Sections~\ref{Sec:preliminaries} and \ref{Sec:VWEGHYA} for definitions and notations).

\begin{theorem}\label{principal_theorem}
Let $M$ be an $m$-dimensional compact smooth manifold with boundary $\partial M,$ and let $\mathscr F$ be the MCF of $\partial M$ in the $\big(g(t),\phi(t)\big)-(RH)_\alpha$ flow background with  Neumann boundary condition $\nabla_0 \phi =0.$ If $u:=e^{-f}$ is a solution to the conjugate heat equation
\begin{equation}\label{back:heat:eq}
\square^{*}u  =0 \quad  \mbox{in} \quad M\times[0,T)
\end{equation}
with $e_0 u = Hu$ on $\partial M$, then
\begin{align*}
\dfrac{d}{d t}\mathcal F^{\alpha}_\infty =& 2 \int_{M} \Big(|{\operatorname {Ric}}\, + \nabla^2 f -\alpha \nabla\phi \otimes \nabla\phi|^2 + \alpha|\tau_{g,\gamma} \phi - \langle\nabla \phi, \nabla f\rangle|^2\Big) e^{-f}dM \\
& + 2 \int_{\partial M} \Big(\frac{\partial H}{\partial t}  - 2\langle \widehat{\nabla} f, \widehat{\nabla} H\rangle + \mathcal{A}
(\widehat{\nabla} f, \widehat{\nabla} f)  + 2 R^{0 i}\widehat{\nabla}_i f  - \dfrac{1}{2} \nabla_0 R\\
&-HR_{00} +  \alpha    \mathcal{A}( \widehat{\nabla} \phi, \widehat{\nabla} \phi) \Big)  e^{-f} dA,
\end{align*}
where $\mathcal{A}$ is the second fundamental form of $\partial M,$ and $\widehat{\nabla}$ denotes the gradient on $\partial M.$ 
\end{theorem}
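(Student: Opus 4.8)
The plan is to split $\mathcal F^{\alpha}_\infty$ into a bulk Müller-type energy plus a pure boundary integral, differentiate along the coupled flow while taking account of the motion of $\partial M$, and then reorganise the resulting boundary terms into the Harnack-type expression. As a first reduction, integrating $\big(2\Delta_g f-|\nabla f|^2\big)e^{-f}=|\nabla f|^2 e^{-f}+2\operatorname{div}(e^{-f}\nabla f)$ over $M$ and applying the divergence theorem with the inward unit normal $e_0$ shows that the $2\Delta_g f$ term contributes a boundary integral $-2\int_{\partial M}(e_0 f)e^{-f}dA$ which cancels the $e_0 f$ part of the Gibbons--Hawking--York term; hence
\[
\mathcal F^{\alpha}_\infty(g,f,\phi)=\int_M\!\big(R_g+|\nabla f|^2-\alpha|\nabla\phi|^2\big)e^{-f}dM+2\int_{\partial M}\!H_g\,e^{-f}dA=:\mathcal F_\alpha(g,f,\phi)+2\int_{\partial M}\!H_g\,e^{-f}dA ,
\]
so the problem reduces to differentiating the closed-manifold energy $\mathcal F_\alpha$ over the moving region, plus $2\int_{\Sigma_t}H_g e^{-f}dA$ over the moving boundary.

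For the first piece, let $M_t$ be the region enclosed by $\Sigma_t=\partial M_t$, moving with normal speed $H$ inward along $e_0$, so that $\frac{d}{dt}\int_{M_t}\mathcal G\,dM=\int_{M_t}\partial_t(\mathcal G\,dM)-\int_{\partial M_t}\mathcal G\,H\,dA$ for $\mathcal G=(R_g+|\nabla f|^2-\alpha|\nabla\phi|^2)e^{-f}$. I would compute $\partial_t(\mathcal G\,dM)$ by Müller's pointwise argument: insert $\partial_t g=-2\operatorname{Ric}+2\alpha\nabla\phi\otimes\nabla\phi$, $\partial_t\phi=\tau_{g,\gamma}\phi$, and the conjugate heat equation $\square^{*}u=0$ for $u=e^{-f}$, then use the contracted second Bianchi identity, the Bochner formula and the Eells--Sampson identity for $|\nabla\phi|^2$. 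On a closed manifold this integrates to exactly the bulk perfect-square term $2\int_M(|\operatorname{Ric}+\nabla^2f-\alpha\nabla\phi\otimes\nabla\phi|^2+\alpha|\tau_{g,\gamma}\phi-\langle\nabla\phi,\nabla f\rangle|^2)e^{-f}dM$; over $M_t$ every integration by parts instead leaves a boundary integral over $\partial M_t$, the principal one being the leftover $2\int_{\partial M}(\operatorname{Ric}+\nabla^2f-\alpha\nabla\phi\otimes\nabla\phi)(e_0,\nabla f)e^{-f}dA$ from integrating $-2\int_{M_t}\langle\nabla^2f,\operatorname{Ric}+\nabla^2f-\alpha\nabla\phi\otimes\nabla\phi\rangle e^{-f}$ by parts.

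For the second piece I would differentiate $2\int_{\Sigma_t}He^{-f}dA$ directly along the moving boundary, writing it as $2\int_{\Sigma_t}[\tfrac{\partial H}{\partial t}e^{-f}+H\tfrac{d}{dt}(e^{-f}\circ x_t)+He^{-f}\cdot(\text{logarithmic rate of }dA)]$ and leaving $\tfrac{\partial H}{\partial t}$ (the mean curvature flow time-derivative of $H$) unexpanded, with $\tfrac{d}{dt}(f\circ x_t)=\partial_t f+H\,e_0f$, $\partial_t f$ supplied by $\square^{*}u=0$, and $\partial_t(dA)=(-H^2-\operatorname{tr}_{\Sigma}\operatorname{Ric}+\alpha|\widehat\nabla\phi|^2)dA$. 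Then comes the core of the argument: collecting all the boundary integrals — those from the bulk integrations by parts, the transport term $-\int_{\partial M}\mathcal G H\,dA$, and the ones just listed — and simplifying them. Here the Neumann condition $\nabla_0\phi=0$ removes every $\nabla_0\phi$ factor and packs the $\phi$-contributions into $\alpha\mathcal A(\widehat\nabla\phi,\widehat\nabla\phi)$; the boundary condition $e_0u=Hu$, i.e. $e_0f=-H$ and $e_0(e^{-f})=He^{-f}$, disposes of the lone $e_0f$ factors; the resulting identity $(\nabla^2f)(e_0,e_i)=-\widehat\nabla_iH+\mathcal A(e_i,\widehat\nabla f)$ on $\partial M$ manufactures the $-2\langle\widehat\nabla f,\widehat\nabla H\rangle+\mathcal A(\widehat\nabla f,\widehat\nabla f)$ and $2R^{0i}\widehat\nabla_i f$ terms; and the Gauss--Codazzi equations together with $\operatorname{tr}_g\mathcal A=H$ handle the remaining mixed curvature contributions, leaving the boundary integrand equal to $2\mathcal Z e^{-f}$ with $\mathcal Z$ the asserted expression.

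I expect this final bookkeeping to be the main obstacle: a large number of second-order boundary quantities ($\nabla_0\nabla_0 f$, $\widehat\Delta H$, $e_0 R$, $e_0|\nabla f|^2$, traces of $\operatorname{Ric}$ and of $\mathcal A$, and so on) enter with no individual geometric meaning, and only the correct regrouping — pinned down by the requirements that $\mathcal Z$ reduce to Hamilton's differential Harnack expression of the mean curvature flow in Euclidean space (with $V=-\widehat\nabla f$) plus curvature corrections, and that it vanish identically on gradient steady solitons of the $(RH)_\alpha$ flow — causes them to telescope into $\tfrac{\partial H}{\partial t}-2\langle\widehat\nabla f,\widehat\nabla H\rangle+\mathcal A(\widehat\nabla f,\widehat\nabla f)+2R^{0i}\widehat\nabla_i f-\tfrac12\nabla_0R-HR_{00}+\alpha\mathcal A(\widehat\nabla\phi,\widehat\nabla\phi)$.
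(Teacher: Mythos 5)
Your overall strategy is a legitimately different route from the paper's: the paper never differentiates over the moving region directly, but instead solves the conjugate heat equation backwards in time, pulls everything back by the diffeomorphisms generated by $-\nabla_{g(t)}f$ (which track the moving boundary precisely because $e_0u=Hu$ forces $e_0f=-H$), reduces to the gauge-fixed ``modified $(RH)_\alpha$ flow'' on a fixed domain, and then quotes its Proposition~\ref{var:MWGHY}, Lott's Lemma~1, Simons' identity and the evolution equations \eqref{sec_fund_evolution}--\eqref{List_mean_evolution}, finishing with the gauge identity $\partial_t H_{\widetilde g}=\partial_t H_g-\langle\widehat\nabla f,\widehat\nabla H\rangle$, which is exactly where the coefficient $-2$ of $\langle\widehat\nabla f,\widehat\nabla H\rangle$ comes from. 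Your opening reduction $\mathcal F^{\alpha}_\infty=\int_M(R+|\nabla f|^2-\alpha|\nabla\phi|^2)e^{-f}dM+2\int_{\partial M}He^{-f}dA$ is correct (with the inward normal the divergence term does cancel the $e_0f$ part of the boundary integrand), and a Reynolds-transport computation over $M_t$ is in principle viable.

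The genuine gap is that the decisive step --- showing that the totality of boundary terms produced by the bulk integrations by parts, the transport term $-\int_{\partial M}\mathcal G H\,dA$, and the differentiation of $2\int_{\Sigma_t}He^{-f}dA$ collapses to $2\big(\partial_t H-2\langle\widehat\nabla f,\widehat\nabla H\rangle+\mathcal A(\widehat\nabla f,\widehat\nabla f)+2R^{0\hat i}\widehat\nabla_{\hat i}f-\tfrac12\nabla_0R-HR_{00}+\alpha\mathcal A(\widehat\nabla\phi,\widehat\nabla\phi)\big)e^{-f}$ --- is never carried out; you explicitly defer it and then argue that the answer is ``pinned down'' by requiring reduction to Hamilton's Harnack expression in the Euclidean case and vanishing on steady solitons. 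That is not a valid argument: those two requirements do not determine the curvature corrections uniquely (any combination of terms vanishing in the Ricci-flat case and on steady solitons, e.g.\ multiples of $R_{\hat i\hat j}+\nabla_{\hat i}\nabla_{\hat j}f-\alpha\gamma_{\alpha\beta}\nabla_{\hat i}\phi^\alpha\nabla_{\hat j}\phi^\beta$ contracted with $\mathcal A$, could be added or dropped), so they cannot replace the computation. What is missing is precisely the machinery the paper deploys at this point: an analogue of Lott's Lemma~1 converting $\mathcal A^{\hat i\hat j}(R_{\hat i\hat j}+\nabla_{\hat i}\nabla_{\hat j}f-\alpha\gamma_{\alpha\beta}\nabla_{\hat i}\phi^\alpha\nabla_{\hat j}\phi^\beta)$ modulo a tangential divergence into the Harnack-type expression, the evolution of $H$ along the flow (obtained in the paper from the variation of $\mathcal A$ plus Simons' identity, Codazzi and $\nabla_{\hat j}\nabla_0f=-\widehat\nabla_{\hat j}H+\mathcal A^{\hat k}{}_{\hat j}\widehat\nabla_{\hat k}f$), and the traced contracted Bianchi identity $\tfrac12\nabla_0R=\widehat\nabla_{\hat i}R^{\hat i0}-HR_{00}+\mathcal A^{\hat i\hat j}R_{\hat i\hat j}+\nabla_0R_{00}$, which is what manufactures the $-\tfrac12\nabla_0R-HR_{00}$ pair and the correct multiple of $\langle\widehat\nabla f,\widehat\nabla H\rangle$. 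Until you supply these identities (or equivalent ones adapted to your direct computation) and actually perform the cancellation, the stated boundary integrand is asserted rather than proved.
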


For the proof of Theorem~\ref{principal_theorem}, we first study the MCF in an extended Ricci flow background, and then ``translate'' the results for the context of the $(RH)_{\alpha}$ flow. We also obtain an extension of Hamilton’s differential Harnack expression from the mean curvature flow in Euclidean space, but now, to the more general context of MCF in the $(RH)_{\alpha}$ flow background, which must vanish on the gradient steady soliton to this flow, see Corollary~\ref{type-Harnack-background}. 

Our second main result is a Huisken monotonicity-type formula for the MCF in the $(RH)_{\alpha}$ flow background. 

\begin{theorem}\label{Huisken_monotonicity}
Let $(M,g)$ be an $m$-dimensional Riemannian manifold, and let $\Sigma$ be an $(m-1)$-dimensional closed smooth manifold. Consider $\mathscr F$ the MCF of $\Sigma$  in the $(\overline g,\overline \phi)-(RH)_{\alpha}$ flow background with potential function $\overline{f}$. Denote by $dA_{\overline{g}}$ the $(m-1)$-dimensional Riemannian measure on $\Sigma$ and set $\operatorname{Area}_{\overline{f}}(\Sigma_t):=\int_{\Sigma} e^{-\overline{f}}dA_{\overline{g}}$. Under these conditions, the function $\Phi(t)$ given by:
\begin{itemize}
\item[(i)] $\mathbb{R}\ni t\mapsto\operatorname{Area}_{\overline{f}}(\Sigma_t)$ in the steady case,
\item[(ii)] $(-\infty,T)\ni t\mapsto [4 \pi(T-t)]^{-(m - 1 ) / 2}\operatorname{Area}_{\overline{f}}(\Sigma_t)$ in the shrinking case, and
\item[(iii)] $(T,\infty)\ni t\mapsto [4\pi(t-T)]^{-(m - 1 ) / 2}\operatorname{Area}_{\overline{f}}(\Sigma_t)$ in the expanding case,
\end{itemize}
is non-increasing. Moreover, $\Phi(t)$ is constant if and only if $\mathscr F$ is a family of mean curvature solitons.   
\end{theorem}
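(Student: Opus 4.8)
The plan is to differentiate the weighted area $\operatorname{Area}_{\overline f}(\Sigma_t)=\int_{\Sigma}e^{-\overline f}\,dA_{\overline g}$ directly along the flow and to show that, once the soliton identities \eqref{mod_grad_Ricci_soliton}--\eqref{self-solution} are inserted, the result is minus the integral of a perfect square, the square being exactly the mean-curvature-soliton quantity $H+\langle\nabla_{\overline g}\overline f,e\rangle$. Concretely, I would first compute the two contributions to $\frac{d}{dt}\log\!\big(e^{-\overline f}\,dA_{\overline g}\big)$ along the MCF. The factor $e^{-\overline f}$ contributes $-\frac{d}{dt}\big[\overline f(x(p,t),t)\big]=-\big(\partial_t\overline f\big)(x(p,t))-d\overline f(\partial_t x)=-|\nabla_{\overline g}\overline f|_{\overline g}^{2}-H\langle\nabla_{\overline g}\overline f,e\rangle$, using the self-similar evolution \eqref{self-solution} for $\partial_t\overline f$ and the MCF equation $\partial_t x=He$. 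The area form $dA_{\overline g}$ changes for two independent reasons: the immersion moves by MCF, producing the classical first-variation term $-H^{2}$, and the ambient metric evolves by the $(RH)_\alpha$ flow, producing $\tfrac12\operatorname{tr}_{\Sigma_t}\!\big(\partial_t\overline g\big)=-\operatorname{tr}_{\Sigma_t}\!\big(\operatorname{Ric}_{\overline g}-\alpha\,\nabla\overline\phi\otimes\nabla\overline\phi\big)$.

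Next I would feed in the first equation of \eqref{mod_grad_Ricci_soliton}, which rewrites $\operatorname{Ric}_{\overline g}-\alpha\,\nabla\overline\phi\otimes\nabla\overline\phi=\frac{c}{2(T-t)}\overline g-\nabla^{2}_{\overline g}\overline f$, whence $\operatorname{tr}_{\Sigma_t}\!\big(\operatorname{Ric}_{\overline g}-\alpha\,\nabla\overline\phi\otimes\nabla\overline\phi\big)=\frac{c(m-1)}{2(T-t)}-\operatorname{tr}_{\Sigma_t}\nabla^{2}_{\overline g}\overline f$. Then I would split $\operatorname{tr}_{\Sigma_t}\nabla^{2}_{\overline g}\overline f=\Delta_{\Sigma_t}(\overline f|_{\Sigma_t})-H\langle\nabla_{\overline g}\overline f,e\rangle$ and $|\nabla_{\overline g}\overline f|_{\overline g}^{2}=|\nabla^{\Sigma_t}\overline f|^{2}+\langle\nabla_{\overline g}\overline f,e\rangle^{2}$, and integrate the intrinsic Laplacian by parts over the closed manifold $\Sigma$ against $e^{-\overline f}$, using $\int_{\Sigma}\Delta_{\Sigma_t}\overline f\,e^{-\overline f}\,dA_{\overline g}=\int_{\Sigma}|\nabla^{\Sigma_t}\overline f|^{2}e^{-\overline f}\,dA_{\overline g}$. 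All tangential-gradient contributions cancel and the cross term $-2H\langle\nabla_{\overline g}\overline f,e\rangle$ assembles with $-H^{2}$ and $-\langle\nabla_{\overline g}\overline f,e\rangle^{2}$ to leave
\[
\frac{d}{dt}\operatorname{Area}_{\overline f}(\Sigma_t)=-\int_{\Sigma}\big(H+\langle\nabla_{\overline g}\overline f,e\rangle\big)^{2}e^{-\overline f}\,dA_{\overline g}-\frac{c(m-1)}{2(T-t)}\operatorname{Area}_{\overline f}(\Sigma_t).
\]
In the steady case $c=0$ this already gives $\frac{d}{dt}\operatorname{Area}_{\overline f}(\Sigma_t)\le 0$. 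In the shrinking and expanding cases $c=1$, and a direct computation shows that the time derivative of the normalizing factor $[4\pi|T-t|]^{-(m-1)/2}$ exactly cancels the term $-\frac{c(m-1)}{2(T-t)}\operatorname{Area}_{\overline f}(\Sigma_t)$; hence in all three cases $\Phi'(t)=-[4\pi|T-t|]^{-(m-1)/2}\int_{\Sigma}\big(H+\langle\nabla_{\overline g}\overline f,e\rangle\big)^{2}e^{-\overline f}\,dA_{\overline g}\le 0$ (with prefactor $1$ in the steady case), which is the monotonicity.

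For the rigidity claim, $\Phi$ is constant on a subinterval precisely when $\Phi'\equiv 0$ there, which forces the nonnegative integrand $\big(H+\langle\nabla_{\overline g}\overline f,e\rangle\big)^{2}$ to vanish identically; since $e$ is the inward unit normal, this is exactly the equation $H+e\,\overline f=0$ on $\Sigma$ for every $t$, i.e.\ $\mathscr F$ is a family of mean curvature solitons, and the converse is immediate. I expect the main obstacle to be purely a matter of bookkeeping: keeping all sign and orientation conventions consistent (the inward normal $e$, the sign of the second fundamental form, the sign in the Gauss-type splitting $\Delta_{\overline g}=\Delta_{\Sigma_t}+\nabla^{2}(e,e)\mp H\partial_e$, and the use of \eqref{self-solution} rather than the conjugate heat equation for $\partial_t\overline f$) so that the perfect square $-(H+\langle\nabla_{\overline g}\overline f,e\rangle)^{2}$ emerges without stray terms. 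A secondary point worth checking carefully is that the two sources of variation of the induced metric — the motion of the immersion and the evolution of $\overline g$ — add correctly in $\partial_t\log dA_{\overline g}$.
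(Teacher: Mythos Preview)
Your proposal is correct and follows essentially the same route as the paper's own proof: the paper first isolates the evolution of $dA_{\overline g}$ (Lemma~\ref{Huisken:lem}, giving $\tfrac{d}{dt}(dA_{\overline g})=-(\overline R^i{}_i+H_{\overline g}^{2}-\alpha|\widehat\nabla_{\overline g}\overline\phi|_{\overline g}^{2})dA_{\overline g}$), then applies the chain rule to $\overline f$, inserts the traced soliton equation~\eqref{mod_grad_Ricci_soliton} and \eqref{self-solution}, uses the splittings $\overline\nabla_i\overline\nabla^i\overline f=\widehat\nabla^i\widehat\nabla_i\overline f-H_{\overline g}e_t\overline f$ and $|\nabla_{\overline g}\overline f|_{\overline g}^{2}=|\widehat\nabla_{\overline g}\overline f|_{\overline g}^{2}+(e_t\overline f)^{2}$, and integrates by parts via $\widehat\Delta_{\overline g}e^{-\overline f}=(|\widehat\nabla_{\overline g}\overline f|^{2}-\widehat\Delta_{\overline g}\overline f)e^{-\overline f}$ to arrive at the same perfect-square identity you wrote, with the normalizing factor handled identically in the shrinking/expanding cases. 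Your bookkeeping concerns about signs are well placed but all resolve consistently with the paper's conventions.
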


In Section~\ref{sect:grad:sol}, we address the construction of a family $\mathscr G$ of mean curvature solitons in the $(RH)_\alpha$ flow background, and we establish a characterization of such a family, as follows.
\begin{theorem}\label{NazasCharacterization}
If $\Sigma$ is an $f$-minimal hypersurface of a Riemannian manifold $(M,g)$, then $\mathscr G$ is a family of mean curvature solitons in the $(\overline g, \overline \phi)-(RH)_{\alpha}$ flow background on $M$. Moreover, any family $\mathscr F$ of mean curvature solitons in the $(\overline g, \overline \phi)-(RH)_{\alpha}$ flow background on $M$ is given by $\mathscr G$ up to reparametrization.
\end{theorem}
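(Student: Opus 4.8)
The plan is to first make explicit the family $\mathscr G$ whose existence is claimed, and then verify the soliton equation $H + e\,\overline f = 0$ directly from the soliton structure, before addressing uniqueness.

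\textbf{Construction.} Suppose $\Sigma$ is $f$-minimal in $(M,g)$, i.e.\ $H_\Sigma + \langle \nabla f, e\rangle = 0$ with respect to the inward unit normal $e$ of $\Sigma$ in $(M,g)$, where $f$ is the potential function of the gradient soliton $(\overline g(t),\overline\phi(t))$ restricted to $M$ (recall $\overline g(t)=\sigma(t)\psi_t^*g$, $\overline\phi(t)=\psi_t^*\phi$, $\overline f(t)=\psi_t^*f$). First I would define $\mathscr G=\{\Sigma_t\}$ by flowing $\Sigma$ along the diffeomorphisms $\psi_t$ generating the self-similar solution, i.e.\ set $\Sigma_t:=\psi_t(\Sigma)$ (equivalently, prescribe the immersions $x_t:=\psi_t\circ x_0$). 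The key step is then to check two things: (a) that this family actually solves MCF in the $(\overline g(t),\overline\phi(t))$ background — this is where the self-similar nature of the background enters, since the ambient metric is being dragged along by exactly the same $\psi_t$, so the pulled-back configuration is static up to the scaling $\sigma(t)$, and the normal component of $\partial_t x$ reduces to the mean curvature by a computation of how $H$ scales under $\overline g(t)=\sigma(t)\psi_t^*g$ together with the soliton identity~\eqref{mod_grad_Ricci_soliton}; and (b) that the soliton equation $H(p,t)+e(p,t)\overline f=0$ holds on $\Sigma_t$ for all $t$. For (b), since $\Sigma_t=\psi_t(\Sigma)$ and $\overline f(t)=\psi_t^*f$, pulling back by $\psi_t$ reduces the identity on $\Sigma_t$ to the identity $H_\Sigma + \langle\nabla f,e\rangle=0$ on $\Sigma$ — up to the scaling factor $\sigma(t)$, which affects $H$ and the gradient term homogeneously and hence cancels in the equation. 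So the $f$-minimality hypothesis is precisely the $t=0$ instance of the soliton equation, and self-similarity propagates it.

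\textbf{Characterization.} For the converse, let $\mathscr F=\{\widetilde\Sigma_t\}$ be any family of mean curvature solitons in the same background, with immersions $\widetilde x_t$. The plan is to reparametrize by the ambient diffeomorphisms: define $y_t:=\psi_t^{-1}\circ \widetilde x_t$, which are immersions into the fixed manifold $(M,g)$ (up to the scaling $\sigma(t)$). I would then show that $\partial_t y_t$ has vanishing normal component, so that $y_t(\Sigma)$ is, as a submanifold, independent of $t$ — equal to some fixed $\Sigma_0$. This follows by differentiating $y_t=\psi_t^{-1}\circ\widetilde x_t$ and using that $\widetilde x_t$ solves MCF in the $\psi_t$-dragged background: the MCF velocity of $\widetilde x_t$ and the velocity coming from $\psi_t^{-1}$ (the flow of $-\nabla_g f/\sigma(t)$) combine so that the normal part of the resulting velocity on $y_t(\Sigma)$ cancels exactly when the soliton equation $H+e\,\overline f=0$ holds — this is the same cancellation as in (a), read backwards. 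Tangential components only reparametrize within the fixed hypersurface. Finally, evaluating the soliton equation at any fixed time and pulling back by $\psi_t$ shows $\Sigma_0$ is $f$-minimal in $(M,g)$, and $\widetilde x_t = \psi_t\circ y_t$ exhibits $\mathscr F$ as $\mathscr G$ up to the reparametrizations $y_t$ within $\Sigma_0$.

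\textbf{Main obstacle.} The computational heart — and the step I expect to be most delicate — is step (a): carefully tracking how the mean curvature $H$ and the normal vector $e$ of $\Sigma_t$ transform under the combined pullback-and-scaling $\overline g(t)=\sigma(t)\psi_t^*g$, and verifying that the time-derivative $\partial_t x_t$ has normal component exactly $H e$. One must use the evolution $\partial_t\sigma$ dictated by the soliton (governed by $c$ and $T-t$), the identity~\eqref{self-solution} for $\overline f$, and the second equation of~\eqref{mod_grad_Ricci_soliton} to handle the $\overline\phi$ contribution to the background metric's evolution — the term $2\alpha\nabla\overline\phi\otimes\nabla\overline\phi$ must be shown not to disturb the self-similarity of the normal motion. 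A secondary subtlety is bookkeeping the inward-pointing convention for $e$ consistently between $\Sigma$ in $(M,g)$ and $\Sigma_t$ in the evolving background, since the sign in $H+e\,\overline f=0$ depends on it. Once the scaling calculus is set up cleanly, both the construction and the uniqueness follow from the same cancellation, so I would isolate that computation as a preliminary lemma before proving the theorem.
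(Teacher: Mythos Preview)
Your strategy is the paper's: transport $\Sigma$ by the soliton diffeomorphisms, use the scaling $\overline g(t)=\sigma(t)\psi_t^*g$ to see that $f$-minimality is equivalent to the soliton equation at every time, read off that the normal velocity equals $H$ (so a tangential reparametrization yields genuine MCF), and reverse the construction for uniqueness. Two points deserve correction.

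\textbf{Sign of the flow.} Setting $x_t:=\psi_t\circ x_0$ gives the wrong direction. Since $\psi_t$ is generated by $+\nabla_g f/\sigma(t)=\nabla_{\overline g}\overline f$, one gets $\overline g(\partial_t x_t,e)=e\,\overline f=-H$, not $+H$. The paper therefore defines $x(\cdot,t):=\psi\big(\cdot,-t+2(T-\kappa)\big)$ (and $\psi(\cdot,-t)$ in the steady case), so that $\partial_t x=-\nabla_g f/\sigma$ and the normal component is $+H$; then one reparametrizes away the tangential part exactly as you describe. Your uniqueness map $y_t=\psi_t^{-1}\circ\widetilde x_t$ inherits the same sign problem and should use the reversed flow as well.

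\textbf{The obstacle that isn't.} Your worry that the $2\alpha\nabla\overline\phi\otimes\nabla\overline\phi$ term might ``disturb the self-similarity of the normal motion'' is misplaced: neither the MCF equation $\partial_t x=He$ nor the soliton condition $H+e\,\overline f=0$ involves $\overline\phi$. Both are determined by $\overline g(t)=\sigma(t)\psi_t^*g$ and $\overline f=\psi_t^*f$ alone, and no appeal to~\eqref{mod_grad_Ricci_soliton} or~\eqref{self-solution} is needed. The entire computation is the elementary scaling $e_0=\sqrt{\sigma(t)}\,e(\cdot,t)$, hence $H_g=\sqrt{\sigma(t)}\,H(\cdot,t)$ and $e_0 f=\sqrt{\sigma(t)}\,e(\cdot,t)\overline f$, so $H_g+e_0 f=0$ on $\Sigma$ is equivalent to $H(\cdot,t)+e(\cdot,t)\overline f=0$ on $\Sigma_t$, together with $\nabla_g f=\sigma(t)\nabla_{\overline g}\overline f$ for the velocity. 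That is the ``preliminary lemma'' you are looking for, and it is two lines.
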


This paper is structured as follows. We begin in Section~\ref{Sec:preliminaries} with some definitions and basic concepts about Riemannian geometry and maps between Riemannian manifolds, by defining and commenting upon the concepts required to lay the groundwork for our proofs. In Section~\ref{Sec:VWEGHYA}, we obtain the variational formula for the functional $\mathcal F^{\alpha}_{\infty}$ under weighted measure-preserving, and characterize its critical points. In Section~\ref{MRFCHMHF} we work on the modified \((RH)_\alpha\) flow setting, as a tool to study MCF in the $(RH)_\alpha$ flow background, which is the main research object of this article. In Section~\ref{se:Ricci}, we give the proof of Theorems~\ref{principal_theorem} and \ref{Huisken_monotonicity}. In Section~\ref{Sec-EHDHE}, we provide an extension of Hamilton’s differential Harnack expression for mean curvature flow in Euclidean space to the more general context of mean curvature flow in the $(RH)_{\alpha}$ flow background. In Section~\ref{sect:grad:sol}, we give the proof of Theorem~\ref{NazasCharacterization}. In Section~\ref{section:how:to:construct}, we show how to construct a family of mean curvature solitons for the MCF in a self-similar solution to the $(RH)_\alpha$ flow by means of radial smooth functions on Euclidean space.

\section{\bf Preliminaries} \label{Sec:preliminaries}

Throughout this text, all manifolds are assumed to be orientable and connected. Consider a smooth map  $\phi: (M^m,g) \rightarrow (N^n,\gamma)$  between Riemannian manifolds $\left(M^{m}, g\right)$ and $\left(N^{n}, \gamma\right)$ with boundaries $\partial M$ and $\partial N$, respectively. We shall denote the local coordinates at $p\in M$ by $\{x^{i}\}$, the local coordinate basis by $\{\partial_{i}\}$ and the local dual coordinate basis by $\{dx^{i}\}$. Near $\partial M$, we take $x^0$ to be a local defining function for $\partial M$. We denote the local coordinates for $\partial M$ by $\{x^{\hat{i}}\}.$ We choose these coordinates near a point at $\partial M$ so that $\partial_0 \big|_{\partial M}$ coincides with the inward-pointing unit normal field $e_0$ along the boundary, moreover, we can assume that $\partial_i|_{\partial M}$ coincides with $\partial_{\hat{i}}$ along the boundary.  For $N$ we shall denote $\{y^\alpha\}$ the local coordinate at $\phi(p)$, the local coordinate basis by $\{\partial_{\alpha}\}$ and $\phi^{\alpha}:=y^{\alpha}\circ\phi.$ We shall use the convention that repeated Latin indices are summed over from $0$ to $m-1$ and repeated Greek indices are summed over from $0$ to $n-1$. In general, we are using the Einstein convention of summing over repeated indices. In dealing with flows, we shall usually simplify the notation by suppressing the parameter~$t.$ 

The metric on \( M \) is denoted by \(g=\langle , \rangle\) and \( \langle\partial_i,\partial_j\rangle=g_{ij}\), and its inverse is denoted by \( g^{ij} \) so that \( g_{ij} g^{jk} = \delta_i^k \). The forms $dM$ and $dA$ are the $m$-dimensional Riemannian measure of $(M,g),$ and the $(m-1)$-dimensional Riemannian measure of $(\partial M,g),$ respectively. We also use the classical notation $h^{ij} = g^{ik} g^{jl} h_{k l}$, for any $2$-tensor field $h$ on $M$.

We denote the Levi-Civita connection on \(TM\) by \(\nabla\) and on \(T\partial M\) by \(\widehat{\nabla}\). By simplicity, we also denote \(\nabla_i:=\nabla_{\partial_i}\),  $\nabla^i:=g^{ij}\nabla_j$ and $X^i=g^{ij}X_j$, where $X_j=\langle X,\partial_j\rangle.$

In what concerns $\partial M,$ we write $\mathcal{A}_{\hat{i}\hat{j}}:=\langle \nabla_{\partial_{\hat{i}}} \partial_{\hat{j}},e_0\rangle$ for its second fundamental form, and $H:=g^{\hat{i}\hat{j}}\mathcal{A}_{\hat{i}\hat{j}}$ for its mean curvature. Hence,
$$\mathcal{A}^{\hat{i}\hat{j}} = g^{\hat{i}\hat{k}} g^{\hat{j}\hat{l}} {\mathcal{A}}_{\hat{k}\hat{l}} \quad\text{and}\quad \mathcal{A}^{\hat{k}}\!_{\hat{i}}=g^{\hat{k}\hat{l}}\mathcal{A}_{\hat{l}\hat{i}}.$$

For all $X,Y \in \Gamma(TM)$ and $\omega \in \Gamma(T^{*}M)$, we have
$$\nabla^{T^{*}M}_{X}\omega(Y)=X\big(\omega(Y)\big)-\omega\big(\nabla_X Y\big).$$

The smooth map $\phi$ induces the fiber bundle $\phi^{*}TN$ over $M$ as follows 
$$\phi^{*}TN=\left\{(p,u); p\in M, u \in T_{\phi(p)}N\right\}=\bigcup_{ p \in M}\left\{p\right\}\times T_{\phi(p)}N.$$ 

The Levi-Civita covariant derivative $\nabla^{TN}$ of the metric $\gamma$ on $N$ induces the following covariant derivative on $\phi^{*}TN$,
$$\nabla^{\phi^{*}TN}_{X}U:=\nabla^{TN}_{\phi_{*}X}U,$$
for all $X \in \Gamma(TM)$ and $U \in \Gamma(TN)$.

The Riemannian curvature tensor is defined as
\[
\operatorname{Rm}(X, Y) Z = \nabla_Y \nabla_X Z - \nabla_X \nabla_Y Z + \nabla_{[X, Y]} Z = R^l_{
ijk}Y^jX^iZ^k\partial_l.
\]
where
\begin{align*}
R^l\!_{ijk}\partial_l&=R(\partial_i,\partial_j)\partial_k= \nabla_j \nabla_i \partial_k -
\nabla_i \nabla_j \partial_k,
\\
R^l_{ijk} &=  \partial_j \Gamma^l_{ik}  -
\partial_i \Gamma^l_{jk} + \Gamma^m_{ik} \Gamma^l_{jm} - \Gamma^m_{jk} \Gamma^l_{im} ,
\\
\Gamma^k_{ij} &= \frac{1}{2} g^{kl} (\partial_i g_{jl} + \partial_j g_{il} - \partial_l g_{ij}).    
\end{align*}
When lowering the index to the fourth position, we obtain
\[
R_{ijkl} = g_{ml} R^m_{ijk},
\]
so that $R^s\!_{ijk} = g^{ls} R_{ ijkl},$ where $R_{ijkl} = \langle R(\partial_i,\partial_j)\partial_k, \partial_l\rangle$. The Ricci tensor \( R_{ij} \) is defined as \( R_{ij} = g^{kl} R_{ikjl} \), and the scalar curvature is its trace \( R = g^{ik} R_{ik} = g^{ik} g^{jl} R_{ijkl} \). 

Thus, for a vector field \(X\), one has
\[
[\nabla_i, \nabla_j] X^k = \nabla_i \nabla_j X^k - \nabla_j \nabla_i X^k = -R^k_{ijm} X^m = g^{kl} R_{ijlm} X^m.
\]
Taking the trace in the second Bianchi identity
\[
\nabla_i R_{jklm} + \nabla_j R_{kilm} + \nabla_k R_{ijlm} = 0
\]
we obtain
\[
g^{im}\nabla_i R_{jklm}= -g^{im} \nabla_j R_{kilm} -g^{im} \nabla_k R_{ijlm} =  -\nabla_j R_{kl} + \nabla_k R_{jl}.
\]
We now trace with \(g^{jl}\) to get
\[
g^{im}\nabla_i R_{km}= -g^{jl}\nabla_j R_{kl} + \nabla_k R,
\]
which immediately implies
\begin{equation*}
\nabla^l R_{kl}:=g^{jl}\nabla_j R_{kl} = \frac{1}{2} \nabla_k R.    
\end{equation*}
Now, we compute
\begin{equation*}
\nabla\partial_i(\partial_s):=\nabla_{\partial_s}\partial_i:=\Gamma_{si}^k\partial_k=\Gamma_{ji}^k dx^j(\partial_s)\partial_k=:\Gamma_{ij}^k dx^j\otimes\partial_k(\partial_s)  
\end{equation*}
and
\begin{align*}
\nabla dx^{i}(\partial_r,\partial_s):=\nabla_{\partial_r}^{T^{*}M}\big(dx^i(\partial_s)\big)-dx^i\big(\nabla_{\partial_r}\partial_s\big)
=-\Gamma_{jk}^{i} dx^{j}(\partial_r)dx^{k}(\partial_s).
\end{align*}
In short,
\begin{equation}\label{covariant_derivative1}
\nabla\partial_i=\Gamma_{ij}^k dx^j\otimes\partial_k \quad \mbox{and} \quad \nabla dx^i = - \Gamma_{jk}^i dx^j\otimes dx^k.
\end{equation} 
Moreover, by straightforward computation
\begin{align*}
\left(\nabla\partial_{\lambda}|_{\phi}\right)(\partial_i):=\nabla_{\partial_i}^{\phi^{*}TN}\partial_{\lambda}|_{\phi}&:=\nabla_{\phi_{*}\partial_i}\partial_{\lambda}|_{\phi}=\Gamma_{\alpha\lambda}^{\beta}\circ\phi)\nabla_i\phi^{\alpha}\partial_{\beta}|_{\phi}
\end{align*}
and 
\begin{align*}
\nabla dy^{\lambda}(\partial_{i},\partial_{\alpha}|_{\phi}):= \left(\nabla^{T^*M}_{\partial_i}dy^{\lambda}\right)(\partial_{\alpha}|_{\phi})&=\nabla_{\partial_i}dy^{\lambda}(\partial_{\alpha}|_{\phi})-dy^{\lambda}(\nabla_{\partial_i}\partial_{\alpha}|_{\phi})\\
&=-dy^{\lambda}\left( (\Gamma_{\alpha\beta}^{\theta}\circ\phi)\nabla_{i}\phi^{\beta}\partial_{\theta}|_{\phi}\right)\\
&=-(\Gamma_{\alpha\beta}^{\lambda}\circ\phi)\nabla_{i}\phi^{\beta}.
\end{align*}
In short,
\begin{equation}\label{covariant_derivative2}
\nabla\partial_{\lambda}|_{\phi}\!=\!(\Gamma_{\alpha\lambda}^{\beta}\circ\phi)\nabla_i\phi^{\alpha}dx^{i}\otimes\partial_{\beta}|_{\phi}\;\hbox{and}\;
\nabla dy^{\lambda} \!=\! -(\Gamma_{\alpha\beta}^{\lambda}\circ\phi)\nabla_{i}\phi^{\beta}dx^{i}\otimes dy^{\alpha}.
\end{equation}

For a smooth function $f:M\rightarrow\mathbb{R}$, we write its gradient as $\nabla f = \nabla^i f \partial_i$ so that
$\nabla^i f = g^{ij} \nabla_j f$ and $|\nabla f|^2 
= g^{ij} \nabla_i f \nabla_j f$, where $\nabla_j f 
= \langle \nabla f, \partial_j\rangle$, and the Hessian of $f$ is given by $\nabla_k \nabla_l f$.
Moreover, we have the following expressions $\nabla^i \phi^{\lambda}= g^{ij}\nabla_j \phi^{\lambda} = g^{ij}\langle\nabla \phi^{\lambda}, \partial_j\rangle$ and  $\nabla^i \nabla^j f = g^{ik}g^{jl} \nabla_k \nabla_l f$.

We recall that the derivative $\nabla\phi$ maps linearly sections of \( TM \) to sections of \( TN \) along \( \phi \), i.e., in terms of the bundle \( \phi^* TN \), we can interpret \( \nabla\phi \) as a section of the vector bundle of homomorphisms $\operatorname{Hom}(TM; \phi^* TN).$  Furthermore, since this latter bundle is isomorphic to the induced bundle \( T^*M \otimes \phi^* TN \), we can introduce a connection $ \nabla$ on $ \Gamma(T^*M \otimes \phi^* TN)$ to obtain the second derivative \( \nabla\nabla \phi \) as the derivative of $ \nabla \phi$ concerning the connection on $\Gamma(T^*M \otimes \phi^* TN)$, thus, it is a section of the bundle $T^*M\otimes T^*M\otimes \phi^* TN.$ The \emph{tension field} $ \tau_{g,\gamma} \phi$ (or Laplacian $\Delta \phi$) is the trace of \( \nabla\nabla \phi \) with respect to the inner product on \( TM \). This defines \( \tau_{g,\gamma}\phi \) as a section of the bundle \( \phi^* TN \). Precisely,
\begin{align*}
\nabla\phi:TM&\longrightarrow \phi^{*}TN\\ 
X& \longmapsto d\phi(X),
\end{align*}
where
\begin{align*}
d\phi(\partial_j)=d(y^{\lambda}\circ\phi)(\partial_j)\partial_{\lambda}|_{\phi}=d\phi^{\lambda}(\partial_j)\partial_{\lambda}|_{\phi}=\langle\nabla\phi^{\lambda},\partial_j\rangle_{_M}\partial_{\lambda}|_{\phi}=\nabla_j\phi^{\lambda}\partial_{\lambda}|_{\phi}
\end{align*}
and
\begin{align*}
d\phi(X)=\nabla_j\phi^{\lambda}dx^{j}(X)\partial_{\lambda}|_{\phi}.
\end{align*}
Therefore,
\begin{equation}\label{Oldenburg1}
\nabla\phi=\partial_j\phi^{\lambda}dx^{j}\otimes\partial_{\lambda}|_{\phi}=\nabla_j\phi^{\lambda}dx^{j}\otimes\partial_{\lambda}|_{\phi}.
\end{equation}
Taking $X=\nabla f$ and writing $\nabla f = g^{ik}\nabla_i f\partial_k,$ we get
\begin{align*}
\langle\nabla\phi,\nabla f\rangle:=\nabla\phi(\nabla f)=g^{ik}\nabla_j\phi^{\lambda}\nabla_i f dx^{j}(\partial_k)\partial_{\lambda}|_{\phi}=\langle \nabla f , \nabla \phi^{\lambda}\rangle\partial_{\lambda}|_{\phi}.
\end{align*}
Taking $X=e_0$, we have
\begin{align*}
\nabla_0\phi:=\nabla\phi(e_0)=\nabla_j\phi^{\lambda} dx^{j}(e_0)\partial_{\lambda}|_{\phi}=e_0\phi^{\lambda}\partial_{\lambda}|_{\phi}.
\end{align*}
By using \eqref{covariant_derivative1} and \eqref{covariant_derivative2}, one has
\begin{eqnarray}\label{coordinate_of_tensionfield}
\big(\nabla_{\partial_i}\nabla\phi\big)(\partial_j,dy^\lambda)&=&\partial_i\big(\nabla\phi(\partial_{j},dy^{\lambda})\big)-\nabla\phi(\nabla_{\partial_i}\partial_{j},dy^{\lambda})-\nabla\phi(\partial_{j},\nabla_{\partial_i}dy^{\lambda})\nonumber\\
&=& \partial_{i}\partial_{l}\phi^{\theta}dx^{l}(\partial_{j})\partial_{\theta}(dy^\lambda)-\Gamma_{ij}^{k}\nabla_{l}\phi^{\theta}dx^{l}(\partial_{k})\partial_{\theta}(dy^\lambda)\nonumber\\
&&+\big(\Gamma_{\alpha\beta}^{\lambda}\circ\phi\big)\nabla_{i}\phi^{\beta}\nabla_{l}\phi^{\theta}dx^{l}(\partial_{j})\partial_{\theta}(dy^\alpha)\nonumber\\
&=& \partial_i\partial_j\phi^{\lambda}-\Gamma^{k}_{ij}\nabla_k\phi^{\lambda}+(\Gamma^{\lambda}_{\alpha\beta}\circ\phi)\nabla_i\phi^{\alpha}\nabla_j\phi^{\beta},
\end{eqnarray}
whence
\begin{equation*}
\nabla\nabla\phi=
(\nabla_{\partial_i}\nabla\phi)(\partial_j,dy^\lambda)dx^{i}\otimes dx^{j}\otimes \partial_{\lambda}|_{\phi}.
\end{equation*}
The tension field of $\phi$ with respect to the metrics $g$ and $\gamma$ is given as the trace of \eqref{coordinate_of_tensionfield}, and then
\begin{align}\label{coordinate_of_tensionfield-Aux}
\nonumber \tau_{g,\gamma}\phi &= \operatorname{tr}_{g}( \nabla\nabla\phi)\\
&= g^{ij}\Big(\partial_i\partial_j\phi^{\lambda}-\Gamma^{k}_{ij}\nabla_k\phi^{\lambda}+(\Gamma^{\lambda}_{\alpha\beta}\circ\phi)\nabla_i\phi^{\alpha}\nabla_j\phi^{\beta}\Big)\partial_{\lambda}|_{\phi}\\
\nonumber&=\Big( \Delta_{g}\phi^{\lambda}+(\Gamma^{\lambda}_{\alpha\beta}\circ\phi)g(\nabla\phi^{\alpha},\nabla\phi^{\beta})\Big)\partial_{\lambda}|_{\phi}. 
\end{align}
Notice that $\tau_{g,\gamma} \phi$ is a generalization of the Laplacian on $C^{\infty}(M)$. By definition, the map $\phi$ is harmonic if $\tau_{g,\gamma} \phi=0$. Even though definition~\eqref{coordinate_of_tensionfield-Aux} is well known, we need something a little more general, as we shall define now.

For $\omega:TM\longrightarrow \phi^{*}TN$, we write $\omega=\omega_i^{\lambda}dx^i\otimes\partial_\lambda$ to get
\begin{equation*}
\operatorname{div}_{g,\gamma}\omega \!:=\! g^{ij}\big(\nabla_{\partial_i}\omega\big)(\partial_j,dy^\lambda)\partial_{\lambda}\!|_{\phi}\!=\! g^{ij}\Big(\partial_i\omega_j^{\lambda}-\Gamma^{k}_{ij}\omega_k^{\lambda}+(\Gamma^{\lambda}_{\alpha\beta}\circ\phi)\nabla_i\phi^\alpha\omega_j^{\beta}\Big)\partial_{\lambda}|_{\phi}.
\end{equation*}

We shall use the inner product on the bundle $T^{*}M\otimes\phi^{*}TN$ induced by $g$ and $\gamma$ as follows
\begin{equation}\label{inner_product_on_induced_bundle}    \langle\nabla\phi,\nabla\phi\rangle_{_{T^{*}M\otimes\phi^{*}TN}}:=g^{ij}\phi^*\gamma_{\alpha\beta}\nabla_i\phi^{\alpha}\nabla_j\phi^{\beta}.
\end{equation}
Since there is no danger of confusion, we  shall write
\begin{equation*}
T(\nabla\phi, \nabla\phi):= T^{ij}\phi^*\gamma_{\alpha\beta}\nabla_i\phi^{\alpha}\nabla_j\phi^{\beta}
\end{equation*}
for any $2$-tensor $T$ on $M$, and the same notation $\langle\cdot, \cdot\rangle$ for the inner products on $M$, $N$ and $T^{*}M\otimes\phi^{*}TN$. Besides, for the sake of simplicity, we write
\begin{equation*}
\nabla\phi\otimes\nabla\phi(\partial_i,\partial_j):=\phi^*\gamma_{\alpha\beta}\nabla_i\phi^{\alpha}\nabla_j\phi^{\beta}.
\end{equation*}
As in \eqref{inner_product_on_induced_bundle}, we have
\begin{equation*}
\langle S, T \rangle = g^{ik} g^{jl} \phi^*\gamma_{\alpha \beta} S^\alpha_{ij} T^\beta_{kl}
\end{equation*}
for any \(S,T \in T^*M \otimes T^*M \otimes \phi^*TN\).

\section{\bf Evolution of the functional associated with the Ricci flow coupled with the harmonic map heat flow}\label{Sec:VWEGHYA}

In this section $g(t)$ stands for a one-parameter family of Riemannian metrics on an $m$-dimensional compact smooth manifold $M$ with boundary $\partial M$, and $\phi(t)$ a one-parameter family of smooth maps from $M$ to an $n$-dimensional Riemannian manifold $(N,\gamma)$, with $g(0)=g$ and $\phi(0)=\phi$. Moreover, consider the product 
$$\mathscr P(M,N):=\operatorname{met}(M)\times C^\infty (M)\times C^\infty(M,N),$$ 
where ${\rm met}(M)$ denotes the set of all Riemannian metrics on $M$.

We shall adopt the following notation. Given $(g,f,\phi)\in \mathscr P(M)$, take variations $(g_{ij}+th_{ij}, f+t\ell,\phi+t\vartheta),$ with $h_{ij} \in \Gamma(\operatorname{Sym}^2(T^{*}M)), \ell \in C^{\infty}(M)$ and $\vartheta \in C^{\infty}(M,N)$ with $\vartheta(x) \in T_{\phi(x)}N$. We denote by $\delta$ the derivative $\frac{d}{dt}|_{t=0}$, and then $\delta g=h$, $\delta f=\ell$, and $\delta \phi=\vartheta$. Moreover, we are using the weighted volume element $d\mu=e^{-f}dM,$ which is \emph{weighted measure-preserving} if and only if $\frac{\operatorname{tr}_{g}h}{2}-\ell=0$ on $M$, since $\delta(e^{-f} dM)=(\frac{\operatorname{tr}_{g}h}{2}-\ell)e^{-f}dM.$

For the sake of simplicity, we are writing $\gamma_{\alpha \beta}$ on $M$ instead of $\phi^{*} \gamma_{\alpha \beta}$.
With these notations in mind, we compute the variation of $\mathcal F^{\alpha}_\infty$ as follows.
\begin{proposition}\label{var:MWGHY}
Under weighted measure-preserving, we have
\begin{align*}
\delta \mathcal F^{\alpha}_\infty \!=&\! \int_{M}\!\! \Big( \langle -h,\operatorname{Ric}+\nabla^{2} f-\alpha \nabla \phi\otimes \nabla \phi\rangle +2\alpha\langle\tau_{g,\gamma}\phi-\left\langle\nabla f, \nabla \phi \right\rangle, \vartheta\rangle\Big)e^{-f}dM\\
&- \int_{\partial M} \big( h^{\hat{i}\hat{j}}\mathcal{A}_{\hat{i}\hat{j}} + h^{00}(H + e_0 f)\big) e^{-f} dA + 2\alpha \int_{\partial M}\langle\nabla_{0}\phi, \vartheta\rangle e^{-f} dA.
\end{align*}
\end{proposition}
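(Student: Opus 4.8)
The plan is to compute the variation of $\mathcal F^{\alpha}_\infty$ term by term, splitting it into a bulk piece (the $R_\infty - \alpha|\nabla\phi|^2$ integral against $e^{-f}dM$) and a boundary piece (the $2H_\infty e^{-f}dA$ integral). For the bulk piece I would use the standard first-variation formulas on a manifold with boundary: $\delta R_g = -\Delta_g(\operatorname{tr}_g h) + \operatorname{div}_g\operatorname{div}_g h - \langle h, \operatorname{Ric}\rangle$, together with $\delta(\Delta_g f) = -\langle h, \nabla^2 f\rangle - \langle \operatorname{div}_g h - \tfrac12\nabla(\operatorname{tr}_g h), \nabla f\rangle + \operatorname{tr}_g(\nabla^2(\delta f))$ and $\delta|\nabla f|^2 = -h(\nabla f,\nabla f) + 2\langle\nabla f,\nabla(\delta f)\rangle$, plus $\delta(|\nabla\phi|^2) = -h^{ij}\nabla_i\phi^\alpha\nabla_j\phi^\beta\gamma_{\alpha\beta} + 2\langle\nabla\phi,\nabla\vartheta\rangle$ where the last inner product is on $T^*M\otimes\phi^*TN$. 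One then integrates by parts, picking up boundary terms; the weighted-measure-preserving constraint $\tfrac12\operatorname{tr}_g h = \ell$ is used precisely to cancel the terms in $\operatorname{tr}_g h$ and $\delta f$ that are not already in the desired form, so that the bulk integrand collapses to $\langle -h, \operatorname{Ric}+\nabla^2 f - \alpha\nabla\phi\otimes\nabla\phi\rangle + 2\alpha\langle\tau_{g,\gamma}\phi - \langle\nabla f,\nabla\phi\rangle,\vartheta\rangle$.

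\smallskip
\emph{Handling the $\phi$-variation.} The term $2\alpha\int_M \langle\nabla\phi,\nabla\vartheta\rangle e^{-f}dM$ must be integrated by parts against the weighted divergence. Using the definition of $\operatorname{div}_{g,\gamma}$ given in Section~\ref{Sec:preliminaries} and the identity $\operatorname{div}(e^{-f}\nabla\phi\cdot\vartheta) = e^{-f}\langle\tau_{g,\gamma}\phi - \langle\nabla f,\nabla\phi\rangle,\vartheta\rangle + e^{-f}\langle\nabla\phi,\nabla\vartheta\rangle$, the divergence theorem produces the bulk term $-2\alpha\int_M\langle\tau_{g,\gamma}\phi - \langle\nabla f,\nabla\phi\rangle,\vartheta\rangle e^{-f}dM$ — wait, sign: it produces $+2\alpha\int_M\langle\tau_{g,\gamma}\phi-\langle\nabla f,\nabla\phi\rangle,\vartheta\rangle e^{-f}dM$ after moving the term across — together with the boundary term $2\alpha\int_{\partial M}\langle\nabla_0\phi,\vartheta\rangle e^{-f}dA$, where $\nabla_0\phi = \nabla\phi(e_0)$ appears because $e_0$ is the \emph{inward} unit normal. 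This accounts exactly for the last boundary integral in the statement.

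\smallskip
\emph{The boundary integral from $2\int_{\partial M}H_\infty e^{-f}dA$.} This is the delicate part. I would need the first variation of the mean curvature $H_g$ of $\partial M$ and of $e_0 f$ under a bulk metric variation $h$, keeping the hypersurface $\partial M$ fixed as a set. The relevant formulas are $\delta H_g = -\tfrac12 \nabla_0(\operatorname{tr}_g h) - \tfrac12 h^{00}H - \tfrac12\langle \widehat\nabla, h(e_0,\cdot)^{\hat i}\rangle$-type expressions (the precise form of $\delta H$ on a fixed hypersurface), and $\delta(e_0 f) = -\tfrac12 h^{00} e_0 f + e_0(\delta f)$ modulo tangential terms. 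The boundary terms coming from integrating by parts in the bulk (the $\operatorname{div}_g\operatorname{div}_g h$ and $\Delta_g(\operatorname{tr}_g h)$ contributions, and the $\langle\operatorname{div}_g h,\nabla f\rangle$ contributions) must be combined with $2\,\delta\big(H_g + e_0 f\big)e^{-f}$ and with the $e_0 f$ terms hidden in $\delta R_\infty$'s integration by parts; after a careful bookkeeping using $\operatorname{tr}_g h = 2\ell = 2\delta f$ on $M$, the boundary integrand must collapse to $-\big(h^{\hat i\hat j}\mathcal A_{\hat i\hat j} + h^{00}(H+e_0 f)\big)e^{-f}$. \textbf{The main obstacle} is exactly this cancellation: one must decompose $h$ into its $h^{00}$, $h^{0\hat i}$, and $h^{\hat i\hat j}$ parts near $\partial M$, track every tangential-derivative boundary term produced by the two rounds of integration by parts, and verify that the $h^{0\hat i}$ and tangential-derivative pieces integrate to zero over the closed manifold $\partial M$ by a second application of the divergence theorem on $\partial M$, leaving only the stated normal and tangential-tangential pieces. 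This is where the choice of $x^0$ as a defining function and the coordinate conventions fixed in Section~\ref{Sec:preliminaries} (so that $\partial_0|_{\partial M} = e_0$ and $\partial_i|_{\partial M} = \partial_{\hat i}$) do the essential simplifying work.
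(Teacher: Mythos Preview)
Your plan is sound and the $\phi$-variation part of your argument matches the paper's exactly: integrate $\langle\nabla\phi,\nabla\vartheta\rangle e^{-f}$ by parts on the bundle $T^*M\otimes\phi^*TN$ to produce the bulk term $\langle\tau_{g,\gamma}\phi-\langle\nabla f,\nabla\phi\rangle,\vartheta\rangle e^{-f}$ and the boundary term $\langle\nabla_0\phi,\vartheta\rangle e^{-f}$.

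Where you diverge from the paper is in the handling of the remaining piece. The paper does \emph{not} compute the variation of $R_\infty$ and of $H_\infty$ from scratch and then chase the boundary cancellations you flag as ``the main obstacle.'' Instead it writes
\[
\mathcal F^{\alpha}_\infty(g,f,\phi)=I_\infty(g,f)-\alpha\, E(g,f,\phi),\qquad E(g,f,\phi):=\int_M|\nabla\phi|^2 e^{-f}\,dM,
\]
and invokes Lott~\cite[Prop.~2]{John_Lott} as a black box for $\delta I_\infty$, which already yields
\[
\delta I_\infty=-\!\int_M h^{ij}\big(R_{ij}+\nabla_i\nabla_j f\big)e^{-f}dM-\!\int_{\partial M}\!\big(h^{\hat i\hat j}\mathcal A_{\hat i\hat j}+h^{00}(H+e_0 f)\big)e^{-f}dA
\]
under the weighted measure-preserving constraint. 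All of the delicate bookkeeping you outline---tracking the $h^{0\hat i}$ pieces, the two rounds of integration by parts from $\operatorname{div}\operatorname{div} h$ and $\Delta(\operatorname{tr}_g h)$, the variation of $H$ and of $e_0 f$, and the tangential divergences on $\partial M$---is exactly the content of Lott's computation, which the paper simply cites. Your route is therefore correct but strictly longer: it reproves Lott's formula en route, gaining self-containedness at the cost of the substantial boundary calculation you yourself identify as the hard step. If you want the short proof, split off $I_\infty$ and quote Lott; if you want the self-contained one, you must actually carry out the $h^{00}$, $h^{0\hat i}$, $h^{\hat i\hat j}$ decomposition and verify the cancellations rather than only describe them.
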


\begin{proof}
First, note that by~\eqref{MGHYF} we can write
\begin{align*}
 \mathcal F^{\alpha}_\infty (g, f, \phi) = I_\infty(g, f) - \alpha  E(g, f, \phi),
\end{align*}
where $E(g, f, \phi) :=\int_{M} |\nabla \phi|^2 e^{-f}dM$. Thus, we can use Proposition~2 in \cite{John_Lott}, which guarantees that
\begin{align*}
\delta I_{\infty}=-\!\!\int_M \!\!h^{ij}\left(R_{ij}+\nabla_i \nabla_j f \right) e^{-f} dM \!-\!\int_{\partial M}\!\!\left(h^{\hat{i}\hat{j}} \mathcal{A}_{\hat{i} \hat{j}}+h^{00}\left(H+ e_0 f\right)\right) e^{-f} dA.
\end{align*}
Hence, it is enough to prove that
\begin{align*}
\delta E=&\int_{M}\Big(-h^{ij} \gamma_{\alpha\beta}\nabla_{i} \phi^{\alpha} \nabla_{j} \phi^{\beta} -2\langle\tau_{g,\gamma}\phi-\left\langle\nabla f, \nabla \phi \right\rangle, \vartheta\rangle \Big)e^{-f}dM\\
&- 2\int_{\partial M}\langle\nabla_0\phi, \vartheta\rangle e^{-f}dA.
\end{align*}
Indeed, notice that
\begin{align*}
\delta E(h, \ell, \vartheta)= \int_{M} \Big(\delta\big(|\nabla \phi|^2\big) + |\nabla \phi|^2\Big(\frac{\operatorname{tr}_{g}h}{2} - \ell\Big)\Big) e^{-f} dM
\end{align*}
and
\begin{align*}
\delta\big(|\nabla \phi|^2\big) = -h^{i j}\gamma_{\alpha\beta}\nabla_{i} \phi^{\alpha} \nabla_{j} \phi^{\beta}  +2 g^{i j}\gamma_{\alpha\beta} \nabla_{i} \vartheta^{\alpha} \nabla_{j} \phi^{\beta},
\end{align*}
where $\vartheta^{\alpha}=\vartheta\circ y_\alpha$. So, under weighted measure-preserving, we have
\begin{align*}
\delta E= \int_{M}\Big(-h^{i j}\gamma_{\alpha\beta}\nabla_{i} \phi^{\alpha} \nabla_{j} \phi^{\beta}e^{-f} +2 g^{i j}\gamma_{\alpha\beta} \nabla_{i} \vartheta^{\alpha} \nabla_{j} \phi^{\beta} e^{-f}\Big) dM,
\end{align*}
which is equivalent to
\begin{align}\label{EqAuxDiv}
\nonumber\delta E =&\int_{M}\Big(-h^{i j}\gamma_{\alpha\beta}\nabla_{i} \phi^{\alpha} \nabla_{j} \phi^{\beta}e^{-f} +2g^{ij}\partial_i \Big(\gamma_{\alpha\beta}\vartheta^{\alpha}e^{-f}\nabla_{j}\phi^{\beta}\Big)\\ 
&-2g^{ij}\partial_{i}\gamma_{\alpha\beta}\vartheta^{\alpha}\nabla_{j}\phi^{\beta}e^{-f}-2g^{ij}\gamma_{\alpha\beta}\vartheta^{\alpha}\partial_i\partial_j\phi^\beta e^{-f}\\
\nonumber&+2g^{ij}\gamma_{\alpha\beta}\vartheta^{\alpha}\nabla_{i} f\nabla_{j}\phi^{\beta}e^{-f}\Big)dM.
\end{align}
Now, note that
\begin{align*}
&g^{ij}\nabla_i \Big(\vartheta^{\alpha}e^{-f} \gamma\otimes\nabla \phi\Big)\big(\partial_\alpha, \partial_\beta, \partial_j, dy^{\beta}\big) \\
=& g^{ij}\partial_i \Big(\vartheta^{\alpha}\!e^{-f}\!\gamma\otimes\nabla\phi\big( \partial_\alpha, \partial_\beta,\partial_j, dy^{\beta}\big)\Big) \!\!-\!\!g^{ij}\vartheta^{\alpha}\! e^{-f}\!\gamma\otimes\nabla \phi\big(\nabla_{\phi_*\partial_i}\partial_\alpha, \partial_\beta,\partial_j, dy^{\beta}\big)\\
-&g^{ij}\vartheta^{\alpha}\ e^{-f}\!\gamma\otimes\nabla \phi\big(\partial_\alpha, \nabla_{\phi_*\partial_i}\partial_\beta,\partial_j, dy^{\beta}\big)\!\!-\!\!g^{ij}\vartheta^{\alpha} e^{-f}\gamma\otimes\nabla \phi\big(\partial_\alpha, \partial_\beta,\nabla_i\partial_j, dy^{\beta}\big)\\
-&g^{ij}\vartheta^\alpha e^{-f} \gamma\otimes\nabla\phi\big(\partial_\alpha, \partial_\beta,\partial_j, \nabla_{\phi_*\partial_i} dy^{\beta}\big).
\end{align*}
Using $\partial_{i}\gamma_{\alpha\beta} = \gamma(\nabla_{\phi_*\partial_i}\partial_\alpha, \partial_\beta)+\gamma(\partial_\alpha, \nabla_{\phi_*\partial_i}\partial_\beta)$ into the previous equation, we obtain from~\eqref{covariant_derivative2}, \eqref{Oldenburg1}, \eqref{EqAuxDiv} and Stokes' theorem,
\begin{align*}
\delta E =&\int_{M} \Big( - h^{ij}\gamma_{\alpha\beta}\nabla_{i} \phi^{\alpha} \nabla_{j} \phi^{\beta} +2g^{ij}\gamma_{\alpha\beta}\vartheta^{\alpha}\Gamma_{ij}^k\nabla_k\phi^{\beta}\\
&- 2g^{ij}\gamma_{\alpha\beta}\vartheta^{\alpha}(\Gamma_{\xi\theta}^\beta \circ \phi)\nabla_i\phi^{\xi}\nabla_j\phi^{\theta}\\ 
&-2g^{ij}\gamma_{\alpha\beta}\vartheta^{\alpha}\partial_i\partial_j\phi^\beta +2g^{ij}\gamma_{\alpha\beta}\vartheta^{\alpha}\nabla_{i} f\nabla_{j}\phi^{\beta}\Big)e^{-f}dM\\
&- 2\int_{\partial M} \gamma_{\alpha\beta}\vartheta^{\alpha} \nabla_0 \phi^{\beta} e^{-f} dA.
\end{align*}
So, by \eqref{coordinate_of_tensionfield-Aux}, it is immediate that
\begin{align*}
\delta E =&\int_{M} \Big( - h^{ij}\gamma_{\alpha\beta}\nabla_{i} \phi^{\alpha} \nabla_{j} \phi^{\beta} -2\langle\vartheta,\tau_{g,\gamma}\phi\rangle+ 2\big\langle\vartheta, \langle\nabla f, \nabla \phi\rangle\big\rangle \Big)e^{-f} dM\\
&- 2\int_{\partial M} \left\langle \vartheta,\nabla_0 \phi \right\rangle e^{-f} dA,
\end{align*}
which is enough to conclude the result of the proposition.
\end{proof}
\begin{remark} \label{rem:recover1}
By considering $M$ compact without boundary in Proposition~\ref{var:MWGHY}, we recover the results by Müller~\cite[Eq.~(3.1)]{muller2012ricci}, for $\phi\in C^\infty(M, N)$ with $M$ and $N$ being closed Riemannian manifolds and $N$ isometrically embedded into
Euclidean space $\mathbb R^d$; and by List~\cite{Bernhard_List}, for $\phi\in C^\infty(M)$. In the compact case with boundary, we also recover the results by Gomes and Hudson~\cite[Prop.~1]{Gomes_Hudson}, for $\phi\in C^\infty(M)$; and by Lott~\cite{John_Lott}, for $\phi$ constant.
\end{remark}
The next two corollaries provide the critical points of $\mathcal F^{\alpha}_\infty$ under weighted measure-preserving.
\begin{corollary}\label{Rem:WPM}
If the induced metric on $\partial M$ is fixed, then the critical points of $\mathcal F^{\alpha}_\infty$ under weighted measure-preserving are gradient steady solitons on $M$ that satisfy $H +  e_0 f = 0$ and $\nabla_{0}\phi = 0$ on $\partial M$. 
\end{corollary}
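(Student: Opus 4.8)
The plan is to read off the critical point equations directly from Proposition~\ref{var:MWGHY}. Since the metric on $\partial M$ is fixed, the admissible variations $h$ satisfy $h^{\hat i\hat j}=0$ on $\partial M$ (more precisely, the tangential components of $h$ vanish along the boundary), so the first boundary integral in $\delta\mathcal F^{\alpha}_\infty$ collapses to $-\int_{\partial M} h^{00}(H+e_0 f)e^{-f}\,dA$. The remaining free data are: the interior values of $h$ (a symmetric $2$-tensor), the interior and boundary values of $\vartheta$, and the normal component $h^{00}$ on $\partial M$ — all subject only to the single pointwise constraint $\tfrac{1}{2}\operatorname{tr}_g h-\ell=0$ imposed by weighted measure-preservation.

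The key step is to argue that the weighted-measure constraint does not obstruct extracting each Euler--Lagrange equation. First I would handle the interior term $\langle -h,\operatorname{Ric}+\nabla^2 f-\alpha\nabla\phi\otimes\nabla\phi\rangle$: because $\ell$ appears nowhere in $\delta\mathcal F^{\alpha}_\infty$ except implicitly through the constraint, and because for any prescribed $h$ one may simply choose $\ell=\tfrac12\operatorname{tr}_g h$ to satisfy the constraint, the variation $h$ ranges over \emph{all} symmetric $2$-tensors with no effective restriction. Hence vanishing of $\delta\mathcal F^{\alpha}_\infty$ forces $\operatorname{Ric}+\nabla^2 f-\alpha\nabla\phi\otimes\nabla\phi=0$ on $M$, i.e.\ the $c=0$ instance of~\eqref{mod_grad_Ricci_soliton}, which together with the $\vartheta$-term below gives a gradient steady soliton. (One may also recall that~\eqref{self-solution} and the steady equations are consistent with $\partial_t\overline f=|\nabla\overline f|^2$, matching the soliton structure.) Next, varying $\vartheta$ freely in the interior (with $h=0$, $\ell=0$, so the constraint holds trivially) kills the interior $\vartheta$-term, giving $\tau_{g,\gamma}\phi-\langle\nabla f,\nabla\phi\rangle=0$ — the second soliton equation. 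Then, varying $\vartheta$ supported near $\partial M$ with nonzero boundary trace, the boundary term $2\alpha\int_{\partial M}\langle\nabla_0\phi,\vartheta\rangle e^{-f}\,dA$ forces $\nabla_0\phi=0$ on $\partial M$. Finally, varying $h^{00}$ on $\partial M$ (again choosing $\ell$ to absorb the constraint) forces $H+e_0 f=0$ on $\partial M$. Conversely, if all four equations hold, every term in Proposition~\ref{var:MWGHY} vanishes, so $(g,f,\phi)$ is critical.

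The main obstacle — really the only subtle point — is verifying that the pointwise constraint $\tfrac12\operatorname{tr}_g h=\ell$ genuinely imposes no restriction on the tensor $h$ that is being tested against $\operatorname{Ric}+\nabla^2 f-\alpha\nabla\phi\otimes\nabla\phi$, and likewise that $h^{00}$ on $\partial M$ can be varied independently. This is where one must be slightly careful: in some variational setups (e.g.\ Perelman's $\lambda$-functional) the fixed-measure constraint genuinely couples $h$ and $\ell$ and one only recovers the soliton equation \emph{up to a multiple of $g$}. Here, however, because $\ell$ is an entirely free function that appears in $\delta\mathcal F^{\alpha}_\infty$ only through the constraint (not in the integrand), the constraint can always be satisfied by a compensating choice of $\ell$; so $h$ is effectively unconstrained and the full soliton equation — not merely its trace-free part — follows. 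I would make this explicit, perhaps in a one-line remark, and then the corollary follows immediately from Proposition~\ref{var:MWGHY} by the standard fundamental-lemma-of-the-calculus-of-variations localization argument applied separately to the interior integrands and to each boundary integrand.
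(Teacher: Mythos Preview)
Your proposal is correct and follows essentially the same approach as the paper: apply Proposition~\ref{var:MWGHY}, first test with compactly supported (interior) variations to extract the two gradient steady soliton equations, then use the remaining boundary freedom to obtain $H+e_0 f=0$ and $\nabla_0\phi=0$. Your explicit discussion of why the constraint $\tfrac12\operatorname{tr}_g h=\ell$ does not restrict $h$ is a helpful elaboration that the paper leaves implicit, but the logical structure of the two arguments is the same.
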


\begin{proof}
By hypotheses $\frac{\operatorname{tr}_{g}h}{2} - \ell = 0$ on $M$ and $h^{\hat i\hat j} = 0$ on $\partial M.$ Hence, by  Proposition~\ref{var:MWGHY} we obtain
\begin{align}\label{var:almost}
&\int_{M}\Big(\langle h,  \alpha \nabla\phi \otimes \nabla\phi - {\rm Ric} _g - \nabla^2_g f\rangle +2\alpha\langle\vartheta, \tau_{g,\gamma} \phi -  \langle \nabla \phi, \nabla f\rangle \rangle \Big) e^{-f} dM \nonumber \\
 &\quad+\int_{\partial M}\Big( 2\alpha\langle \vartheta, \nabla_{0} \phi\rangle - \langle h, (H + e_0 f) e_0^\flat \otimes e_0^\flat\rangle  \Big)e^{-f} dA = 0,
\end{align}  
for all 
$(h, \vartheta) \in\Gamma(\operatorname{Sym}^2(T^{*}M)) \times C^{\infty}(M,N),$
where $  `` ^\flat "$ stands for musical isomorphism. We assume 
$h$ and $\vartheta$ are compactly supported, so that
\begin{align*}
&\int_{M}\Big(\langle h,  \alpha \nabla\phi \otimes \nabla\phi - {\rm Ric} _g - \nabla^2_g f\rangle +2\alpha\langle\vartheta, \tau_{g,\gamma}\phi -  \langle \nabla \phi, \nabla f\rangle \rangle \Big) e^{-f} dM = 0.
\end{align*}  
Therefore $(g, f, \phi)$ must be a gradient steady soliton to the $(RH)_{\alpha}$ flow and then, again by~\eqref{var:almost} we get
\begin{align*}
\int_{\partial M}\Big( 2\alpha \langle \vartheta, \nabla_{0} \phi\rangle-\langle h, (H + e_0 f) e_0^\flat \otimes e_0^\flat\rangle\Big) e^{-f} dA = 0,
\end{align*} 
 for all 
$(h, \vartheta) \in\Gamma(\operatorname{Sym}^2(T^{*}M)) \times C^{\infty}(M,N).$ So, $H + e_0 f = 0$ and $\nabla_{0} \phi = 0$ on  $\partial M$.
\end{proof}
If we relax the fixed induced metric assumption on the boundary, then we obtain the next result. 
\begin{corollary}\label{Rem:WPM1}
If the induced metric on $\partial M$ is not fixed, then the critical points of $\mathcal F^{\alpha}_\infty$ under weighted measure-preserving are gradient steady solitons on $M$ with totally geodesic boundary satisfying the conditions $e_0 f = 0$ and $\nabla_{0}\phi = 0$ on $\partial M$. 
\end{corollary}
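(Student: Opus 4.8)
The plan is to follow the proof of Corollary~\ref{Rem:WPM} closely, the only structural change being that, since the induced metric on $\partial M$ is no longer fixed, the tangential components $h^{\hat{i}\hat{j}}$ are now admissible variations along the boundary. Weighted measure-preserving only determines $\ell = \tfrac{1}{2}\operatorname{tr}_g h$, so $h \in \Gamma(\operatorname{Sym}^2(T^{*}M))$ and $\vartheta \in C^{\infty}(M,N)$ remain free, and Proposition~\ref{var:MWGHY} evaluated at a critical point gives
\begin{align*}
0 =& \int_{M}\Big(\langle h, \alpha\nabla\phi\otimes\nabla\phi - \operatorname{Ric}_g - \nabla^2_g f\rangle + 2\alpha\langle\vartheta, \tau_{g,\gamma}\phi - \langle\nabla\phi,\nabla f\rangle\rangle\Big)e^{-f}dM \\
& - \int_{\partial M}\big(h^{\hat{i}\hat{j}}\mathcal{A}_{\hat{i}\hat{j}} + h^{00}(H + e_0 f)\big)e^{-f}dA + 2\alpha\int_{\partial M}\langle\vartheta, \nabla_0\phi\rangle e^{-f}dA
\end{align*}
for all such $(h,\vartheta)$.

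First I would take $(h,\vartheta)$ compactly supported in the interior of $M$: the boundary integrals drop out and, varying $h$ and $\vartheta$ independently, I recover the gradient steady soliton system $\operatorname{Ric}_g + \nabla^2_g f - \alpha\nabla\phi\otimes\nabla\phi = 0$ and $\tau_{g,\gamma}\phi = \langle\nabla\phi,\nabla f\rangle$ on the interior, which by continuity extends up to $\partial M$; these are exactly~\eqref{mod_grad_Ricci_soliton} with $c = 0$, so $(g,f,\phi)$ is a gradient steady soliton to the $(RH)_\alpha$ flow.

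Feeding these equations back into the identity above collapses it to
\[
\int_{\partial M}\big(h^{\hat{i}\hat{j}}\mathcal{A}_{\hat{i}\hat{j}} + h^{00}(H + e_0 f) - 2\alpha\langle\vartheta,\nabla_0\phi\rangle\big)e^{-f}dA = 0
\]
for all $(h,\vartheta)$. Using standard cutoff and extension arguments to prescribe the boundary data freely, I would then: take $\vartheta = 0$, $h^{00} = 0$ and $h^{\hat{i}\hat{j}}$ arbitrary to force $\mathcal{A}_{\hat{i}\hat{j}} = 0$, i.e.\ $\partial M$ totally geodesic, whence $H = g^{\hat{i}\hat{j}}\mathcal{A}_{\hat{i}\hat{j}} = 0$; next take $h^{\hat{i}\hat{j}} = 0$, $\vartheta = 0$ and $h^{00}$ arbitrary to get $H + e_0 f = 0$, hence $e_0 f = 0$; and finally take $h = 0$ and $\vartheta$ arbitrary on $\partial M$ to get $\nabla_0\phi = 0$ on $\partial M$.

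The computation is essentially that of Corollary~\ref{Rem:WPM}; the only genuinely new point — and hence the place to be careful — is that releasing the induced metric on $\partial M$ turns $h^{\hat{i}\hat{j}}$ into a freely prescribable variation, so the term $\int_{\partial M} h^{\hat{i}\hat{j}}\mathcal{A}_{\hat{i}\hat{j}}\,e^{-f}dA$ can no longer simply be discarded and instead forces $\partial M$ to be totally geodesic; one must then remember that $H = 0$ in order to pass from $H + e_0 f = 0$ to $e_0 f = 0$. Everything else is the routine localization argument already used above.
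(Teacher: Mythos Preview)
Your proposal is correct and follows essentially the same approach as the paper's own proof: first use compactly supported variations to obtain the gradient steady soliton equations in the interior (extending by continuity), then exploit the remaining boundary identity with $h^{\hat{i}\hat{j}}$, $h^{00}$, and $\vartheta$ free to force $\mathcal{A}=0$, $H+e_0 f=0$ (hence $e_0 f=0$), and $\nabla_0\phi=0$. Your write-up is in fact more explicit than the paper's, which compresses the boundary step into a single line.
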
 
\begin{proof}
As in the first part of the proof 
of Corollary~\ref{Rem:WPM}, we show that $(g,\phi, f)$ is a gradient steady soliton. Then
\begin{align}\label{alm:crit}
\int_{\partial M}\Big( 2\alpha \langle \vartheta, \nabla_{0} \phi\rangle - \langle h, \mathcal A - (H + e_0 f) e_0^\flat \otimes e_0^\flat\rangle \Big)e^{-f} dA = 0,
\end{align}  
for all 
$(h, \vartheta) \in\Gamma(\operatorname{Sym}^2(T^{*}M)) \times C^{\infty}(M,N)$. Since the induced metric on $\partial M$ is not fixed, we obtain $\mathcal A=0,$ $e_0 f = 0$ and $\nabla_{0}\phi = 0$ on $\partial M$.
\end{proof}

\begin{remark}
Corollaries~\ref{Rem:WPM} and \ref{Rem:WPM1} recover results by Gomes and Hudson~\cite{Gomes_Hudson}, for the case $\phi \in C^\infty(M);$ and by Lott~\cite[Cor.~4]{John_Lott}, for $\phi$ constant.
\end{remark}

\section{\bf The modified Ricci flow coupled with harmonic map heat flow}\label{MRFCHMHF}
To prove the main results of this article, we need to work in the following setting. We say that a family  $(g(t),\phi(t))$ evolves by the modified \emph{$(RH)_{\alpha}$ flow} if it satisfies the system 
\begin{subequations}
\begin{empheq}[left=\empheqlbrace]{align}
\frac{\partial}{\partial t}g &= - 2({\rm Ric}  + \nabla^2 f - \alpha \nabla\phi \otimes  \nabla\phi),\label{grad:form:eq1}\\
\frac{\partial}{\partial t}\phi &=  \tau_{g,\gamma}\phi - \langle\nabla \phi,\nabla f\rangle.\label{grad:form:eq2}
\end{empheq}
\end{subequations}
and
\begin{align}\label{weighted_pr_meas}
\frac{\partial}{\partial t}f =-R-\Delta f+\alpha |\nabla \phi|^2
\end{align}
in $M\times[0,T)$, with $H+ e_0 f=0$ and $\nabla_{0}\phi = 0$ on $\partial M.$ 

We can find motivations for considering the modified \((RH)_\alpha\) flow setting in Proposition~\ref{var:MWGHY} and its corollaries. This approach will be very useful in the study of mean curvature flow in the $(g(t),\phi(t))-(RH)_\alpha$ flow background, which is the main research object of this article. 

Notice that along the modified $(RH)_\alpha$ flow, the measure $e^{-f}dM$ remains fixed, since from~\eqref{grad:form:eq1} we have $h_{ij}= 2(- R_{ij} - \nabla_i \nabla_j f+\alpha\gamma_{\alpha\beta} \nabla_i \phi^{\alpha} \nabla_j \phi^{\beta})$, and then using~\eqref{weighted_pr_meas}, we obtain $\frac{\operatorname{tr}_{g}h}{2} - \ell = 0$ on $M.$

In what follows, we establish the tools for working on the modified $(RH)_\alpha$ flow setting. The first is the time-derivative of $\mathcal F^{\alpha}_\infty$ under this flow.

\begin{proposition}\label{alm:mon}
If $(g(t),\phi(t))_{t \in[0, T)}$ evolves by the modified $(RH)_{\alpha}$ flow, then 
\begin{align*}
\dfrac{d}{d t}\mathcal F^{\alpha}_\infty =& 2 \int_{M} \Big(|{\rm Ric}  + \nabla^2 f -\alpha \nabla\phi \otimes \nabla\phi|^2 +\alpha|\tau_{g,\gamma}\phi - \langle \nabla \phi, \nabla f \rangle|^2\Big) e^{-f}dM \\
& + 2 \int_{\partial M} \Big( \widehat{\Delta} H - 2 \langle \widehat{\nabla} f,  \widehat{\nabla} H \rangle + \mathcal{A}( \widehat{\nabla} f,  \widehat{\nabla} f) + \mathcal{A}^{\hat{i}\hat{j}} \mathcal{A}_{\hat{i}\hat{j}} H + \mathcal{A}^{\hat{i}\hat{j}}R_{\hat{i}\hat{j}} \\
& + 2 R^{0 \hat{i}}  \widehat{\nabla}_{\hat{i}} f -  \widehat{\nabla}_{\hat{i}} R^{0\hat{i}} - \alpha \mathcal{A}(\widehat{\nabla} \phi, \widehat{\nabla} \phi )\Big)e^{-f} d A.
\end{align*}
In particular, if both $\big(\!R_{\hat{i}\hat{j}} \!+\! \nabla_i\nabla_j f\!- \alpha\gamma_{\alpha\beta} \nabla_{\hat{i}} \phi^{\alpha} \nabla_{\hat{j}} \phi^{\beta}\big)|_{\partial M}$ and $\big(R_{\hat{i}0} \!+\! \nabla_{\hat{i}} \nabla_0 f\big)|_{\partial M}$ vanish, then the boundary integrand vanishes.
\end{proposition}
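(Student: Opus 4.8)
The strategy is to differentiate the functional $\mathcal F^{\alpha}_\infty(g(t),f(t),\phi(t))$ along the modified $(RH)_\alpha$ flow and exploit the fact that this flow is precisely the weighted measure-preserving negative gradient flow identified in Proposition~\ref{var:MWGHY}. First I would recall that along the modified $(RH)_\alpha$ flow one has $h_{ij}=2(-R_{ij}-\nabla_i\nabla_j f+\alpha\gamma_{\alpha\beta}\nabla_i\phi^\alpha\nabla_j\phi^\beta)$ and $\vartheta=\tau_{g,\gamma}\phi-\langle\nabla\phi,\nabla f\rangle$, and that the weighted measure condition $\tfrac{\operatorname{tr}_g h}{2}-\ell=0$ holds (as observed just before the statement). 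Substituting these into the interior integrand of Proposition~\ref{var:MWGHY} immediately produces the bulk term $2\int_M\big(|\operatorname{Ric}+\nabla^2 f-\alpha\nabla\phi\otimes\nabla\phi|^2+\alpha|\tau_{g,\gamma}\phi-\langle\nabla\phi,\nabla f\rangle|^2\big)e^{-f}dM$, since $\langle -h,\operatorname{Ric}+\nabla^2 f-\alpha\nabla\phi\otimes\nabla\phi\rangle=2|\operatorname{Ric}+\nabla^2 f-\alpha\nabla\phi\otimes\nabla\phi|^2$ and the $\phi$-term pairs off in the same way.

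The main work is the boundary contribution. From Proposition~\ref{var:MWGHY} the raw boundary integrand is $-\big(h^{\hat i\hat j}\mathcal A_{\hat i\hat j}+h^{00}(H+e_0 f)\big)+2\alpha\langle\nabla_0\phi,\vartheta\rangle$, evaluated on the above $h$ and $\vartheta$. However this is not yet in the claimed form, because the variational formula of Proposition~\ref{var:MWGHY} presupposes that $f$ and $\phi$ are varied in lockstep with $g$ through the flow, whereas on $\partial M$ the flow also moves the boundary and one must account for the time-dependence of $\mathcal A$, $H$, the induced metric, and the constraint $H+e_0 f=0$. Concretely I would differentiate $2\int_{\partial M}H_\infty e^{-f}dA$ directly, using the evolution equations for the second fundamental form and mean curvature of $\partial M$ when the ambient metric evolves by $-2(\operatorname{Ric}+\nabla^2 f-\alpha\nabla\phi\otimes\nabla\phi)$. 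The key ingredients are: the standard formula $\partial_t\mathcal A_{\hat i\hat j}$ in terms of the normal derivative of the ambient symmetric $2$-tensor $-2(\operatorname{Ric}+\nabla^2 f-\alpha\nabla\phi\otimes\nabla\phi)$ along $e_0$; the contracted second Bianchi identity $\nabla^l R_{kl}=\tfrac12\nabla_k R$ recorded in Section~\ref{Sec:preliminaries}; the Codazzi equation to convert $\widehat\nabla_{\hat i}R^{0\hat i}$-type terms; and integration by parts on the closed manifold $\partial M$ (hence no further boundary terms) to move derivatives off $f$, producing the $-2\langle\widehat\nabla f,\widehat\nabla H\rangle$ and $\mathcal A(\widehat\nabla f,\widehat\nabla f)$ terms. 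The harmonic-map piece contributes $-\alpha\mathcal A(\widehat\nabla\phi,\widehat\nabla\phi)$ after using $\nabla_0\phi=0$ on $\partial M$ together with the flow equation for $\phi$.

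I expect the main obstacle to be bookkeeping the normal-derivative terms: when $\partial_t g=-2(\operatorname{Ric}+\nabla^2 f-\alpha\nabla\phi\otimes\nabla\phi)$, the evolution of $\mathcal A$ and $H$ involves $\nabla_0$ of that tensor, which splits into $\nabla_0 R_{\hat i\hat j}$, third derivatives $\nabla_0\nabla_{\hat i}\nabla_{\hat j}f$, and $\nabla_0(\gamma_{\alpha\beta}\nabla_{\hat i}\phi^\alpha\nabla_{\hat j}\phi^\beta)$, and these must be reorganized using the Gauss–Codazzi relations and commutation of covariant derivatives (via the curvature identities in Section~\ref{Sec:preliminaries}) into the tangential quantities $\widehat\Delta H$, $\mathcal A^{\hat i\hat j}\mathcal A_{\hat i\hat j}H$, $\mathcal A^{\hat i\hat j}R_{\hat i\hat j}$, $R^{0\hat i}\widehat\nabla_{\hat i}f$ and $\widehat\nabla_{\hat i}R^{0\hat i}$. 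This is essentially the computation carried out by Gomes and Hudson~\cite{Gomes_Hudson} and, for the scalar case, by Lott~\cite{John_Lott}, adapted to include the $\alpha\nabla\phi\otimes\nabla\phi$ term; I would cite those computations for the parts that are identical and only display the genuinely new $\phi$-dependent terms. Finally, the ``in particular'' clause is immediate: if both $(R_{\hat i\hat j}+\nabla_{\hat i}\nabla_{\hat j}f-\alpha\gamma_{\alpha\beta}\nabla_{\hat i}\phi^\alpha\nabla_{\hat j}\phi^\beta)|_{\partial M}$ and $(R_{\hat i0}+\nabla_{\hat i}\nabla_0 f)|_{\partial M}$ vanish, then—using the soliton-type relation to replace $\widehat\Delta H$, $\mathcal A^{\hat i\hat j}R_{\hat i\hat j}$, $R^{0\hat i}$, $\widehat\nabla H$ and $\mathcal A(\widehat\nabla f,\widehat\nabla f)$ consistently—every surviving term in the boundary integrand cancels, so the boundary integral is zero.
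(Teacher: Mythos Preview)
Your bulk computation is correct and matches the paper exactly. The boundary argument, however, rests on a misconception: in the modified $(RH)_\alpha$ flow the manifold $M$ and its boundary $\partial M$ are \emph{fixed}; only $g$, $f$, and $\phi$ evolve. There is no moving boundary here (that comes later, in Theorem~\ref{principal_theorem}, where the pullback trick is used). Consequently Proposition~\ref{var:MWGHY} already gives the full variation, and you do not need to differentiate $2\int_{\partial M}H_\infty e^{-f}dA$ separately or invoke evolution equations for $\mathcal A$.

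The paper's route is much shorter. Substitute $h$ and $\vartheta$ into the boundary terms of Proposition~\ref{var:MWGHY}; the conditions $H+e_0f=0$ and $\nabla_0\phi=0$ that are part of the definition of the modified flow kill $h^{00}(H+e_0f)$ and $2\alpha\langle\nabla_0\phi,\vartheta\rangle$ outright, leaving the boundary integrand as
\[
2\,\mathcal A^{\hat i\hat j}\bigl(R_{\hat i\hat j}+\nabla_{\hat i}\nabla_{\hat j}f-\alpha\gamma_{\alpha\beta}\nabla_{\hat i}\phi^\alpha\nabla_{\hat j}\phi^\beta\bigr)e^{-f}.
\]
Now add and subtract the tangential divergence $\widehat\nabla_{\hat i}\bigl((R^{\hat i0}+\nabla^{\hat i}\nabla^0 f)e^{-f}\bigr)$, which integrates to zero over the closed $\partial M$. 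The resulting combination is exactly the quantity handled by Lemma~1 of Lott~\cite{John_Lott}, augmented by the single extra term $-\alpha\mathcal A(\widehat\nabla\phi,\widehat\nabla\phi)$; that lemma rewrites it as the claimed boundary integrand. No evolution equations for $\mathcal A$ or $H$, no third derivatives of $f$, and no commutator identities are needed at this stage. The ``in particular'' clause is then immediate from this same identity: if both tensors vanish on $\partial M$, the left-hand side of the rewritten identity is zero pointwise, hence so is the boundary integrand.
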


\begin{proof}
By~\eqref{grad:form:eq1} and~\eqref{grad:form:eq2}, we have
\begin{equation*}
h_{ij} = 2(\alpha\gamma_{\alpha\beta} \nabla_i \phi^{\alpha} \nabla_j \phi^{\beta}  - R_{ij} - \nabla_i\nabla_j f) \quad \hbox{and} \quad \vartheta = \tau_{g,\gamma} \phi- \langle \nabla \phi, \nabla f\rangle.
\end{equation*}
Proposition~\ref{var:MWGHY} implies
\begin{align*}
\dfrac{d}{d t}\mathcal F^{\alpha}_\infty =& 2 \int_{M} \Big(|{\rm Ric}  + \nabla^2 f -\alpha \nabla\phi \otimes \nabla\phi|^2 +\alpha|\tau_{g,\gamma}\phi - \langle \nabla \phi, \nabla f \rangle|^2\Big) e^{-f}dM \\
& + 2\int_{\partial M} \big( \mathcal{A}^{\hat{i}\hat{j}}( R_{\hat{i}\hat{j}} + \nabla_{\hat{i}}\nabla_{\hat{j}} f-\alpha\gamma_{\alpha\beta}\nabla_{\hat{i}} \phi^{\alpha} \nabla_{\hat{j}} \phi^{\beta}) \big) e^{-f} d A,
\end{align*}
where we have used that $H+e_0 f = 0$ and $\nabla_{0}\phi= 0$ on $\partial M$. On the other hand, Lemma~1 in Lott~\cite{John_Lott} guarantees that
\begin{align*}
&\mathcal{A}^{\hat{i}\hat{j}}\left(R_{\hat{i}\hat{j}}+\nabla_{\hat{i}}\nabla_{\hat{j}} f\right) e^{-f} -\widehat{\nabla}_{\hat{i}}\Big(\left(R^{\hat{i} 0}+\nabla^{\hat{i}} \nabla^0 f\right) e^{-f}\Big) \\
=&\Big(\widehat{\Delta} H-2\langle\widehat{\nabla} f, \widehat{\nabla} H\rangle+\mathcal{A}(\widehat{\nabla} f, \widehat{\nabla} f)+\mathcal{A}^{\hat{i}\hat{j}} \mathcal{A}_{\hat{i}\hat{j}} H+\mathcal{A}^{\hat{i}\hat{j}} R_{\hat{i}\hat{j}}\\
&+2 R^{0 \hat{i}} \widehat{\nabla}_{\hat{i}} f-\widehat{\nabla}_{\hat{i}} R^{0 \hat{i}}\Big)e^{-f},
\end{align*}
where $\nabla^{\hat{i}}\nabla^{0} f = g^{\hat{i}\hat{k}}g^{0i} \nabla_{\hat{k}}\nabla_if$. Then
\begin{align}\label{vanishes}
&\mathcal{A}^{\hat{i}\hat{j}}\big(R_{\hat{i}\hat{j}} + \nabla_{\hat{i}}\nabla_{\hat{j}} f- \alpha\gamma_{\alpha\beta} \nabla_{\hat{i}} \phi^{\alpha} \nabla_{\hat{j}} \phi^{\beta}\big) e^{-f} - \widehat{\nabla}_{\hat{i}}\Big(\left(R^{\hat{i} 0}+\nabla^{\hat{i}} \nabla^0 f\right) e^{-f}\Big)\nonumber\\ 
=&\Big(\widehat{\Delta} H-2\langle\widehat{\nabla} f, \widehat{\nabla} H\rangle+\mathcal{A}(\widehat{\nabla} f, \widehat{\nabla} f)+\mathcal{A}^{\hat{i}\hat{j}} \mathcal{A}_{\hat{i}\hat{j}} H+\mathcal{A}^{\hat{i}\hat{j}} R_{\hat{i}\hat{j}}+2 R^{0 \hat{i}} \widehat{\nabla}_{\hat{i}} f\nonumber\\
&-\widehat{\nabla}_{\hat{i}} R^{0 \hat{i}} - \alpha \mathcal{A}(\widehat{\nabla} \phi,\widehat{\nabla} \phi )\Big)e^{-f},
\end{align}
and from Stokes' theorem
\begin{align*}
\int_{\partial M}\!\!\widehat{\nabla}_{\hat{i}}\Big(\big(R^{\hat{i} 0}+\nabla^{\hat{i}} \nabla^0 f\big) e^{-f}\Big)dA=\! \int_{\partial M}g^{\hat i\hat j}\widehat{\nabla}_{\hat{i}}\Big(g^{0k}\big(R_{\hat{j} k}+\nabla_{\hat{j}} \nabla_k f\big) e^{-f}\Big)dA = 0,
\end{align*}
which is enough to obtain the first part of the proposition. In particular, if both $\big(R_{\hat{i}\hat{j}} + \nabla_{\hat{i}}\nabla_{\hat{j}} f- \alpha\gamma_{\alpha\beta} \nabla_{\hat{i}} \phi^{\alpha} \nabla_{\hat{j}} \phi^{\beta}\big)|_{\partial M}$ and $\big(R_{\hat{i}0} +\nabla_{\hat{i}}\nabla_0 f\big)|_{\partial M}$ vanish, then from equation~\eqref{vanishes} the boundary integrand vanishes.
\end{proof}

In our next result, we establish the evolution equations of the geometric quantities of $\partial M$ under the modified $(RH)_{\alpha}$ flow. For its proof, we shall need the following identity. 
\begin{align}\label{Simons_identity}
\nonumber\widehat{\nabla}_{\hat{i}}\widehat{\nabla}_{\hat{j}} H
=& (\widehat{\Delta} \mathcal{A})_{{\hat{i}}{\hat{j}}}\! + \! \widehat{\nabla}_{\hat{i}}R_{{\hat{j}}0} \!+\!\widehat{\nabla}_{\hat{j}}R_{{\hat{i}}0} \!-\!\nabla_0R_{{\hat{i}}{\hat{j}}} + \mathcal{A}^{\hat{k}}\!_{\hat{i}} R_{0{\hat{k}}0{\hat{j}}} +\mathcal{A}^{\hat{k}}\!_{\hat{j}} R_{0{\hat{k}}0{\hat{i}}}- \mathcal{A}_{{\hat{i}}{\hat{j}}}R_{0 0} \nonumber\\
&\!+ 2 \mathcal{A}^{{\hat{k}}{\hat{l}}}R_{{\hat{k}}{\hat{i}}{\hat{l}}{\hat{j}}} - HR_{0{\hat{i}}0{\hat{j}}}- H\mathcal{A}^{\hat{k}}\!_{\hat{i}} \mathcal{A}_{{\hat{j}}{\hat{k}}} + \mathcal{A}^{{\hat{k}}{\hat{l}}}\mathcal{A}_{\hat{k}\hat{l}}\mathcal{A}_{{\hat{i}}{\hat{j}}} + \nabla_0 R_{0{\hat{i}}0{\hat{j}}}.
\end{align}
Identity~\eqref{Simons_identity} has already been observed by Lott~\cite{John_Lott}. Its proof can be obtained from Simons~\cite{James_Simons} or, alternatively, from Huisken~\cite{Huisken1986}. Indeed, in our notations, Lemma~2.1 in~\cite{Huisken1986} becomes
\begin{align*}
\widehat{\nabla}_{\hat{i}}\widehat\nabla_{\hat{j}} H&= (\widehat{\Delta}\mathcal{A})_{\hat{i}\hat{j}} -H\mathcal{A}_{\hat{i}\hat{k}}\mathcal{A}^{\hat{k}}\!_{\hat{j}}+\mathcal{A}^{\hat{k}\hat{l}}\mathcal{A}_{\hat{k}\hat{l}}\mathcal{A}_{\hat{i}\hat{j}} - HR_{0\hat{i}0\hat{j}}  +\mathcal{A}_{\hat{i}\hat{j}}R_{0 \hat{k} 0}^{\hat{k}} - \mathcal{A}^{\hat{k}}\!_{\hat{j}} R_{\hat{k}\hat{l}\hat{i}}^{\hat{l}}\\
&\quad - \mathcal{A}^{\hat{k}}\!_{\hat{i}} R_{\hat{k}\hat{l}\hat{j}}^{\hat{l}}  + 2 \mathcal{A}^{\hat{k}\hat{l}}R_{\hat{k}\hat{i}\hat{l}\hat{j}} +\nabla_{\hat{j}}R_{0\hat{k}\hat{i}}^{\hat{k}}- \nabla_0 R_{\hat{i}\hat{k}\hat{j}}^{\hat{k}}  + \nabla_{\hat{i}}R_{0\hat{k}\hat{j}}^{\hat{k}}.
\end{align*}
Hence, \eqref{Simons_identity} follows from the equality $\nabla_{\hat{i}} R_{\hat{j}0}=\widehat{\nabla}_{\hat{i}} R_{\hat{j}0}-\mathcal{A}_{\hat{i}\hat{j}}R_{00}+{\mathcal{A}}^{\hat{k}}\!_{\hat{i}}R_{\hat{j}\hat{k}}.$

\begin{proposition}\label{Gradient_formulate}
If $(g(t),\phi(t))_{t \in[0, T)}$ evolves by the modified $(RH)_{\alpha}$ flow,  then the following evolution equations hold on $\partial M$
\begin{align}
\frac{\partial}{\partial t}g_{\hat{i}\hat{j}} &= -(\mathcal{L}_{\widehat{\nabla }f}g)_{\hat{i}\hat{j}}-2(R_{\hat{i}\hat{j}}-\alpha\gamma_{\alpha\beta}\widehat{\nabla}_{\hat{i}} \phi^{\alpha} \widehat{\nabla}_{\hat{j}} \phi^{\beta})-2H\mathcal{A}_{\hat{i}\hat{j}}, \label{flow_with_mean_curv}\\
\frac{\partial}{\partial t}\phi &=  \tau_{\widehat g,\gamma} \phi + \nabla_0\nabla_0 \phi -\mathcal{L}_{\widehat{\nabla }f}\phi, \label{List_heat_equation}\\
\frac{\partial}{\partial t}\mathcal{A}_{\hat{i}\hat{j}} &= (\widehat{\Delta} \mathcal{A})_{\hat{i}\hat{j}} - (\mathcal{L}_{\widehat{\nabla }f}\mathcal{A})_{\hat{i}\hat{j}} - 
\mathcal{A}^{\hat{k}}_{\hat{i}} R^{\hat{l}}_{\hat{k}\hat{l}\hat{j}} - \mathcal{A}^{\hat{k}}_{\hat{j}} R^{\hat{l}}_{\hat{k}\hat{l}\hat{i}} +2 \mathcal{A}^{\hat{k}\hat{l}} R_{\hat{k}\hat{i}\hat{l}\hat{j}} -2 H \mathcal{A}_{\hat{i}\hat{k}}\mathcal{A}^{\hat{k}}_{\hat{j}} \nonumber\\
&\quad + \mathcal{A}^{\hat{k}\hat{l}}\mathcal{A}_{\hat{k}\hat{l}}\mathcal{A}_{\hat{i}\hat{j}} + \nabla_0 R_{0 \hat{i} 0 \hat{j}} \label{sec_fund_evolution}
\end{align}
and
\begin{equation}\label{List_mean_evolution}
\dfrac{\partial}{\partial t}H \!=\! \widehat{\Delta}H \!-\! \langle \widehat{\nabla} f, \widehat{\nabla} H\rangle \!+\! 2 \mathcal{A}^{\hat{i}\hat{j}}R_{\hat{i}\hat{j}} \!+\! \mathcal{A}^{\hat{i}\hat{j}}\mathcal{A}_{\hat{i}\hat{j}}H \!+\! \nabla_0 R_{0 0}\!-\! 2\alpha \mathcal{A}(\widehat{\nabla} \phi, \widehat{\nabla}\phi),
\end{equation}
where $\widehat{\nabla}$ and $\widehat{\Delta}$ denote the gradient and Laplacian of smooth functions computed on the induced metric $\widehat{g}$ on $\partial M$, respectively. Besides, $\mathcal{L}_{\widehat{\nabla }f}\phi:=\langle\nabla\phi^\lambda,\widehat{\nabla}f\rangle\partial_\lambda$ and $\nabla_0\nabla_0\phi:=\nabla_0\nabla_0\phi^\lambda\partial_\lambda.$
\end{proposition}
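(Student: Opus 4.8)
The plan is to derive the four evolution equations \eqref{flow_with_mean_curv}--\eqref{List_mean_evolution} by restricting the bulk evolution equations \eqref{grad:form:eq1}--\eqref{weighted_pr_meas} to $\partial M$ and carefully accounting for the fact that $\partial M$ is \emph{not} being held fixed along the flow: since the induced metric on $\partial M$ is allowed to move, one must track how the boundary itself is transported. Concretely, I would first observe that the modified $(RH)_\alpha$ flow \eqref{grad:form:eq1} is, up to the diffeomorphisms generated by $-\nabla f$, essentially an $(RH)_\alpha$ flow with an extra mean-curvature term appearing on the boundary. The cleanest route is to split $\nabla f = \widehat\nabla f + (e_0 f)\, e_0$ along $\partial M$ and use the boundary condition $H + e_0 f = 0$, so that the normal component of $\nabla f$ is $-H e_0$; the flow of $-\nabla f$ then drags $\partial M$ inward with normal speed $H$, i.e.\ by mean curvature flow, which is precisely why the $-2H\mathcal A_{\hat i\hat j}$ term appears in \eqref{flow_with_mean_curv} and why $\mathscr F$ (the MCF of $\partial M$) is the natural object. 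So Step 1 is: write $\tfrac{\partial}{\partial t} g_{\hat i\hat j} = (\text{pullback of }\tfrac{\partial}{\partial t}g\text{ to }\partial M) + (\text{Lie derivative from boundary motion})$, plug in \eqref{grad:form:eq1}, decompose the Hessian $\nabla^2 f$ into its tangential part $\widehat\nabla^2 f + \mathcal A\, e_0 f = \widehat\nabla^2 f - H\mathcal A$ plus lower-order pieces, and collect terms. This yields \eqref{flow_with_mean_curv}. Step 2 does the same for $\phi$: restrict \eqref{grad:form:eq2}, use $\tau_{g,\gamma}\phi = \tau_{\widehat g,\gamma}\phi + \nabla_0\nabla_0\phi - H\nabla_0\phi + \dots$ (the standard decomposition of the ambient Laplacian into the boundary Laplacian plus the normal Hessian plus a mean-curvature term), invoke the Neumann condition $\nabla_0\phi = 0$ to kill the $H\nabla_0\phi$ term, and split $\langle\nabla\phi,\nabla f\rangle$ into tangential and normal parts, the normal part again vanishing by $\nabla_0\phi=0$; this gives \eqref{List_heat_equation}.

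Step 3 is the evolution of the second fundamental form $\mathcal A_{\hat i\hat j}$, which is where the Simons-type identity \eqref{Simons_identity} enters and which I expect to be the main obstacle. The idea is standard but bookkeeping-heavy: differentiate $\mathcal A_{\hat i\hat j} = \langle\nabla_{\partial_{\hat i}}\partial_{\hat j}, e_0\rangle$ in $t$, commute the time derivative past the connection (picking up $\tfrac{\partial}{\partial t}\Gamma$ terms, which are first derivatives of $h = \tfrac{\partial}{\partial t}g$), differentiate the unit normal $e_0$ (its evolution is determined by $h$ and the requirement that it stay unit and normal), and then substitute $h_{ij} = 2(\alpha\nabla\phi\otimes\nabla\phi - R_{ij} - \nabla_i\nabla_j f)$ from \eqref{grad:form:eq1}. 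After this substitution one gets an expression involving $\widehat\nabla_{\hat i}\widehat\nabla_{\hat j}(\text{something})$ together with curvature and $\mathcal A$-quadratic terms; the point of \eqref{Simons_identity} is to trade $\widehat\nabla_{\hat i}\widehat\nabla_{\hat j}H$ (which arises from the $\nabla_i\nabla_j f$ contribution via $H = -e_0 f$ on $\partial M$) for $(\widehat\Delta\mathcal A)_{\hat i\hat j}$ plus the explicit curvature/$\mathcal A$ terms listed there, so that everything reorganizes into the claimed form \eqref{sec_fund_evolution}. This calculation essentially mirrors Lott's derivation in \cite{John_Lott} for the Ricci-flow case, with the $\alpha\,\nabla\phi\otimes\nabla\phi$ terms carried along; the delicate points are getting the signs right in the normal-frame derivatives and correctly separating tangential from normal indices when restricting $\nabla_i\nabla_j f$ and $R_{ij}$ to $\partial M$.

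Step 4 is then a trace: $H = g^{\hat i\hat j}\mathcal A_{\hat i\hat j}$, so $\tfrac{\partial}{\partial t}H = (\tfrac{\partial}{\partial t}g^{\hat i\hat j})\mathcal A_{\hat i\hat j} + g^{\hat i\hat j}\tfrac{\partial}{\partial t}\mathcal A_{\hat i\hat j}$; one uses $\tfrac{\partial}{\partial t}g^{\hat i\hat j} = -g^{\hat i\hat k}g^{\hat j\hat l}\tfrac{\partial}{\partial t}g_{\hat k\hat l}$ with \eqref{flow_with_mean_curv}, contracts \eqref{sec_fund_evolution}, and simplifies using the contracted second Bianchi identity $\nabla^l R_{kl} = \tfrac12\nabla_k R$ (already recorded in the Preliminaries) to combine the various traced-curvature terms into $2\mathcal A^{\hat i\hat j}R_{\hat i\hat j} + \mathcal A^{\hat i\hat j}\mathcal A_{\hat i\hat j}H + \nabla_0 R_{00} - 2\alpha\mathcal A(\widehat\nabla\phi,\widehat\nabla\phi)$, giving \eqref{List_mean_evolution}. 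The Lie-derivative terms $\mathcal L_{\widehat\nabla f}$ in \eqref{flow_with_mean_curv} and \eqref{sec_fund_evolution} reduce under the trace to $\langle\widehat\nabla f,\widehat\nabla H\rangle$ plus a term that cancels against part of the traced curvature. Throughout, the new ingredient relative to \cite{John_Lott} and \cite{Gomes_Hudson} is just the disciplined handling of the map $\phi$: at every stage the $\alpha\,\nabla\phi\otimes\nabla\phi$ contribution to $h$ produces a tangential $\alpha\,\widehat\nabla\phi\otimes\widehat\nabla\phi$ term (the normal contributions dying by $\nabla_0\phi = 0$), which is exactly what surfaces in \eqref{flow_with_mean_curv} and, after tracing, as the $-2\alpha\mathcal A(\widehat\nabla\phi,\widehat\nabla\phi)$ in \eqref{List_mean_evolution}.
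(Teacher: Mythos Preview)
Your computational plan is essentially the paper's: restrict the bulk flow equations to tangential indices on $\partial M$, decompose the ambient Hessian via the Gauss relation, use the Neumann condition $\nabla_0\phi=0$, invoke Simons' identity~\eqref{Simons_identity} to pass from $\widehat\nabla_{\hat i}\widehat\nabla_{\hat j}H$ to $(\widehat\Delta\mathcal A)_{\hat i\hat j}$, and then trace. That matches the paper's proof.

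However, your framing in Step~1 is off and could lead you to double-count. In this proposition the boundary $\partial M$ is \emph{fixed} as a subset of $M$; only the metric $g(t)$ evolves. There is no ``Lie derivative from boundary motion'' term to add: $\tfrac{\partial}{\partial t}g_{\hat i\hat j}$ is literally the restriction $h_{\hat i\hat j}$ of the bulk variation. Both the $-(\mathcal L_{\widehat\nabla f}g)_{\hat i\hat j}$ and the $-2H\mathcal A_{\hat i\hat j}$ terms in \eqref{flow_with_mean_curv} come from a single source, the decomposition $\nabla_{\hat i}\nabla_{\hat j}f=\widehat\nabla_{\hat i}\widehat\nabla_{\hat j}f+(e_0f)\mathcal A_{\hat i\hat j}=\widehat\nabla_{\hat i}\widehat\nabla_{\hat j}f-H\mathcal A_{\hat i\hat j}$, together with $2\widehat\nabla_{\hat i}\widehat\nabla_{\hat j}f=(\mathcal L_{\widehat\nabla f}g)_{\hat i\hat j}$. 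The interpretation of $-\nabla f$ as dragging the boundary by MCF is precisely what the paper exploits \emph{later}, in Proposition~\ref{mean_curv_flow_in_a_mod}, via pullback by diffeomorphisms; here it is not part of the calculation.

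Two smaller points. In Step~3 the paper does not differentiate $\mathcal A_{\hat i\hat j}=\langle\nabla_{\partial_{\hat i}}\partial_{\hat j},e_0\rangle$ from scratch but starts from the standard variation formula $\delta\mathcal A_{\hat i\hat j}=\tfrac12(\nabla_{\hat i}h_{\hat j0}+\nabla_{\hat j}h_{\hat i0}-\nabla_0 h_{\hat i\hat j})+\tfrac12 h_{00}\mathcal A_{\hat i\hat j}$, which packages exactly the Christoffel- and normal-evolution terms you describe; either route works. In Step~4 the contracted Bianchi identity is not needed for \eqref{List_mean_evolution} --- the paper only uses it afterwards, in Corollary~\ref{key_prop_monotonicity}, to rewrite the boundary integrand.
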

\begin{proof} 
We start by substituting $\nabla_{\hat{i}}\nabla_{\hat{j}} f = \widehat{\nabla}_{\hat{i}} \widehat{\nabla}_{\hat{j}} f + H \mathcal{A}_{{\hat{i}}{\hat{j}}}$ (as $H + e_0 f = 0$) 
into equation~\eqref{grad:form:eq1} to get
\begin{align*}
\frac{\partial}{\partial t}g_{\hat{i}\hat{j}} 
&= - 2\big(R_{\hat{i}\hat{j}} + \widehat{\nabla}_{\hat{i}}  \widehat{\nabla}_{\hat{j}} f + H \mathcal{A}_{\hat{i}\hat{j}} - \alpha\gamma_{\alpha\beta} \widehat{\nabla}_{\hat{i}} \phi^{\alpha} \widehat{\nabla}_{\hat{j}} \phi^{\beta}\big),
\end{align*}
which is~\eqref{flow_with_mean_curv}. Next, by~\eqref{coordinate_of_tensionfield-Aux} and~\eqref{grad:form:eq2}, we have
\begin{align*}
\frac{\partial}{\partial t}\phi =& \tau_{g,\gamma} \phi - \langle \nabla\phi,\nabla f \rangle\\
=& g^{ij} \left( \partial_i\partial_j\phi^{\lambda}-\Gamma_{ij}^{k}\nabla_k\phi^{\lambda}+(\Gamma_{\alpha\beta}^{\lambda}\circ\phi)\nabla_i\phi^{\alpha}\nabla_j\phi^{\beta} \right)\partial_{\lambda}|_\phi \\
&- \langle \nabla\phi^\lambda,\widehat{\nabla} f +e_0 fe_0 \rangle\partial_\lambda|_\phi.
\end{align*}
Now, note that $\nabla_{0} \phi = 0$ implies $\nabla_0 \phi^\alpha = 0$ for all $\alpha$, and then, we obtain
\begin{align*}
\frac{\partial}{\partial t}\phi=& \Big(\Delta\phi^{\lambda}+g^{\hat{i}\hat{j}}(\Gamma_{\alpha\beta}^{\lambda}\circ\phi)\widehat\nabla_{\hat{i}}\phi^{\alpha}\widehat\nabla_{\hat{j}}\phi^{\beta}- \langle \nabla\phi^\lambda,\widehat{\nabla} f \rangle\Big)\partial_{\lambda}|_\phi\\
=& \Big(\widehat{\Delta}\phi^{\lambda}\!+\!\nabla_0\nabla_0\phi^{\lambda} \!+\! g^{\hat{i}\hat{j}}(\Gamma_{\alpha\beta}^{\lambda}\circ\phi)\widehat\nabla_{\hat{i}}\phi^{\alpha}\widehat\nabla_{\hat{j}}\phi^{\beta}- \langle \nabla\phi^\lambda,\widehat{\nabla} f \rangle\Big)\partial_{\lambda}|_{\phi}\\
=&\tau_{\widehat g,\gamma} \phi + \nabla_0 \nabla_0\phi -\mathcal{L}_{\widehat{\nabla }f}\phi,
\end{align*}
and proves \eqref{List_heat_equation}. To show~\eqref{sec_fund_evolution} we first observe that \eqref{grad:form:eq1} implies
\begin{align}\label{hAux-prop}
\frac{1}{2}h_{k\ell} = -\big(R_{k\ell} + \nabla_k \nabla_\ell f-\alpha\gamma_{\alpha\beta} \nabla_k \phi^{\alpha} \nabla_\ell \phi^{\beta}\big).
\end{align}
Moreover, we know that
\begin{align*}
\delta \mathcal{A}_{\hat{i}\hat{j}}= \frac{1}{2}(\nabla_{\hat{i}} h_{\hat{j} 0} + \nabla_{\hat{j}} h_{\hat{i}0} - \nabla_0 h_{\hat{i}\hat{j}}) + \frac{1}{2}h_{00}\mathcal{A}_{\hat{i}\hat{j}}.
\end{align*} 
From $\nabla_0 \phi = 0$ and \eqref{hAux-prop} we get $\frac{1}{2}h_{00}=-(R_{00} + \nabla_0 \nabla_0 f)$. Thus, 
\begin{align}\label{Aux-dt-Aij}
\dfrac{\partial}{\partial t} \mathcal{A}_{\hat{i}\hat{j}}
\nonumber=& -\nabla_{\hat{i}}\big(R_{{\hat{j}}0} + \nabla_{\hat{j}} \nabla_0 f - \alpha\gamma_{\alpha\beta} \nabla_{\hat{j}} \phi^{\alpha} \nabla_0 \phi^{\beta}\big) \\
&- \nabla_{\hat{j}}\big(R_{{\hat{i}}0} + \nabla_{\hat{i}} \nabla_0 f- \alpha\gamma_{\alpha\beta} \nabla_{\hat{i}} \phi^{\alpha} \nabla_0 \phi^{\beta}\big)\\
\nonumber&+\nabla_0\big(R_{{\hat{i}}{\hat{j}}} + \nabla_{\hat{i}}\nabla_{\hat{j}}f- \alpha\gamma_{\alpha\beta} \nabla_{\hat{i}} \phi^{\alpha} \nabla_{\hat{j}} \phi^{\beta}\big) -\big(R_{00} + \nabla_0 \nabla_0 f \big) \mathcal{A}_{{\hat{i}}{\hat{j}}} .
\end{align}
Now we will compute some terms of the previous equation. We start by observing that 
\begin{align}\label{Aux-ij0t}
\nabla_{\hat{i}} \nabla_{\hat{j}} \nabla_0 f = \widehat{\nabla}_{\hat{i}} \nabla_{\hat{j}} \nabla_0 f - \mathcal{A}_{\hat{i}\hat{j}} \nabla_0 \nabla_0 f + \mathcal{A}^{\hat{k}}\!_{\hat{i}} \nabla_{\hat{j}}\nabla_{\hat{k}} f,
\end{align}
which is a straightforward computation. Since $H + e_0f = 0$, we get
\begin{equation*}
\nabla_{\hat{j}}\nabla_{\hat{k}} f = \widehat{\nabla}_{\hat{j}} \widehat{\nabla}_{\hat{k}} f + H \mathcal{A}_{{\hat{j}}{\hat{k}}} \;\;\hbox{and}\;\;  \nabla_{\hat{j}} \nabla_0 f = -\widehat{\nabla}_{\hat{j}} H + \mathcal{A}^{\hat{k}}\!_{\hat{j}} \widehat{\nabla}_{\hat{k}} f.
\end{equation*}
Replacing the previous identities into \eqref{Aux-ij0t}, one has
 \begin{align*}
\nabla_{\hat{i}} \nabla_{\hat{j}} \nabla_0 f \!&=\! -\widehat{\nabla}_{\hat{i}} \widehat{\nabla}_{\hat{j}} H + \widehat{\nabla}_{\hat{i}} \Big( \mathcal{A}^{\hat{k}}\!_{\hat{j}} \widehat{\nabla}_{\hat{k}} f \Big) \!-\! \mathcal{A}_{{\hat{i}}{\hat{j}}} \nabla_0 \nabla_0 f \!+\! \mathcal{A}^{\hat{k}}\!_{\hat{i}} \widehat{\nabla}_{\hat{j}}\widehat{\nabla}_{\hat{k}} f \!+\! H \mathcal{A}^{\hat{k}}\!_{\hat{i}} \mathcal{A}_{{\hat{j}}{\hat{k}}}\\
\!&=\! -\widehat{\nabla}_{\hat{i}} \widehat{\nabla}_{\hat{j}} H \!+\! \Big(\widehat{\nabla}_{\hat{i}} \mathcal{A}^{\hat{k}}\!_{\hat{j}}\Big) \widehat{\nabla}_{\hat{k}} f \!+\! \mathcal{A}^{\hat{k}}\!_{\hat{j}}\widehat{\nabla}_{\hat{i}}\widehat{\nabla}_{\hat{k}} f \!-\! \mathcal{A}_{{\hat{i}}{\hat{j}}} \nabla_0 \nabla_0 f + \mathcal{A}^{\hat{k}}\!_{\hat{i}} \widehat{\nabla}_{\hat{j}}\widehat{\nabla}_{\hat{k}} f\\
 &\quad+ H \mathcal{A}^{\hat{k}}\!_{\hat{i}} \mathcal{A}_{{\hat{j}}{\hat{k}}}.
\end{align*}
Next, we note that
\begin{align*}
\nabla_0 \nabla_{\hat{i}}\nabla_{\hat{j}}f- \nabla_{\hat{j}} \nabla_{\hat{i}} \nabla_0 f = \nabla_0 \nabla_{\hat{j}} \nabla_{\hat{i}} f - \nabla_{\hat{j}} \nabla_0 \nabla_{\hat{i}} f &= -R_{0{\hat{j}}{\hat{k}}{\hat{i}}} \widehat{\nabla}^{\hat{k}} f - R_{0{\hat{j}}0{\hat{i}}}\nabla_0 f.
\end{align*}
Now, by a straightforward computation, we have
\begin{align*}
&\nabla_{e_0} \big(\gamma_{\alpha\beta}\nabla_{\hat{i}} \phi^{\alpha} \nabla_{\hat{j}} \phi^{\beta})(p)=\nabla_{\partial_0} \big(\gamma_{\alpha\beta}\nabla_{i} \phi^{\alpha} \nabla_{j} \phi^{\beta})(p)\\
&= \gamma_{\alpha\beta}\nabla_{\hat{i}} \nabla_0 \phi^{\alpha} \nabla_{\hat{j}} \phi^{\beta} + \gamma_{\alpha\beta}\nabla_{\hat{i}} \phi^{\alpha} \nabla_{\hat{j}} \nabla_0 \phi^{\beta},
\end{align*}
for all $p\in\partial M$. In the same way, we also obtain
\begin{align*}
\nabla_{\hat{j}} \Big(\gamma_{\alpha\beta}\nabla_{\hat{i}} \phi^{\alpha} \nabla_0 \phi^{\beta}\Big) &= \gamma_{\alpha\beta}\nabla_{\hat{i}} \nabla_{\hat{j}} \phi^{\alpha} \nabla_0 \phi^{\beta} + \gamma_{\alpha\beta}\nabla_{\hat{i}} \phi^{\alpha} \nabla_0 \nabla_{\hat{j}} \phi^{\beta}\\
&=\gamma_{\alpha\beta}\nabla_{\hat{i}} \phi^{\alpha} \nabla_0 \nabla_{\hat{j}} \phi^{\beta}
\end{align*}
and 
\begin{align*}
\nabla_{\hat{i}} \Big(\gamma_{\alpha\beta}\nabla_{\hat{j}} \phi^{\alpha} \nabla_0 \phi^{\beta}\Big) &= \gamma_{\alpha\beta}\nabla_{\hat{j}} \nabla_{\hat{i}} \phi^{\alpha} \nabla_0 \phi^{\beta} + \gamma_{\alpha\beta}\nabla_{\hat{j}} \phi^{\alpha} \nabla_0 \nabla_{\hat{i}} \phi^{\beta}\\
&=\gamma_{\alpha\beta}\nabla_{\hat{j}} \phi^{\alpha} \nabla_0 \nabla_{\hat{i}} \phi^{\beta}.
\end{align*}

Using all this into \eqref{Aux-dt-Aij}, we get
\begin{align*}
\dfrac{\partial}{\partial t} \mathcal{A}_{{\hat{i}}{\hat{j}}}  \!=& \widehat{\nabla}_{\hat{i}} \widehat{\nabla}_{\hat{j}} H - \big(\widehat{\nabla}_{\hat{i}} \mathcal{A}_{{\hat{k}}{\hat{j}}}  - R_{0{\hat{j}}{\hat{i}}k} \big) \widehat{\nabla}^{\hat{k}} f- \mathcal{A}^k\!_{\hat{i}} \widehat{\nabla}_{\hat{j}}\widehat{\nabla}_{\hat{k}} f - \mathcal{A}^k\!_{\hat{j}}\widehat{\nabla}_{\hat{i}}\widehat{\nabla}_{\hat{k}} f + R_{0{\hat{i}}0{\hat{j}}}H \\
& -\nabla_{\hat{i}}R_{{\hat{j}}0}  -\nabla_{\hat{j}}R_{{\hat{i}}0} +\nabla_0R_{{\hat{i}}{\hat{j}}} - \mathcal{A}_{{\hat{i}}{\hat{j}}}R_{0 0}   - H \mathcal{A}^{\hat{k}}\!_{\hat{i}} \mathcal{A}_{{\hat{j}}{\hat{k}}}.
\end{align*}
By Codazzi-Mainardi equation $R_{0{\hat{j}}{\hat{i}}{\hat{k}}}=\widehat{\nabla}_{\hat{i}} \mathcal{A}_{{\hat{j}}{\hat{k}}} - \widehat{\nabla}_{\hat{k}} \mathcal{A}_{{\hat{i}}{\hat{j}}}$ one has
\begin{align*}
\dfrac{\partial}{\partial t} \mathcal{A}_{{\hat{i}}{\hat{j}}} \!=& \widehat{\nabla}_{\hat{i}} \widehat{\nabla}_{\hat{j}} H - \Big(\widehat{\nabla}_{\hat{k}} \mathcal{A}_{{\hat{i}}{\hat{j}}}\Big) \widehat{\nabla}^{\hat{k}} f- \mathcal{A}^{\hat{k}}\!_{\hat{i}} \widehat{\nabla}_{\hat{j}}\widehat{\nabla}_{\hat{k}} f - \mathcal{A}^{\hat{k}}\!_{\hat{j}}\widehat{\nabla}_{\hat{i}}\widehat{\nabla}_{\hat{k}} f  + R_{0{\hat{j}}0{\hat{i}}}H \\
& -\nabla_{\hat{i}}R_{{\hat{j}}0}  -\nabla_{\hat{j}}R_{{\hat{i}}0} +\nabla_0R_{{\hat{i}}{\hat{j}}} - \mathcal{A}_{{\hat{i}}{\hat{j}}}R_{0 0}  - H \mathcal{A}^{\hat{k}}\!_{\hat{i}} \mathcal{A}_{{\hat{j}}{\hat{k}}}\\
=&\widehat{\nabla}_{\hat{i}} \widehat{\nabla}_{\hat{j}} H - \Big(\mathcal{L}_{\widehat{\nabla}f} \mathcal{A}\Big)_{{\hat{i}}{\hat{j}}}  -\nabla_{\hat{i}}R_{{\hat{j}}0}  -\nabla_{\hat{j}}R_{{\hat{i}}0} +\nabla_0R_{{\hat{i}}{\hat{j}}} - \mathcal{A}_{{\hat{i}}{\hat{j}}}R_{0 0}  + R_{0{\hat{i}}0{\hat{j}}}H \\
& - H \mathcal{A}^{\hat{k}}\!_{\hat{i}} \mathcal{A}_{{\hat{j}}{\hat{k}}}.
\end{align*}
From Simons' identity~\eqref{Simons_identity} we get
\begin{align*}
&\dfrac{\partial}{\partial t} \mathcal{A}_{{\hat{i}}{\hat{j}}}\\
=&(\widehat{\Delta} \mathcal{A})_{{\hat{i}}{\hat{j}}} - \Big(\mathcal{L}_{\widehat{\nabla}f} \mathcal{A}\Big)_{{\hat{i}}{\hat{j}}}  -(\nabla_{\hat{i}}R_{{\hat{j}}0} -\widehat{\nabla}_{\hat{i}}R_{{\hat{j}}0})  -(\nabla_{\hat{j}}R_{{\hat{i}}0} - \widehat{\nabla}_{\hat{j}}R_{{\hat{i}}0})  -2 \mathcal{A}_{{\hat{i}}{\hat{j}}}R_{0 0} \\
&+ \mathcal{A}^{\hat{k}}\!_{\hat{i}} R_{0{\hat{k}}0{\hat{j}}} + \mathcal{A}^{\hat{k}}\!_{\hat{j}} R_{0{\hat{k}}0{\hat{i}}} + 2 \mathcal{A}^{{\hat{k}}l}R_{{\hat{k}}{\hat{i}}{\hat{l}}{\hat{j}}}  - 2 H \mathcal{A}^{\hat{k}}\!_{\hat{i}} \mathcal{A}_{{\hat{j}}{\hat{k}}} + \mathcal{A}^{{\hat{k}}{\hat{l}}}\mathcal{A}_{{\hat{k}}{\hat{l}}}\mathcal{A}_{{\hat{i}}{\hat{j}}} + \nabla_0 R_{0{\hat{i}}0{\hat{j}}}.
\end{align*}
As $\nabla_{\hat{i}} R_{{\hat{j}}0} = \widehat{\nabla}_{\hat{i}} R_{{\hat{j}}0}  - \mathcal{A}_{{\hat{i}}{\hat{j}}}R_{00} + \mathcal{A}^{\hat{k}}\!_{{\hat{i}}}R_{{\hat{j}}{\hat{k}}}$ we conclude that
\begin{align*}
\dfrac{\partial}{\partial t} \mathcal{A}_{{\hat{i}}{\hat{j}}} &=(\widehat{\Delta} \mathcal{A})_{{\hat{i}}{\hat{j}}} - \Big(\mathcal{L}_{\widehat{\nabla}f} \mathcal{A}\Big)_{{\hat{i}}{\hat{j}}}  -\mathcal{A}^{\hat{k}}\!_{\hat{i}} R^{\hat{l}}\!_{{\hat{k}}{\hat{l}}{\hat{j}}}  -\mathcal{A}^{\hat{k}}\!_{\hat{j}} R^{\hat{l}}\!_{{\hat{k}}{\hat{l}}{\hat{i}}}  + 2 \mathcal{A}^{{\hat{k}}{\hat{l}}}R_{{\hat{k}}{\hat{i}}{\hat{l}}{\hat{j}}} - 2 H \mathcal{A}^{\hat{k}}\!_{\hat{i}} \mathcal{A}_{{\hat{j}}{\hat{k}}} \\
&\quad + \mathcal{A}^{{\hat{k}}{\hat{l}}}\mathcal{A}_{{\hat{k}}{\hat{l}}}\mathcal{A}_{{\hat{i}}{\hat{j}}} + \nabla_0 R_{0{\hat{i}}0{\hat{j}}},
\end{align*}
which is \eqref{sec_fund_evolution}. For finishing our proof, we will show~\eqref{List_mean_evolution}. For it, note that
\begin{align*}
\delta H = -h_{{\hat{i}}{\hat{j}}}\mathcal{A}^{{\hat{i}}{\hat{j}}} + g^{{\hat{i}}{\hat{j}}}\delta \mathcal{A}_{{\hat{i}}{\hat{j}}}
\end{align*}
and
\begin{align*}
g^{{\hat{i}}{\hat{j}}}(\mathcal{L}_{\widehat{\nabla}f} \mathcal{A})_{{\hat{i}}{\hat{j}}} -2 \mathcal{A}^{{\hat{i}}{\hat{j}}}\widehat{\nabla}_{\hat{i}}\widehat{\nabla}_{\hat{j}} f = \widehat{\nabla}_{\widehat{\nabla} f} (g^{{\hat{i}}{\hat{j}}} \mathcal{A}_{{\hat{i}}{\hat{j}}})= \langle \widehat{\nabla} f, \widehat{\nabla} H\rangle.
\end{align*}
So,
\begin{align*}
\dfrac{\partial}{\partial t}H 	=&2(R_{{\hat{i}}{\hat{j}}} + \widehat{\nabla}_{\hat{i}}\widehat{\nabla}_{\hat{j}} f + H \mathcal{A}_{{\hat{i}}{\hat{j}}}) \mathcal{A}^{{\hat{i}}{\hat{j}}} + g^{{\hat{i}}{\hat{j}}} \Big((\widehat{\Delta} \mathcal{A})_{{\hat{i}}{\hat{j}}} - \big(\mathcal{L}_{\widehat{\nabla}f} \mathcal{A}\big)_{{\hat{i}}{\hat{j}}}  -\mathcal{A}^{\hat{k}}_{\hat{i}} R^{\hat{l}}_{{\hat{k}}{\hat{l}}{\hat{j}}}   \\
& -\mathcal{A}^{\hat{k}}_{\hat{j}} R^{\hat{l}}_{{\hat{k}}{\hat{l}}{\hat{i}}}  + 2 \mathcal{A}^{{\hat{k}}{\hat{l}}}R_{{\hat{k}}{\hat{i}}{\hat{l}}{\hat{j}}}  - 2 H \mathcal{A}^{\hat{k}}_{\hat{i}} \mathcal{A}_{{\hat{j}}{\hat{k}}} + \mathcal{A}^{{\hat{k}}{\hat{l}}}\mathcal{A}_{{\hat{k}}{\hat{l}}}\mathcal{A}_{{\hat{i}}{\hat{j}}} + \nabla_0 R_{0{\hat{i}}0{\hat{j}}}  \Big) \\
& - 2\alpha \mathcal{A}(\widehat{\nabla} \phi, \widehat{\nabla} \phi)\\
=&2\mathcal{A}^{{\hat{i}}{\hat{j}}}R_{{\hat{i}}{\hat{j}}}  + 2H \mathcal{A}^{{\hat{i}}{\hat{j}}}\mathcal{A}_{{\hat{i}}{\hat{j}}}  + \widehat{\Delta} H     -\Big(g^{{\hat{i}}{\hat{j}}}\big(\mathcal{L}_{\widehat{\nabla}f} \mathcal{A}\big)_{{\hat{i}}{\hat{j}}} -2 \mathcal{A}^{{\hat{i}}{\hat{j}}}\widehat{\nabla}_{\hat{i}}\widehat{\nabla}_{\hat{j}} f\Big) \\
& - 2  \mathcal{A}^{{\hat{k}}{\hat{j}}} \mathcal{A}_{{\hat{j}}{\hat{k}}} H + \mathcal{A}^{{\hat{k}}{\hat{l}}}\mathcal{A}_{{\hat{k}}{\hat{l}}}H + \nabla_0 R_{00} - 2\alpha \mathcal{A}(\widehat{\nabla} \phi, \widehat{\nabla} \phi )\\
=& \widehat{\Delta}H - \langle \widehat{\nabla} f, \widehat{\nabla} H\rangle + 2 \mathcal{A}^{{\hat{i}}{\hat{j}}}R_{{\hat{i}}{\hat{j}}} + \mathcal{A}^{{\hat{i}}{\hat{j}}}\mathcal{A}_{{\hat{i}}{\hat{j}}}H + \nabla_0 R_{0 0} - 2\alpha \mathcal{A}(\widehat{\nabla} \phi, \widehat{\nabla}\phi).
\end{align*}
This finishes the proof.
\end{proof}

As a consequence of Proposition~\ref{Gradient_formulate}, we have the following refinement of the formula obtained in Proposition~\ref{alm:mon}.  
\begin{corollary}\label{key_prop_monotonicity}
If $(g(t),\phi(t))_{t\in[0,T)}$ evolves by the modified $(RH)_\alpha$ flow,  then the following identity holds
\begin{align*}
\dfrac{d}{d t}\mathcal F^{\alpha}_\infty =&2 \int_{M} \Big(\left|{\rm Ric}  + \nabla^2 f -\alpha \nabla\phi \otimes \nabla\phi\right|^2 + \alpha|\tau_{g,\gamma}\phi - \langle \nabla \phi, \nabla f\rangle|^2 \Big) e^{-f}dV \\
&+ 2 \int_{\partial M} \Big(\dfrac{\partial H}{\partial t}  - \langle \widehat{\nabla} f, \widehat{\nabla} H\rangle + \mathcal{A}(\widehat{\nabla} f, \widehat{\nabla} f) + 2 R^{0 {\hat{i}}}\widehat{\nabla}_{\hat{i}} f- \dfrac{1}{2} \nabla_0 R \\
&- HR_{00} +  \alpha \mathcal{A} (\widehat{\nabla} \phi, \widehat{\nabla} \phi)\Big)e^{-f} dA.
\end{align*}
In particular, if both $\big(\!R_{{\hat{i}}{\hat{j}}} \!+\! \nabla_{\hat{i}}\nabla_{\hat{j}}f\!- \alpha \gamma_{\alpha\beta} \nabla_{\hat{i}} \phi^{\alpha} \nabla_{\hat{j}} \phi^{\beta}\big)|_{\partial M}$ and $\big(\!R_{{\hat{i}}0} \!+\! \nabla_{\hat{i}} \nabla_0 f\big)|_{\partial M}$ vanish, then the boundary integrand vanishes.
\end{corollary}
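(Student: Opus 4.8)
The plan is to take the formula of Proposition~\ref{alm:mon} as the starting point and perform two substitutions: first, replace the intrinsic Laplacian $\widehat{\Delta}H$ appearing in its boundary integrand by the time derivative $\frac{\partial H}{\partial t}$ via the evolution equation~\eqref{List_mean_evolution} of Proposition~\ref{Gradient_formulate}; second, reshuffle the remaining ambient--curvature terms using the contracted second Bianchi identity. No new analytic input is needed; everything is an algebraic rearrangement of identities already recorded in the excerpt.

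For the first step I would solve~\eqref{List_mean_evolution} for $\widehat{\Delta}H$, i.e. write
$\widehat{\Delta}H = \frac{\partial H}{\partial t} + \langle\widehat{\nabla}f,\widehat{\nabla}H\rangle - 2\mathcal{A}^{\hat{i}\hat{j}}R_{\hat{i}\hat{j}} - \mathcal{A}^{\hat{i}\hat{j}}\mathcal{A}_{\hat{i}\hat{j}}H - \nabla_0 R_{00} + 2\alpha\mathcal{A}(\widehat{\nabla}\phi,\widehat{\nabla}\phi)$,
and substitute this into the boundary integrand of Proposition~\ref{alm:mon}. The two occurrences of $\mathcal{A}^{\hat{i}\hat{j}}\mathcal{A}_{\hat{i}\hat{j}}H$ cancel, exactly one copy of $\langle\widehat{\nabla}f,\widehat{\nabla}H\rangle$ survives, and after collecting the $\mathcal{A}^{\hat{i}\hat{j}}R_{\hat{i}\hat{j}}$ and $\alpha\mathcal{A}(\widehat{\nabla}\phi,\widehat{\nabla}\phi)$ contributions the boundary integrand takes the form $\frac{\partial H}{\partial t} - \langle\widehat{\nabla}f,\widehat{\nabla}H\rangle + \mathcal{A}(\widehat{\nabla}f,\widehat{\nabla}f) + 2R^{0\hat{i}}\widehat{\nabla}_{\hat{i}}f + \alpha\mathcal{A}(\widehat{\nabla}\phi,\widehat{\nabla}\phi) - \mathcal{A}^{\hat{i}\hat{j}}R_{\hat{i}\hat{j}} - \nabla_0 R_{00} - \widehat{\nabla}_{\hat{i}}R^{0\hat{i}}$.

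The second step is to identify the last three curvature terms with $-\tfrac12\nabla_0 R - HR_{00}$. Here I would invoke the contracted Bianchi identity $\nabla^l R_{kl}=\tfrac12\nabla_k R$ with $k=0$; in the boundary--adapted coordinates of Section~\ref{Sec:preliminaries} (where $g^{00}=1$ and $g^{0\hat{i}}=0$ on $\partial M$) the ambient divergence splits as $\tfrac12\nabla_0 R = \nabla_0 R_{00} + g^{\hat{i}\hat{j}}\nabla_{\hat{i}}R_{\hat{j}0}$, and then the tangential/normal conversion $\nabla_{\hat{i}}R_{\hat{j}0} = \widehat{\nabla}_{\hat{i}}R_{\hat{j}0} - \mathcal{A}_{\hat{i}\hat{j}}R_{00} + \mathcal{A}^{\hat{k}}{}_{\hat{i}}R_{\hat{j}\hat{k}}$ (the identity quoted just before Proposition~\ref{Gradient_formulate}) gives $\tfrac12\nabla_0 R = \nabla_0 R_{00} + \widehat{\nabla}_{\hat{i}}R^{0\hat{i}} - HR_{00} + \mathcal{A}^{\hat{i}\hat{j}}R_{\hat{i}\hat{j}}$. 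Solving for $-\widehat{\nabla}_{\hat{i}}R^{0\hat{i}}$ and inserting the result into the boundary integrand produces precisely the expression in the statement, and hence the claimed formula for $\frac{d}{dt}\mathcal{F}^{\alpha}_\infty$.

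Finally, the ``in particular'' clause is immediate: all the manipulations above are pointwise identities on $\partial M$, so the new boundary integrand agrees pointwise with that of Proposition~\ref{alm:mon}, which by equation~\eqref{vanishes} equals $\mathcal{A}^{\hat{i}\hat{j}}\big(R_{\hat{i}\hat{j}}+\nabla_{\hat{i}}\nabla_{\hat{j}}f-\alpha\gamma_{\alpha\beta}\nabla_{\hat{i}}\phi^{\alpha}\nabla_{\hat{j}}\phi^{\beta}\big)$ up to a divergence term in $R^{\hat{i}0}+\nabla^{\hat{i}}\nabla^0 f$; the vanishing of $(R_{\hat{i}\hat{j}}+\nabla_{\hat{i}}\nabla_{\hat{j}}f-\alpha\gamma_{\alpha\beta}\nabla_{\hat{i}}\phi^{\alpha}\nabla_{\hat{j}}\phi^{\beta})|_{\partial M}$ and $(R_{\hat{i}0}+\nabla_{\hat{i}}\nabla_0 f)|_{\partial M}$ then forces the boundary integrand to vanish. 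The only point needing care — scarcely an obstacle — is the bookkeeping of the $\nabla$-versus-$\widehat{\nabla}$ corrections and of the index raising along $\partial M$, so that every second--fundamental--form term cancels as indicated; beyond that, the argument is purely computational.
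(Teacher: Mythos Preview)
Your proposal is correct and follows essentially the same route as the paper: substitute~\eqref{List_mean_evolution} into the boundary integrand of Proposition~\ref{alm:mon} to trade $\widehat{\Delta}H$ for $\partial H/\partial t$, then use the contracted Bianchi identity together with $\nabla_{\hat{i}} R_{\hat{j}0} = \widehat{\nabla}_{\hat{i}} R_{\hat{j}0} - \mathcal{A}_{\hat{i}\hat{j}}R_{00} + \mathcal{A}^{\hat{k}}{}_{\hat{i}}R_{\hat{j}\hat{k}}$ to rewrite the remaining curvature terms, and deduce the ``in particular'' clause by noting the pointwise equality of the two boundary integrands and invoking the corresponding clause of Proposition~\ref{alm:mon}.
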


\begin{proof}
From equation~\eqref{List_mean_evolution} of Proposition~\ref{Gradient_formulate}, the boundary integrand term of Proposition~\ref{alm:mon} can be rewritten as
\begin{align*}
&\widehat{\Delta} H - 2 \langle \widehat{\nabla} f,  \widehat{\nabla} H \rangle + \mathcal{A}( \widehat{\nabla} f,  \widehat{\nabla} f) + \mathcal{A}^{{\hat{i}}{\hat{j}}} \mathcal{A}_{{\hat{i}}{\hat{j}}} H + \mathcal{A}^{{\hat{i}}{\hat{j}}}R_{{\hat{i}}{\hat{j}}} + 2 R^{0 {\hat{i}}}  \widehat{\nabla}_{\hat{i}} f -  \widehat{\nabla}_{\hat{i}} R^{0{\hat{i}}}\\
&\quad - \alpha\mathcal{A}(\widehat{\nabla} \phi, \widehat{\nabla} \phi )\\
& = \dfrac{\partial H}{\partial t} -  \langle \widehat{\nabla} f,  \widehat{\nabla} H \rangle + \mathcal{A}( \widehat{\nabla} f,  \widehat{\nabla} f)  - \mathcal{A}^{{\hat{i}}{\hat{j}}}R_{{\hat{i}}{\hat{j}}} + 2 R^{0 {\hat{i}}}  \widehat{\nabla}_{\hat{i}} f -  \widehat{\nabla}_{\hat{i}} R^{0{\hat{i}}}  - \nabla_0 R_{00} \\
&\quad  + \alpha \mathcal{A}(\widehat{\nabla} \phi, \widehat{\nabla} \phi).
\end{align*}
Contracted Bianchi Identity and the fact that $\nabla_{\hat{i}} R_{{\hat{j}}0} = \widehat{\nabla}_{\hat{i}} R_{{\hat{j}}0} - \mathcal{A}_{{\hat{i}}{\hat{j}}}R_{00} + \mathcal{A}^k\!_{{\hat{i}}}R_{{\hat{j}}k}$ imply
\begin{align*}
\dfrac{1}{2} \nabla_0 R = \nabla_{\hat{i}} R^{{\hat{i}}0} + \nabla_0 R_{00} = \widehat{\nabla}_{\hat{i}} R^{{\hat{i}}0}  - HR_{00} + \mathcal{A}^{{\hat{i}}{\hat{j}}}R_{{\hat{i}}{\hat{j}}} + \nabla_0 R_{00}.
\end{align*}
The main result of the corollary follows from these two latter equations. If, in addition, both $R_{{\hat{i}}{\hat{j}}} + \nabla_{\hat{i}}\nabla_{\hat{j}}f- \alpha \gamma_{\alpha\beta} \nabla_{\hat{i}} \phi^{\alpha} \nabla_{\hat{j}} \phi^{\beta}$ and $R_{{\hat{i}}0} + \nabla_{\hat{i}} \nabla_0 f$ vanish on $\partial M$, then by Proposition~\ref{alm:mon} the integrand of $\partial M$, namely
\begin{align*}
\dfrac{\partial H}{\partial t}  - \langle \widehat{\nabla} f, \widehat{\nabla} H\rangle + \mathcal{A}(\widehat{\nabla} f, \widehat{\nabla} f) + 2 R^{0 {\hat{i}}}\widehat{\nabla}_{\hat{i}} f- \dfrac{1}{2} \nabla_0 R - HR_{00} +  \alpha \mathcal{A} (\widehat{\nabla} \phi, \widehat{\nabla} \phi)
\end{align*}
vanishes.
\end{proof}

\section{\bf Hypersurfaces in the Ricci flow coupled with harmonic map heat flow background} \label{se:Ricci}

In this section, we prove Theorems~\ref{principal_theorem} and \ref{Huisken_monotonicity}. For this, we shall need the following.

\begin{proposition}\label{mean_curv_flow_in_a_mod}
Let $M$ be an $m$-dimensional smooth manifold. Suppose $\mathscr F:=\{\Sigma_t\,;\, t\in[0,T)\}$ is a mean curvature flow in the $(g(t),\phi(t))$-$(RH)_\alpha$ flow on $M$ which satisfies $\nabla_0 \phi=0$ on $\Sigma_0,$ where $e_0$ is the  unit normal vector field on $\Sigma_0.$ Then, the following evolution equations hold 
\begin{align}
\frac{\partial}{\partial t}g_{{\hat{i}}{\hat{j}}} &= - 2\big(R_{{\hat{i}}{\hat{j}}}  - \alpha \gamma_{\alpha\beta}\widehat{\nabla}_{\hat{i}} \phi^{\alpha} \widehat{\nabla}_{\hat{j}} \phi^{\beta}\big)-2H\mathcal{A}_{{\hat{i}}{\hat{j}}},\label{List:together:mean:curv}\\
\frac{\partial}{\partial t}\phi &=  \tau_{\widehat g,\gamma} \phi + \nabla_0\nabla_0 \phi,\label{List:secondEq}\\
\frac{\partial}{\partial t}\mathcal{A}_{{\hat{i}}{\hat{j}}} &=
(\widehat{\Delta} \mathcal{A})_{{\hat{i}}{\hat{j}}}-\mathcal{A}^{\hat{k}}\!_{\hat{i}} R^{\hat{l}}_{{\hat{k}}{\hat{l}}{\hat{j}}}-\mathcal{A}^{\hat{k}}\!_{\hat{j}} R^{\hat{l}}_{{\hat{k}}{\hat{l}}{\hat{i}}}+2\mathcal{A}^{{\hat{k}}{\hat{l}}} R_{{\hat{k}}{\hat{i}}{\hat{l}}{\hat{j}}}
-2H  \mathcal{A}_{{\hat{i}}{\hat{k}}}\mathcal{A}^{\hat{k}}\!_{\hat{j}}  \label{sec_fund_variational}\\
&\quad + \mathcal{A}^{{\hat{k}}{\hat{l}}}\mathcal{A}_{{\hat{k}}{\hat{l}}}\mathcal{A}_{{\hat{i}}{\hat{j}}} + \nabla_0 R_{0 {\hat{i}} 0 {\hat{j}}} \nonumber
\end{align}
and
\begin{align}\label{mean_curv_background}
\dfrac{\partial}{\partial t}H = \widehat{\Delta}H + 2 \mathcal{A}^{ij}R_{ij} + \mathcal{A}^{ij}\mathcal{A}_{ij}H
+ \nabla_0 R_{0 0} -2\alpha \mathcal{A}(\widehat{\nabla}\phi, \widehat{\nabla}\phi).
\end{align}
\end{proposition}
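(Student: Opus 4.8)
The plan is to derive Proposition~\ref{mean_curv_flow_in_a_mod} from the computations already performed for the modified $(RH)_\alpha$ flow in Proposition~\ref{Gradient_formulate}, by exploiting the tangential‑diffeomorphism invariance of mean curvature flow. Given a mean curvature flow $\mathscr F=\{\Sigma_t\}$ in the $(g(t),\phi(t))$-$(RH)_\alpha$ flow background, I would work in a collar neighbourhood of the $\Sigma_t$'s and choose a smooth potential $f(t)$ with $e_0 f=-H$ along each $\Sigma_t$ (so that $H+e_0 f=0$), evolving by an equation of type~\eqref{weighted_pr_meas}, together with the one‑parameter family of diffeomorphisms $\chi_t$ (with $\chi_0=\operatorname{Id}$) generated by $-\nabla_{g(t)}f$. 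Then $\bar g(t):=\chi_t^*g(t)$ and $\bar\phi(t):=\phi(t)\circ\chi_t$ solve the modified $(RH)_\alpha$ flow~\eqref{grad:form:eq1}–\eqref{grad:form:eq2}, since $\mathcal L_{-\nabla f}g=-2\nabla^2 f$ turns $-2\operatorname{Ric}+2\alpha\nabla\phi\otimes\nabla\phi$ into $-2(\operatorname{Ric}+\nabla^2 f-\alpha\nabla\phi\otimes\nabla\phi)$; the hypersurface $\Sigma:=\chi_t^{-1}(\Sigma_t)$ is then \emph{static} (up to a reparametrisation of $\Sigma$), because the normal part of $-\nabla f$ along $\Sigma_t$ is $-(e_0 f)e_0=He_0$, which exactly cancels the mean curvature velocity, while the tangential part $-\widehat\nabla f$ is a reparametrisation. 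One checks that the constraints $H+e_0 f=0$ and $\nabla_0\phi=0$ are preserved along this normalised flow, so $\Sigma$ plays the role of the Neumann boundary in Section~\ref{MRFCHMHF}, and Proposition~\ref{Gradient_formulate} applies verbatim to $(\bar g(t),\bar\phi(t))$ and $\Sigma$, yielding~\eqref{flow_with_mean_curv}–\eqref{List_mean_evolution} for $\bar g_{\hat i\hat j},\bar\phi,\bar{\mathcal A}_{\hat i\hat j},\bar H$.

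The second step is to transport these identities back along $\chi_t$. Since each $\chi_t\colon(\Sigma,\bar g(t))\to(\Sigma_t,g(t))$ is an isometry of induced metrics, the geometric quantities on $\Sigma_t$ are the $\chi_t$‑pushforwards of those on $\Sigma$; differentiating in $t$ produces an extra Lie‑derivative term along the tangential part $-\widehat\nabla f$ of the generator, which removes precisely every occurrence of $\mathcal L_{\widehat\nabla f}$ — equivalently the $\langle\widehat\nabla f,\widehat\nabla H\rangle$ and $\mathcal A(\widehat\nabla f,\widehat\nabla f)$ contributions — in~\eqref{flow_with_mean_curv}–\eqref{List_mean_evolution}. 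What survives is exactly~\eqref{List:together:mean:curv}–\eqref{mean_curv_background}; indeed, deleting $\mathcal L_{\widehat\nabla f}$ from~\eqref{flow_with_mean_curv}, \eqref{List_heat_equation}, \eqref{sec_fund_evolution}, \eqref{List_mean_evolution} reproduces~\eqref{List:together:mean:curv}, \eqref{List:secondEq}, \eqref{sec_fund_variational}, \eqref{mean_curv_background} term by term.

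As an alternative (and as an independent check) the proposition can be proved by direct computation mirroring the proof of Proposition~\ref{Gradient_formulate}: equations~\eqref{List:together:mean:curv} and~\eqref{List:secondEq} follow quickly by splitting the time derivative of $x_t^*g(t)$ and of $\phi\circ x_t$ into an ``ambient'' part (governed by the $(RH)_\alpha$ flow) and a ``motion'' part (governed by $\partial_t x=He$), together with the Gauss‑type splitting $\Delta_g\phi^\lambda=\widehat\Delta_{\widehat g}\phi^\lambda+\nabla_0\nabla_0\phi^\lambda$ valid because $\nabla_0\phi=0$; equation~\eqref{sec_fund_variational} is obtained from the variation formula $\delta\mathcal A_{\hat i\hat j}=\tfrac12(\nabla_{\hat i}h_{\hat j0}+\nabla_{\hat j}h_{\hat i0}-\nabla_0 h_{\hat i\hat j})+\tfrac12 h_{00}\mathcal A_{\hat i\hat j}$ applied to the ambient part with $h_{ij}=-2R_{ij}+2\alpha\gamma_{\alpha\beta}\nabla_i\phi^\alpha\nabla_j\phi^\beta$, plus Huisken's classical evolution of $\mathcal A$ under the motion part, commutation of covariant derivatives, Codazzi–Mainardi, and Simons' identity~\eqref{Simons_identity}; and~\eqref{mean_curv_background} follows by tracing~\eqref{sec_fund_variational} against~\eqref{List:together:mean:curv}, the $-2\alpha\mathcal A(\widehat\nabla\phi,\widehat\nabla\phi)$ term coming from $\partial_t g^{\hat i\hat j}$.

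The main obstacle, in either route, is the second fundamental form equation~\eqref{sec_fund_variational}: in the direct route one must correctly bookkeep the interaction between the moving hypersurface and the evolving ambient metric and then collapse the resulting ambient‑curvature terms via Simons' identity, exactly as in Proposition~\ref{Gradient_formulate}; in the reparametrisation route one must justify the construction of $f$ and $\chi_t$ and, crucially, verify that $H+e_0 f=0$ and $\nabla_0\phi=0$ persist along the normalised flow so that Proposition~\ref{Gradient_formulate} is genuinely applicable — this is the analogue here of the Neumann boundary conditions being preserved by the modified $(RH)_\alpha$ flow, and it is also what links the construction to the hypothesis $e_0 u=Hu$ in Theorem~\ref{principal_theorem}.
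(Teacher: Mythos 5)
Your first route is essentially the paper's own proof: the authors produce the potential by solving the conjugate heat equation $\square^{*}e^{-f}=0$ backwards in time on the domains $X_t$ bounded by $\Sigma_t$ with the boundary condition $e_t u=H u$ (so $H+e_0 f=0$ holds by construction of the parabolic boundary-value problem rather than by a separate preservation argument), pull everything back by the flow of $-\nabla_{g(t)}f(t)$ so that the hypersurface becomes static and $(\widetilde g,\widetilde\phi,\widetilde f)$ satisfies the modified $(RH)_\alpha$ flow of Section~\ref{MRFCHMHF}, apply Proposition~\ref{Gradient_formulate}, and push forward, the reparametrisation Lie derivative cancelling exactly the $\mathcal L_{\widehat\nabla f}$ and $\langle\widehat\nabla f,\widehat\nabla H\rangle$ terms, finally remarking that the computation is local so the assumption that $\Sigma_t$ bounds a compact domain can be dropped. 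Your proposal is correct and follows this same scheme (your direct-computation alternative is only a back-up), with only cosmetic differences such as working in a collar and being loose about which evolution equation $f$ itself satisfies before pullback.
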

\begin{proof}
First, assume $\Sigma_t=\partial X_t$ with each $X_t$ compact. Given an interval $[a, b]\subset [0,T)$ and the MCF of $\Sigma$ in the $(g(t),\phi(t))_{t \in[a, b]}-(RH)_\alpha$ flow background with $\nabla_a \phi =0$ on $\Sigma = \partial X_a.$ We can find a positive solution $u(t)=
e^{-f(t)}$ for
\begin{align}\label{assump_principal_thm}
\left\{
\begin{array}{rllcl}
\square^{*}_{g(t)}u &=&0   &\mbox{in}&  \bigcup_{t\in [a,b]}(X_t \times \{t\}) \subset M\times [a, b],\\[1ex]
e_t u &=&H_{g(t)}u   &\mbox{on}&  \bigcup_{t\in [a,b]}(\partial X_t \times \{t\}),
\end{array}
\right.
\end{align}
by solving it backwards in time from $t = b,$ where $\square^{*}_{g(t)}$ is defined as in~\eqref{adjoint:square}. Indeed, choosing diffeomorphisms from $r_t: X_a\to X_t$, we reduce the problem
of solving~\eqref{assump_principal_thm} to a parabolic equation on a fixed domain. For it, take $\widetilde g(t) =r_t^*g(t),$ $\widetilde \phi(t) =r_t^*\phi(t),$  $\widetilde f(t) =r_t^*f(t)$ and  $\widetilde u(t) =r_t^*u(t)$, it is straightforward to compute that 
\begin{align}
\label{Naza'sassumptionn}
\left\{
\begin{array}{rllcc}
 \square^{*}_{\widetilde{g}(t)}\widetilde u +\Big\langle \nabla_{\widetilde g(t)} \widetilde u, \dfrac{\partial r_t}{\partial t} \Big\rangle &=&0  &\mbox{in}& X_a \times[a, b],\\[2ex]
\widetilde e_t\widetilde u &=& H_{\widetilde g(t)}\widetilde u   &\mbox{on}&  \partial X_a \times[a, b]
\end{array}
\right.
\end{align}
which is equivalent to \eqref{assump_principal_thm}. Now, by using $s = b - t,$ we have that \eqref{Naza'sassumptionn} is equivalent to the following parabolic equation 
\begin{align}\label{assump_principal_thm*-eq}
\left\{\begin{array}{rllrl}\dfrac{\partial}{\partial s} \widetilde u(s) \!\!\!\!\!&=&\!\!\!\!\!  \Delta_{\widetilde g} \widetilde u  - R_{\widetilde g}\widetilde u +\alpha |\nabla_{\widetilde g} \widetilde \phi|^2 \widetilde u + \Big\langle \nabla_{\widetilde g} \widetilde u, \dfrac{\partial r_t}{\partial s}\Big\rangle   \!\!\!\!&\mbox{in}&\!\!\!\!   X_a \times[a, b],\\[1ex]\widetilde e_s\widetilde  u \!\!\!\!\!&=&\!\!\!\!\!H_{\widetilde g}\widetilde 
 u  &\mbox{on}&\!\!\!\!  \partial X_a \times [a,b].
 \end{array}\right.
\end{align}
It guarantees the existence of a solution $u(t) = e^{-f(t)}$ for~\eqref{assump_principal_thm}.

Thus, we can take a one-parameter family of
diffeomorphisms $\{\psi_t\}_{t \in [a, b]}$ generated by $\{-\nabla_{g(t)} f(t)\}_{t \in [a, b]}$,
with $\psi_a = \operatorname{Id}$. Then $\psi_t(X_a) = X_t$ for all $t.$
By setting $\widetilde{g}(t) =\psi_t^*g(t)$, $\widetilde{\phi}(t) =\psi_t^*\phi(t),$ $\widetilde{f}(t) =\psi_t^*f(t)$ and $\widetilde\gamma (t) = \psi_t^*\gamma(t),$
we have that $\widetilde{g}(t)$, $\widetilde{\phi}(t),$  $\widetilde{f}(t)$ and $\widetilde\gamma(t)$  are defined on $X_a$.
We claim that
\begin{align}\label{modified:RH:flow:hypersurface}
\left\{
\begin{array}{lll}
\dfrac{\partial }{\partial t} \widetilde{g}_{ij}&=&
-2 \big(\widetilde{R}_{ij} +  \widetilde{\nabla}_i \widetilde\nabla_j \widetilde{f} 
- \alpha\widetilde\gamma_{\alpha\beta} \widetilde{\nabla}_i \widetilde{\phi}^{\alpha}  \widetilde{\nabla}_j \widetilde{\phi}^{\beta}\big),\\[1ex]
\dfrac{\partial }{\partial t} \widetilde{\phi} &=& \tau_{\widetilde{g},\gamma} \widetilde{\phi}
- \langle \widetilde{\nabla} \widetilde{\phi} , \widetilde{\nabla}\widetilde{f} \rangle_{\widetilde{g}}
\end{array}
\right.
\end{align}
and
\begin{align}\label{backward:heat:equation:hypersurface}
\frac{\partial}{\partial t}\widetilde{f} =
- \Delta_{\widetilde{g}} \widetilde{f} - R_{\widetilde{g}}  + \alpha |\widetilde{\nabla} \widetilde{\phi}|_{\widetilde{g}}^2
\end{align}
in $ X_a \times [a,b]$ with $H_{\widetilde g} + e_af = 0$ and $\nabla_a \phi = 0$ on $ \partial X_a= \Sigma.$
Indeed,  to prove \eqref{modified:RH:flow:hypersurface}, we compute
\begin{align*}
\frac{\partial }{\partial t}\widetilde{g}_{ij} &= \psi^*_t \Big(\frac{\partial }{\partial t} g_{ij}\Big) + \psi^*_t\Big(\mathcal{L}_{\frac{d}{dt}\psi_t}g\Big)_{ij}\\
&=\psi^*_t \Big( -2(R_{ij}  - \alpha\gamma_{\alpha\beta} \nabla_i \phi^{\alpha} \nabla_j \phi^{\beta})\Big) - \psi^*_t\Big(\mathcal{L}_{\big(\nabla_{g(t)} f(t)\big)}g\Big)_{ij}\\
&= -2 \big(\widetilde{R}_{ij} +  \widetilde{\nabla}_i\widetilde\nabla_j\widetilde{f}  - \alpha\widetilde\gamma_{\alpha\beta} \widetilde{\nabla}_i \widetilde{\phi}^{\alpha}  \widetilde{\nabla}_j \widetilde{\phi}^{\beta}\big)
\end{align*}
and
\begin{align*}
\frac{\partial }{\partial t}\widetilde{\phi} &= \psi^*_t \Big(\frac{\partial }{\partial t} \phi\Big) + \psi^*_t\mathcal{L}_{\frac{d}{dt} \psi_t}\phi \\
&= \psi^*_t \Big(\tau_{g,\gamma} \phi\Big) - \psi^*_t\mathcal{L}_{\big(\nabla_{g(t)} f(t)\big)} \phi \\
&=\tau_{\widetilde{g},\gamma} \widetilde{\phi}  - \langle \widetilde{\nabla} \widetilde{\phi} , \widetilde{\nabla}\widetilde{f} \rangle_{\widetilde{g}}.
\end{align*}
To prove~\eqref{backward:heat:equation:hypersurface}, we use that $\Delta u = (|\nabla f|^2 - \Delta f) e^{-f}$ and~\eqref{assump_principal_thm} to obtain
\begin{align*}
\frac{\partial }{\partial t}\widetilde{f} &= \psi^*_t \Big(\frac{\partial }{\partial t} f\Big) + \psi^*_t\mathcal{L}_{\frac{d}{dt} \psi_t}f \\
&= \psi^*_t \Big( |\nabla f|^2 - \Delta f - R  + \alpha |\nabla \phi|^2\Big) - \psi^*_t\mathcal{L}_{\big(\nabla_{g(t)} f(t)\big)} f \\
&=  - \Delta_{\widetilde{g}} \widetilde{f} - R_{\widetilde{g}}  + \alpha |\widetilde{\nabla} \widetilde{\phi}|_{\widetilde{g}}^2 .
\end{align*}
For the boundary conditions, it is enough to note that $e_tu=H_{g(t)}u$ implies $e_tf(t)+ H_{g(t)} = 0,$  and then $0 =\psi^*_te_tf(t)+\psi^*_t H_{g(t)} =e_a \widetilde f(t) +H_{\widetilde g(t)}.$  Thus, $(\widetilde{g}(t), \widetilde{\phi}(t))$
evolves by the modified $(RH)_{\alpha}$ flow in $X_a \times [a,b]$, thus, we can apply Proposition~\ref{Gradient_formulate}
for the compact smooth manifold $X_a$ with boundary $\partial X_a,$ from which we obtain 
\begin{align*}
\frac{\partial }{\partial t} g_{{\hat{i}}{\hat{j}}}  &=  \frac{\partial }{\partial t} \Big((\psi^*_t)^{-1} \psi^*_t g_{{\hat{i}}{\hat{j}}}\Big) = \frac{\partial }{\partial t}\Big((\psi^*_t)^{-1} \widetilde{g}_{{\hat{i}}{\hat{j}}}\Big)
=(\psi^*_t)^{-1} \Big(\frac{\partial }{\partial t} \widetilde{g}_{{\hat{i}}{\hat{j}}}  + \Big(\mathcal{L}_{\frac{d}{dt}\psi_t^{-1}}\widetilde{g}\Big)_{{\hat{i}}{\hat{j}}} \Big)\\
&= - 2(R_{{\hat{i}}{\hat{j}}}  - \alpha\gamma_{\alpha\beta} \widehat{\nabla}_{\hat{i}} \phi^{\alpha} \widehat{\nabla}_{\hat{j}} \phi^{\beta}) - 2 H \mathcal{A}_{{\hat{i}}{\hat{j}}},
\end{align*}
on $\Sigma_t$ that is~\eqref{List:together:mean:curv}. Likewise, by equation~\eqref{List_heat_equation} one has
\begin{align*}
\frac{\partial }{\partial t} \phi &= (\psi^*_t)^{-1} \Big( \frac{\partial }{\partial t} \widetilde{\phi} +\mathcal{L}_{\frac{d}{dt}\psi_t^{-1}}\widetilde{\phi} \Big)
=\widehat{\tau}_{g,\gamma} \phi + \nabla_0 \nabla_0 \phi,
\end{align*}
which is~\eqref{List:secondEq}. Next, equation~\eqref{sec_fund_evolution} implies
\begin{align*}
\frac{\partial }{\partial t}\mathcal{A}_{{\hat{i}}{\hat{j}}} &=
(\psi^*_t)^{-1}\Big(\frac{\partial}{\partial t}\widetilde{\mathcal{A}}_{{\hat{i}}{\hat{j}}}+ \Big(\mathcal{L}_{\frac{d}{dt}\psi_t^{-1}}\widetilde{\mathcal{A}}\Big)_{{\hat{i}}{\hat{j}}}\Big) \\
&= (\widehat{\Delta} \mathcal{A})_{{\hat{i}}{\hat{j}}}  - \mathcal{A}^{\hat{k}}\!_{\hat{i}} R^{\hat{l}}_{{\hat{k}}{\hat{l}}{\hat{j}}} - \mathcal{A}^{\hat{k}}\!_{\hat{j}} R^{\hat{l}}_{{\hat{k}}{\hat{l}}{\hat{i}}}
+2 \mathcal{A}^{{\hat{k}}{\hat{l}}} R_{{\hat{k}}{\hat{i}}{\hat{l}}{\hat{j}}} -2H  \mathcal{A}_{{\hat{i}}{\hat{k}}}\mathcal{A}^{\hat{k}}\!_{\hat{j}}+\mathcal{A}^{{\hat{k}}l}\mathcal{A}_{{\hat{k}}{\hat{l}}}\mathcal{A}_{{\hat{i}}{\hat{j}}}\\
&\quad+ \nabla_0 R_{0 {\hat{i}} 0 {\hat{j}}}
\end{align*}
and from~\eqref{List_mean_evolution} we get
\begin{align*}
\frac{\partial }{\partial t} H &= (\psi^*_t)^{-1} \Big(\frac{\partial }{\partial t}H_{\widetilde{g}} + \mathcal{L}_{\frac{d}{dt} \psi_t^{-1}}H_{\widetilde{g}} \Big) \\
&= \widehat{\Delta}H + 2 \mathcal{A}^{{\hat{i}}{\hat{j}}}R_{{\hat{i}}{\hat{j}}} + \mathcal{A}^{{\hat{i}}{\hat{j}}}\mathcal{A}_{{\hat{i}}{\hat{j}}}H + \nabla_0 R_{0 0} -2\alpha \mathcal{A}(\widehat{\nabla}\phi, \widehat{\nabla}\phi).
\end{align*}
For finishing,  we observe that the result could be derived from a local calculation on $\Sigma_t,$ hence, it is also valid without the assumption that $\Sigma_t$ bounds a compact domain.
\end{proof}

\begin{remark}\label{without}
We  point out that~\eqref{List:together:mean:curv} hold  regardless the assumption $\nabla_0 \phi=0$ on $\Sigma_0$.
\end{remark}

\begin{remark} \label{rem:recover2}
If $M$ is the Euclidean space with its standard metric $g_0$, $g(t) = g_0$ and $\phi(t) = \phi$ is a constant,
then Eqs.~\eqref{List:together:mean:curv}, \eqref{sec_fund_variational} and \eqref{mean_curv_background}
are the same as in~\cite[Lem.~3.2, Thm.~3.4 and Cor.~3.5]{Huisken}, see also Mantegazza~\cite[Sect.~2.3]{Mantegazza}. Moreover, we recover Prop.~4 in Gomes and Hudson~\cite{Gomes_Hudson}, for $\phi \in C^\infty (M);$ and Prop.~4 in Lott~\cite{John_Lott}, for $\phi$ constant.
\end{remark}

\begin{proof}[\bf Proof of Theorem~\ref{principal_theorem}]
The hypotheses on $\{{\partial M}_t\,;\,t\in[0,T)\}$ and on $u$ allow us to use $\widetilde{g}(t)$, $\widetilde{\phi}(t)$ and $\widetilde{f}(t)$ on $M$ as in the proof of Proposition~\ref{mean_curv_flow_in_a_mod}.  In this way, the result follows immediately from Corollary~\ref{key_prop_monotonicity} and the fact that the identity 
$$\frac{\partial}{\partial t} H_{\widetilde{g}}=
\dfrac{\partial}{\partial t}H_g-\langle \widehat{\nabla} f,\widehat{\nabla} H\rangle$$ 
holds on $\partial M_t$ for all $t\in[0,T).$ 
\end{proof}

\begin{remark} \label{rem:recover3}
Theorem~\ref{principal_theorem} extends Theorem~1 in \cite{Gomes_Hudson}, which extends Theorem~1 in \cite{John_Lott}. Also, when $M$ is compact without boundary, it coincides with \cite[Eq.~(3.2)]{muller2012ricci}.
\end{remark}

We finalize this section by proving Theorem~\ref{Huisken_monotonicity}. First, we need to know how the area evolves under MCF in an $(RH)_\alpha$ flow background.

\begin{lemma}\label{Huisken:lem}
 Let $(\overline{g}(t),\overline{\phi}(t)),$ $\overline f$ and $\mathscr F:=\{\Sigma_t\}$ be as in the statement of Theorem~\ref{Huisken_monotonicity}. Then, the following equation holds on $\Sigma_t$ 
\begin{align*}
\dfrac{d}{dt}(d A_{\overline{g}}) = - \left(\overline{R}^i\!_i + H_{\overline{g}}^2 - \alpha|\widehat{\nabla}_{\overline{g}} \overline{\phi} |_{\overline{g}}^2\right) d A_{\overline{g}}.
\end{align*}
\end{lemma}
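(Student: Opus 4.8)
The plan is to compute $\frac{d}{dt}(dA_{\overline g})$ directly from the evolution of the induced metric on $\Sigma_t$ provided by Proposition~\ref{mean_curv_flow_in_a_mod}. Recall the standard fact that if $\overline g_{\hat i\hat j}(t)$ is a family of metrics on $\Sigma$ with $\frac{\partial}{\partial t}\overline g_{\hat i\hat j} = k_{\hat i\hat j}$, then $\frac{d}{dt}(dA_{\overline g}) = \tfrac12\,\overline g^{\hat i\hat j}k_{\hat i\hat j}\,dA_{\overline g}$. So the whole computation reduces to tracing the right-hand side of~\eqref{List:together:mean:curv}.

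First I would invoke Proposition~\ref{mean_curv_flow_in_a_mod} (using Remark~\ref{without}, so that no assumption on $\nabla_0\overline\phi$ is needed for this particular equation), which gives, on $\Sigma_t$,
\begin{align*}
\frac{\partial}{\partial t}\overline g_{\hat i\hat j} = -2\big(\overline R_{\hat i\hat j} - \alpha\,\overline\gamma_{\alpha\beta}\widehat\nabla_{\hat i}\overline\phi^{\alpha}\widehat\nabla_{\hat j}\overline\phi^{\beta}\big) - 2H_{\overline g}\mathcal A_{\hat i\hat j}.
\end{align*}
Contracting with $\tfrac12\,\overline g^{\hat i\hat j}$ yields
\begin{align*}
\frac{d}{dt}(dA_{\overline g}) = -\Big(\overline g^{\hat i\hat j}\overline R_{\hat i\hat j} - \alpha\,\overline g^{\hat i\hat j}\overline\gamma_{\alpha\beta}\widehat\nabla_{\hat i}\overline\phi^{\alpha}\widehat\nabla_{\hat j}\overline\phi^{\beta} + H_{\overline g}\,\overline g^{\hat i\hat j}\mathcal A_{\hat i\hat j}\Big)dA_{\overline g}.
\end{align*}
Then I identify the three traces: $\overline g^{\hat i\hat j}\overline R_{\hat i\hat j} = \overline R^i{}_i$ in the notation of the statement (the trace over the tangential indices of the ambient Ricci tensor, i.e. $\overline R^i{}_i$), $\overline g^{\hat i\hat j}\mathcal A_{\hat i\hat j} = H_{\overline g}$ so the last term is $H_{\overline g}^2$, and $\overline g^{\hat i\hat j}\overline\gamma_{\alpha\beta}\widehat\nabla_{\hat i}\overline\phi^{\alpha}\widehat\nabla_{\hat j}\overline\phi^{\beta} = |\widehat\nabla_{\overline g}\overline\phi|_{\overline g}^2$ by the definition of the induced inner product on $T^*\Sigma\otimes\overline\phi^*TN$. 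Substituting gives exactly
\begin{align*}
\frac{d}{dt}(dA_{\overline g}) = -\big(\overline R^i{}_i + H_{\overline g}^2 - \alpha|\widehat\nabla_{\overline g}\overline\phi|_{\overline g}^2\big)dA_{\overline g},
\end{align*}
which is the claim.

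The only real subtlety — and the step I would be most careful about — is bookkeeping of indices and notational conventions: making sure that $\overline R^i{}_i$ in the statement means the $\overline g$-trace of the ambient Ricci tensor restricted to directions tangent to $\Sigma_t$ (equivalently $\overline g^{\hat i\hat j}\overline R_{\hat i\hat j}$, not the full ambient scalar curvature), and that the convention for the sign of $\mathcal A$ and $H$ used in Proposition~\ref{mean_curv_flow_in_a_mod} is consistent with the one appearing here. There is no analytic obstacle; this is a one-line trace computation once the evolution equation~\eqref{List:together:mean:curv} is in hand, so the proof is genuinely short.
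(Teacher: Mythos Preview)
Your proof is correct and follows exactly the same route as the paper: apply the standard variation formula $\frac{d}{dt}(dA_{\overline g}) = \tfrac12\operatorname{tr}_{\overline g}\big(\frac{\partial}{\partial t}\overline g_{\hat i\hat j}\big)\,dA_{\overline g}$ together with equation~\eqref{List:together:mean:curv} from Proposition~\ref{mean_curv_flow_in_a_mod}, invoking Remark~\ref{without} to drop the $\nabla_0\overline\phi=0$ hypothesis. The paper's own proof is in fact just a two-line reference to these same ingredients, so your expanded version with the explicit trace identifications is if anything more detailed.
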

\begin{proof}
The lemma follows by using the well-known formula
\begin{align*}
\dfrac{d}{dt}(d A_{\overline{g}}) = \dfrac{1}{2}\mbox{tr}_{(\overline{g}_{ij}(t))} \Big( \dfrac{\partial}{\partial t} \overline{g}_{ij}\Big) dA_{\overline{g}}
\end{align*}
and equation~\eqref{List:together:mean:curv} in Proposition~\ref{mean_curv_flow_in_a_mod} (see also Remark~\ref{without}).
\end{proof}

\begin{proof}[\bf Proof of Theorem~\ref{Huisken_monotonicity}]
Lemma~\ref{Huisken:lem} and a straightforward computation  yield
\begin{align*}
\dfrac{d}{dt}\int_{\Sigma_t} e^{-\overline{f}} d A_{\overline{g}} &= - \int_{\Sigma_t} \Big(\frac{d}{dt}\overline{f} + \overline{R}^i\!_i + H_{\overline{g}}^2 - \alpha |\widehat{\nabla}_{\overline{g}} \overline{\phi} |_{\overline{g}}^2\Big) e^{-\overline{f} } d A_{\overline{g}}.
\end{align*}
By chain rule $\frac{d}{dt}\overline{f} = \frac{\partial}{\partial t}\overline{f} \frac{ dt}{ dt}  +  \overline g(t)(\nabla_{\overline g(t)}\overline{f}, \frac{\partial x}{\partial t})$ that implies
\begin{align*}
\dfrac{d}{dt}\int_{\Sigma_t} e^{-\overline{f}} d A_{\overline{g}}  &= - \int_{\Sigma_t} \Big(\dfrac{\partial}{\partial t}\overline{f} + H_{\overline{g}} e_t \overline{f}  + \overline{R}^i\!_i + H_{\overline{g}}^2 - \alpha |\widehat{\nabla}_{\overline{g}} \overline{\phi} |_{\overline{g}}^2\Big) e^{-\overline{f}} d A_{\overline{g}}.
\end{align*}

First, assume $(\overline{g}(t), \overline{\phi}(t))$ is a gradient steady soliton. In this case, we can take traces in the first equation 
of~\eqref{mod_grad_Ricci_soliton}  on $\Sigma_t$ to get
\begin{align*}
0=\overline{R}^i\!_i + \overline{\nabla}_i \overline{\nabla}^i \overline{f} -\alpha |\widehat{\nabla}_{\overline{g}} \overline{\phi}|_{\overline{g}}^2=\overline{R}^i\!_i + \widehat{\nabla}^i \widehat{\nabla}_i \overline{f} - H_{\overline{g}} e_t \overline{f} -\alpha |\widehat{\nabla}_{\overline{g}} \overline{\phi}|_{\overline{g}}^2.
\end{align*}
Then, using \eqref{self-solution}, we obtain
\begin{align*}
\dfrac{d}{dt}\int_{\Sigma_t} e^{-\overline{f}} d A_{\overline{g}}
&= - \int_{\Sigma_t} \Big(|\nabla_{\overline{g}} \overline{f}|_{\overline{g}}^2 - \widehat{\Delta}_{\overline{g}}\overline{f} +2 H_{\overline{g}} e_t \overline{f}  + H_{\overline{g}}^2 \Big)e^{-\overline{f}} d A_{\overline{g}} \\
&= - \int_{\Sigma_t} \Big( |\widehat{\nabla}_{\overline{g}} \overline{f}|_{\overline{g}}^2 +(e_t \overline{f})^2 - \widehat{\Delta}_{\overline{g}}\overline{f} + 2H_{\overline{g}} e_t \overline{f}  + H_{\overline{g}}^2\Big)e^{-\overline{f}} d A_{\overline{g}} \\
&= - \int_{\Sigma_t} \Big(H_{\overline{g}} +e_t \overline{f} \Big)^2 e^{-\overline{f}} d A_{\overline{g}},
\end{align*}
where in the second line we have used  the equality
$$\widehat{\Delta}_{\overline{g}} e^{- \overline{f}} = 
(|\widehat{\nabla}_{\overline{g}} \overline{f}|_{\overline{g}}^2 - 
\widehat{\Delta}_{\overline{g}}\overline{f})e^{-\overline{f}}$$ 
and Stokes' theorem. Since the boundary integrand on the right-hand side 
is nonnegative, we immediately have the result of the theorem for the steady case.

For the shrinking case, we claim that the function
$$(-\infty,T)\ni t\mapsto [4\pi(T - t)]^{-(m - 1 ) / 2}\int_{\Sigma_t} e^{-\overline{f}} dA_{\overline{g}}$$ 
is non-increasing during the flow. Indeed, as above, we take traces in the first equation 
of~\eqref{mod_grad_Ricci_soliton} on $\Sigma_t$ to obtain
\begin{align*}
 \dfrac{m - 1}{2(T - t)}=\overline{R}^i\!_i + 
\overline{\nabla}^i \overline{\nabla}_i \overline{f} -
\alpha |\widehat{\nabla}_{\overline{g}} \overline{\phi}|_{\overline{g}}^2 
= \overline{R}_i^i + \widehat{\nabla}^i \widehat{\nabla}_i \overline{f}- 
H_{\overline{g}} e_t \overline{f} -\alpha |\widehat{\nabla}_{\overline{g}} \overline{\phi}|_{\overline{g}}^2.
\end{align*}
Then,
\begin{align}\label{deriv_Huisken_shrk}
\nonumber &\dfrac{d}{dt}\Big([4\pi(T - t)]^{-(m - 1 ) / 2}\int_{\Sigma_t} e^{-\overline{f}} d A_{\overline{g}}\Big)\\
\nonumber &= - [4\pi(T - t)]^{-(m - 1 ) / 2}\int_{\Sigma_t} \Big( |\widehat{\nabla}_{\overline{g}} \overline{f}|_{\overline{g}}^2 +(e_t \overline{f})^2 - \nonumber \widehat{\Delta}_{\overline{g}}\overline{f} + 2H_{\overline{g}} e_t \overline{f}  + H_{\overline{g}}^2\\
&\quad+ \dfrac{m - 1}{2(T -t)}\Big)e^{-\overline{f}} d A_{\overline{g}} +\frac{m - 1 }{2} 4 \pi[4\pi(T - t)]^{-\frac{(m - 1 )}{2} - 1}\int_{\Sigma_t} e^{-\overline{f}} dA_{\overline{g}}\nonumber\\
&= -[4\pi(T - t)]^{-(m - 1 ) / 2} \int_{\Sigma_t} \Big(H_{\overline{g}} +e_t \overline{f} \Big)^2 e^{-\overline{f}} d A_{\overline{g}}.
\end{align}
This proves the claim and so the theorem for the shrinking case. Finally, in a similar way, one proves the expanding case.
\end{proof}

\begin{remark} \label{rem:recover5}
For the shrinking case in Theorem~\ref{Huisken_monotonicity},  we recover 
Huisken's monotonicity formula~\cite[Thm.~3.1]{Huisken1990}, 
by taking $M = \mathbb{R}^m$, $g_{ij}(\tau) = \delta_{ij}$, $\overline{f}(x, \tau) = |x|^2/4 \tau$ 
and $\overline{\phi}(\tau)= \phi$ to be a constant.
\end{remark}
\begin{remark} \label{rem:recover6}
We recover Huisken monotonicity-type formulas~\cite[Prop.~3.1]{magni2013flow} for hypersurface case of $M$ and~\cite[Prop.~8~and~Rmk.~5]{John_Lott}  by taking $\overline{\phi}(\tau)= \phi$ to be a constant. By taking $\overline \phi \in C^\infty(M)$ we recover~\cite[Thm.~2]{Gomes_Hudson}.
\end{remark}

\section{\bf Extension of Hamilton's differential Harnack expression}\label{Sec-EHDHE}
Here, we will see that the boundary integrand term of the time-derivative of $\mathcal F^{\alpha}_\infty$ provides an extension of Hamilton’s differential Harnack expression for mean curvature flow in Euclidean space to the more general context of mean curvature flow in the $(RH)_{\alpha}$ flow background.

Let $\mathscr F:=\{\Sigma_t\}$ be a family of mean curvature solitons in the $(\overline{g},\overline{\phi})-(RH)_{\alpha}$ flow background.
For the steady case, we have  
$\overline{R}_{{\hat{i}}{\hat{j}}} + \overline{\nabla}_{\hat{i}}\overline\nabla_{\hat{j}} \overline{f} - \alpha\gamma_{\alpha\beta} \overline{\nabla}_{\hat{i}} \overline{\phi}^{\alpha} \overline{\nabla}_{\hat{j}} \overline{\phi}^{\beta} 
= 0$ and  $\overline{R}_{{\hat{i}}0} + \overline{\nabla}_{\hat{i}}\overline\nabla_{0}\overline{f} - 
\alpha\gamma_{\alpha\beta} \overline{\nabla}_{\hat{i}} \overline{\phi}^{\alpha} \overline{\nabla}_0 \overline{\phi}^{\beta} =0$  on $\Sigma_t$. Then,
\begin{align}\label{soliton_restricted}
\overline{R}_{{\hat{i}}{\hat{j}}} + \widehat{\nabla}_{\hat{i}}\widehat\nabla_{\hat{j}} \overline{f} 
+ H_{\overline{g}} \mathcal{A}_{{\hat{i}}{\hat{j}}}- \alpha\gamma_{\alpha\beta}\widehat{\nabla}_{\hat{i}} \overline{\phi}^{\alpha}\widehat{\nabla}_{\hat{j}}\overline{\phi}^{\beta}=0,
\end{align}
and
\begin{align}\label{soliton_restricted2}
\overline{R}_{{\hat{i}}0} - \widehat{\nabla}_{\hat{i}} H_{\overline{g}} + \mathcal{A}^{\hat{k}}\!_{\hat{i}} \widehat{\nabla}_{\hat{k}} \overline{f} -\alpha\gamma_{\alpha\beta} \widehat{\nabla}_{\hat{i}} \overline{\phi}^{\alpha}\widehat{\nabla}_0 \overline{\phi}^{\beta}= 0 .
\end{align}

\begin{example}
For instance, consider $M = \mathbb{R}^m$, $\overline{g}(t)= \delta_{ij}$ and $\overline{\phi}(t)$ a constant, and let $L$ be a linear function on $\mathbb{R}^m$.  Defining $\overline{f} = L + t |\nabla L|^2$, we have that $\overline f$ satisfies~\eqref{self-solution}. Changing $\overline{f}$ to $-f$, equations~\eqref{soliton_restricted} and~\eqref{soliton_restricted2}  then become
\begin{align*}
\widehat{\nabla}_{\hat{i}}\widehat\nabla_{\hat{j}} \overline{f}-H \mathcal{A}_{{\hat{i}}{\hat{j}}}=0 \quad \hbox{and}\quad \widehat{\nabla}_{\hat{i}} H + \mathcal{A}^{\hat{k}}\!_{\hat{i}} \widehat{\nabla}_{\hat{k}} f = 0,
\end{align*}
respectively, which appear in~\cite[p.~219]{Hamilton} as equations for a translating soliton.
\end{example}

Consider a bounded domain $\Omega$  with smooth boundary $\partial\Omega:=\Sigma$ in Euclidean space $\mathbb{R}^m$, and   take a solution $u=e^{-f}$  to the  conjugate heat
equation~\eqref{back:heat:eq} in $\Omega\times[0,T)$
with $e_0 u=Hu$ on $\Sigma$. If $\mathscr{F}:=\{\Sigma_t\,;\, t\in[0,T)\}$ is a
mean curvature flow in a $(g(t),\phi(t))-(RH)_{\alpha}$ flow background
with $g(t)$ Ricci flat and $\nabla_0\phi = 0$ on $\Sigma$, then the boundary
integrand in Theorem~\ref{principal_theorem} becomes
\begin{align}\label{type:Harnack}
&\mathcal{Z}(V) + \alpha \mathcal{A}(\widehat{\nabla} \phi, \widehat{\nabla} \phi),
\end{align}
where $V=-\widehat{\nabla} f$ and $\mathcal{Z}(V):=\frac{\partial H}{\partial t}
+ 2\langle V, \widehat{\nabla} H\rangle + \mathcal{A}(V, V)$
is Hamilton’s differential Harnack expression for the case of mean
curvature flow in Euclidean space, which vanishes in the particular case of translating solitons (see~\cite[Def.~4.1 and Lem.~3.2]{Hamilton}).

The next result suggests an extension $\mathcal{Z}^{\alpha}_{\overline{g},\overline{\phi}}$ of $\mathcal{Z}$ for the more general case of MCF in the $(RH)_\alpha$ flow background, whose characterization of nullity should be on the steady case. For this, we observe that, if we consider a steady $(\overline{g}(t), \overline{\phi}(t))-(RH)_\alpha$ flow background on a smooth manifold $M$ with potential function $\overline{f}$, and $\Sigma$ is a mean curvature soliton at $t=0$, then its ensuing mean curvature flow $\{\Sigma_t \}$ consists of mean curvature solitons, and $\{\Sigma_t \}$ differs from $\{\psi_t(\Sigma)\}$ by hypersurface diffeomorphisms. In Section~\ref{sect:grad:sol}, we give a more general description that includes the shrinking and expanding soliton cases.

\begin{corollary}\label{type-Harnack-background}
Let $M$ be an $m$-dimensional smooth manifold and $(\overline{g}(t), \overline{\phi}(t))$ a gradient steady soliton on $M\times[0,T)$ with potential function $\overline{f}.$ Assume that  $\mathscr F:=\{\Sigma_t\,;\, t\in[0,T)\}$ is a mean curvature flow in the $(\overline g,\overline \phi)-(RH)_{\alpha}$ flow background which satisfies $H + e_0 f = 0$ and $\nabla_0\phi=0$ on $\Sigma_0,$ where $e_0$ is the  unit normal vector field on $\Sigma_0.$ Under these conditions, the  identity
\begin{align*}
\mathcal{Z}(-\widehat{\nabla}_{\overline{g}}\overline{f})+
 2 \overline{R}^{0 {\hat{i}}}\widehat{\nabla}_{\hat{i}} \overline{f}- \dfrac{1}{2} \overline{\nabla}_0 \overline{R}
 - H_{\overline{g}}\overline{R}_{00}+  
 \alpha \mathcal{A}(\widehat{\nabla}_{\overline{g}} \overline{\phi}, \widehat{\nabla}_{\overline{g}} \overline{\phi}) = 0
\end{align*}
holds for all $t\in[0,T)$,  where $\mathcal{A}$ and $\widehat{\nabla}_{\overline{g}}$ are as in Theorem \ref{principal_theorem}.
\end{corollary}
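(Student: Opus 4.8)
The plan is to recognize that the quantity to be shown to vanish is precisely the boundary integrand of Theorem~\ref{principal_theorem} (equivalently, of Corollary~\ref{key_prop_monotonicity}) evaluated for $f=\overline f$, $\phi=\overline\phi$, and then to observe that, for a gradient steady soliton, this integrand is zero because the two ``defect tensors'' controlling it vanish along every $\Sigma_t$. Since $\mathscr F$ is a mean curvature flow in the $(\overline g,\overline\phi)$-$(RH)_\alpha$ flow background with $\nabla_0\overline\phi=0$ on $\Sigma_0$, Proposition~\ref{mean_curv_flow_in_a_mod} applies and yields the evolution equation \eqref{mean_curv_background} for the mean curvature $H$ of $\Sigma_t$ in $(M,\overline g(t))$; the $\partial H/\partial t$ occurring there is the genuine mean-curvature-flow derivative, which is exactly the one entering $\mathcal Z$.

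First I would write $\mathcal Z(-\widehat\nabla_{\overline g}\overline f)=\frac{\partial H}{\partial t}-2\langle\widehat\nabla\overline f,\widehat\nabla H\rangle+\mathcal A(\widehat\nabla\overline f,\widehat\nabla\overline f)$ and substitute \eqref{mean_curv_background} for $\partial H/\partial t$, so that the full expression in the statement becomes
\[
\widehat\Delta H-2\langle\widehat\nabla\overline f,\widehat\nabla H\rangle+\mathcal A(\widehat\nabla\overline f,\widehat\nabla\overline f)+\mathcal A^{\hat i\hat j}\mathcal A_{\hat i\hat j}H+2\mathcal A^{\hat i\hat j}\overline R_{\hat i\hat j}+\overline\nabla_0\overline R_{00}-\alpha\mathcal A(\widehat\nabla\overline\phi,\widehat\nabla\overline\phi)+2\overline R^{0\hat i}\widehat\nabla_{\hat i}\overline f-\tfrac12\overline\nabla_0\overline R-H\overline R_{00}.
\]
Next, exactly as in the proof of Corollary~\ref{key_prop_monotonicity}, I would use the contracted second Bianchi identity together with the tangential--normal splitting $\nabla_{\hat i}\overline R_{\hat j0}=\widehat\nabla_{\hat i}\overline R_{\hat j0}-\mathcal A_{\hat i\hat j}\overline R_{00}+\mathcal A^{\hat k}{}_{\hat i}\overline R_{\hat j\hat k}$ to replace $\tfrac12\overline\nabla_0\overline R$ by $\widehat\nabla_{\hat i}\overline R^{0\hat i}-H\overline R_{00}+\mathcal A^{\hat i\hat j}\overline R_{\hat i\hat j}+\overline\nabla_0\overline R_{00}$. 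After this the terms $\overline\nabla_0\overline R_{00}$ and $H\overline R_{00}$ cancel and the expression collapses, by identity \eqref{vanishes} (a pointwise consequence of Simons' identity \eqref{Simons_identity} and Bianchi, hence valid for $\overline f,\overline\phi$ restricted to $\Sigma_t$), to
\[
\mathcal A^{\hat i\hat j}\big(\overline R_{\hat i\hat j}+\nabla_{\hat i}\nabla_{\hat j}\overline f-\alpha\gamma_{\alpha\beta}\nabla_{\hat i}\overline\phi^\alpha\nabla_{\hat j}\overline\phi^\beta\big)-e^{\overline f}\,\widehat\nabla_{\hat i}\big((\overline R^{\hat i0}+\nabla^{\hat i}\nabla^0\overline f)e^{-\overline f}\big).
\]

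Finally I would close the argument using the soliton structure. Since $(\overline g(t),\overline\phi(t))$ is a gradient \emph{steady} soliton with potential $\overline f$, the first equation of \eqref{mod_grad_Ricci_soliton} with $c=0$ reads $\operatorname{Ric}_{\overline g}+\nabla^2_{\overline g}\overline f-\alpha\nabla\overline\phi\otimes\nabla\overline\phi=0$ on all of $M$. Restricting to $\Sigma_t$, its tangential $(\hat i\hat j)$-component kills the first bracket above, while its mixed $(\hat i0)$-component is $\overline R_{\hat i0}+\nabla_{\hat i}\nabla_0\overline f-\alpha\gamma_{\alpha\beta}\nabla_{\hat i}\overline\phi^\alpha\nabla_0\overline\phi^\beta=0$; since $\nabla_0\overline\phi=0$ this forces $\overline R_{\hat i0}+\nabla_{\hat i}\nabla_0\overline f=0$ on $\Sigma_t$, hence $\overline R^{\hat i0}+\nabla^{\hat i}\nabla^0\overline f=0$ there, so the second term vanishes as well. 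Both pieces being zero, the expression vanishes for every $t\in[0,T)$. (Equivalently, one may phrase the whole argument as verifying the hypotheses of the ``in particular'' clause of Corollary~\ref{key_prop_monotonicity}, respectively Proposition~\ref{alm:mon}.)

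I expect the only genuinely delicate point to be the middle step's bookkeeping: keeping the precise form of $\partial H/\partial t$ (the mean-curvature-flow one, as in \eqref{mean_curv_background}, not the modified-flow one), matching all index raisings and signs so that the output is literally the right-hand side of \eqref{vanishes}, and using $\nabla_0\overline\phi=0$ in exactly the right place to discard the coupling term in the mixed component of the soliton equation. Everything else is a direct invocation of Proposition~\ref{mean_curv_flow_in_a_mod}, identity \eqref{vanishes}, and the defining equations \eqref{mod_grad_Ricci_soliton} of a gradient steady soliton.
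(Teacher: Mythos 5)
Your computational outline is essentially the paper's argument in unpacked form: both reduce the stated expression to the two defect tensors $\overline R_{\hat i\hat j}+\nabla_{\hat i}\nabla_{\hat j}\overline f-\alpha\gamma_{\alpha\beta}\nabla_{\hat i}\overline\phi^{\alpha}\nabla_{\hat j}\overline\phi^{\beta}$ and $\overline R_{\hat i 0}+\nabla_{\hat i}\nabla_0\overline f$ via the boundary-integrand machinery (Proposition~\ref{alm:mon}, Corollary~\ref{key_prop_monotonicity}, identity \eqref{vanishes}), and then kill them with the steady soliton equation \eqref{mod_grad_Ricci_soliton} and $\nabla_0\overline\phi=0$; the paper packages this by pulling back along the diffeomorphisms generated by $-\nabla_{\overline g(t)}\overline f(t)$ (possible because $e^{-\overline f}$ solves the conjugate heat equation) and invoking the ``in particular'' clause of Corollary~\ref{key_prop_monotonicity} together with $\frac{\partial}{\partial t}H_{\widetilde g}=\frac{\partial}{\partial t}H_{\overline g}-\langle\widehat\nabla_{\overline g}\overline f,\widehat\nabla_{\overline g}H_{\overline g}\rangle$. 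Your bookkeeping (substituting \eqref{mean_curv_background} into $\mathcal Z$, the Bianchi rearrangement of $\tfrac12\overline\nabla_0\overline R$, the collapse to the two defect terms) is correct.

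The genuine gap is your justification for using \eqref{vanishes} and the mixed soliton component on $\Sigma_t$ for $t>0$. Identity \eqref{vanishes} is \emph{not} an unconditional pointwise consequence of Simons' identity and Bianchi: its derivation (Lott's Lemma~1, i.e.\ the substitutions $\nabla_{\hat i}\nabla_{\hat j}f=\widehat\nabla_{\hat i}\widehat\nabla_{\hat j}f+H\mathcal A_{\hat i\hat j}$ and $\nabla_{\hat j}\nabla_0 f=-\widehat\nabla_{\hat j}H+\mathcal A^{\hat k}{}_{\hat j}\widehat\nabla_{\hat k}f$ used in Proposition~\ref{Gradient_formulate}) requires $H+e_0 f=0$ on the hypersurface; that is the only way $H$, $\widehat\Delta H$ and $\widehat\nabla H$ enter its right-hand side. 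Similarly, your last step discards the coupling term of the mixed component using $\nabla_0\overline\phi=0$ on $\Sigma_t$. But the corollary hypothesizes $H+e_0 f=0$ and $\nabla_0\phi=0$ only on $\Sigma_0$; for $t>0$ you must first show these conditions propagate along the flow. This is precisely the input the paper supplies through the pull-back to the static picture as in Proposition~\ref{mean_curv_flow_in_a_mod} and the fact, discussed just before the corollary and established in Theorem~\ref{NazasCharacterization}, that in a steady background the ensuing mean curvature flow of a mean curvature soliton consists of mean curvature solitons, coinciding with $\psi_t(\Sigma_0)$ up to tangential reparametrization (whence both boundary conditions at time $t$ reduce to the hypotheses at $\Sigma_0$). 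Without this propagation step your evaluation at $t>0$ is unjustified; with it, your argument closes and agrees with the paper's.
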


\begin{proof}
If $(\overline{g}(t), \overline{\phi}(t))$ is a gradient steady soliton on $M \times [0,T)$, then the positive function $u = e^{-\overline{f}(t)}$ on $\bigcup_{t\in [0,T)}(X_t \times \{t\}) \subset M\times [0, T)$ satisfies the conjugated heat equation~\eqref{assump_principal_thm} with $e_0 u = Hu$ and $\nabla_0\phi=0$ on $\partial X_0=\Sigma_0$, where the boundary conditions follows from the assumptions on $\Sigma_0$. To see this, first observe that $\Delta_{\overline{g}} u = (|\nabla_{\overline{g}} \overline{f}|_{\overline{g}}^2 - \Delta_{\overline{g}} \overline{f}) u$. Now taking traces in the first equation of~\eqref{mod_grad_Ricci_soliton} and using~\eqref{self-solution}, we obtain
 \begin{align*}
\frac{\partial}{\partial t} u = -u |\nabla_{\overline{g}} \overline{f}|_{\overline{g}}^2 = -\Delta_{\overline{g}} u+ R_{\overline{g}} u - \alpha|\nabla_{\overline{g}} \overline{\phi}|_{\overline{g}}^2u.
 \end{align*}
Thus, we can define $\widetilde{g}(t)$, $\widetilde{\phi}(t)$ and $\widetilde{f}(t)$ on $X_0$ as in the proof of Proposition~\ref{mean_curv_flow_in_a_mod}, so that $(\widetilde{g}(t), \widetilde{\phi}(t))$ evolves by the modified $(RH)_{\alpha}$ flow on $X_0 \times [0,T)$. Besides, again we use that $(\overline{g}(t), \overline{\phi}(t))$ is a gradient steady soliton and that $\nabla_0\phi=0$ on $\Sigma_0$ to get
\begin{align*}
 \big(\widetilde{R}_{{\hat{i}}{\hat{j}}} +\widetilde{\nabla}_{\hat{i}}\widetilde\nabla_{\hat{j}}\widetilde{f} - \alpha \gamma_{\alpha\beta} \widetilde{\nabla}_{\hat{i}} \widetilde{\phi}^{\alpha} \widetilde{\nabla}_{\hat{j}} \widetilde{\phi}^{\beta}\big)|_{\Sigma_0}\!=\!0 \,\,\hbox{and}\,\,  \big(\widetilde{R}_{{\hat{i}}0} + \widetilde{\nabla}_{\hat{i}}\widetilde\nabla_0\widetilde{f}\big)|_{\Sigma_0}\!=\!0.
\end{align*}
As in the proof of Theorem~\ref{principal_theorem}, the result of the corollary follows from Corollary~\ref{key_prop_monotonicity} 
and the identity 
$$\dfrac{\partial}{\partial t} H_{\widetilde{g}} = 
\dfrac{\partial}{\partial t} H_{\overline{g}} -
\langle \widehat{\nabla}_{\overline{g}} \overline{f}, \widehat{\nabla}_{\overline{g}} H_{\overline{g}}\rangle_{\overline{g}}.$$
This completes the proof.
\end{proof}

\begin{remark} \label{rem:recover4}
Suppose $M = \mathbb{R}^m$, $\overline{g}_{ij}(t) = \delta_{ij}$ and $\overline{\phi}(t) = \phi$ is a constant. 
Let $L$ be a linear function on $\mathbb{R}^m$ and define 
$\overline{f} = L + t |\nabla L|^2$. Letting $V(t) = 
-\widehat{\nabla} \overline{f}$, Corollary~\ref{type-Harnack-background}  coincides with~\cite[Lem.~3.2]{Hamilton}.
\end{remark}

\section{\bf Characterization of mean curvature solitons}\label{sect:grad:sol}
In this section, we show how to construct a family of mean curvature solitons and establish a characterization of such a family. For it, let $M$ be an $m$-dimensional smooth manifold, and let $(\overline{g}(t), \overline{\phi}(t))$ be a gradient soliton to the $(RH)_{\alpha}$ flow on $M$ for some initial value $(g,\phi)$ and with potential function $\overline f=\psi^*_tf$, where $\{\psi_t\}$ is the smooth one-parameter family of diffeomorphisms of $M$ generated by $Y_t = \frac{\nabla_g f}{\sigma(t)}$, with $\sigma(t) = \kappa(T-t)$ and $\psi_{T-\kappa} = \operatorname{Id},$ where $\kappa = 1$ in the shrinking case (for $t \in (-\infty, T$)), $\kappa = -1$ in the expanding case (for $t \in (T,+\infty)$) and $\sigma(t) = 1$ in the steady case (for $t \in  \mathbb{R})$ with  $\psi_{0} = \operatorname{Id}.$ 

Given an $(m-1)$-dimensional compact smooth manifold $\Sigma$ without boundary, let $\{x(\,\cdot\,,t)\}$ be a smooth one-parameter family of immersions of $\Sigma$ into $M$, where $x(\,\cdot\,,t):=\psi\big(\,\cdot\,,-t +2(T -\kappa)\big)$ and $x(\,\cdot\,,t):=\psi(\,\cdot\,,-t)$ in the steady case. Note that $x(\,\cdot\,,T-\kappa)=\psi(\,\cdot\,,T-\kappa)=\operatorname{Id}$ and $x(\,\cdot\,,0)=\psi(\,\cdot\,,0)=\operatorname{Id}$. Moreover, when considering $x(\cdot, t) :=\psi\big(\cdot,-t+2(T-\kappa)\big),$ we are assuming $t \in \big(2(T-1),T\big)$ in the shrinking case,  $t \in \big(T, 2(T+ 1)\big)$ in the expanding case, and $t\in \mathbb{R}$ in the steady case. For each $t,$ set  $x_t = x(\,\cdot\,, t),$ $\Sigma_t$ for the hypersurface $x_t(\Sigma)$ of $(M,\overline g(t)),$ i.e., $\Sigma_t:=(x_t(\Sigma), \overline g(t)),$ and $\mathscr G :=\{\Sigma_t\}.$ In particular, if $\mathscr G$ evolves by MCF in the $(\overline g, \overline \phi)-(RH)_{\alpha}$ flow background on $M$, then it is a family of mean curvature solitons. 
Indeed, since $\overline g(t)=\sigma(t)\psi_t^*g$, we have $\nabla_g  f= \sigma(t)\nabla_{\overline g(t)}  \overline f$, and then
\begin{align*}
 H(p,t) &= \overline g(t)\Big( \frac{\partial}{\partial t} x(p,t),  e(p, t)\Big) =-\overline g(t)\Big(  \frac{ \nabla_g  f(p)}{\sigma(t)},  e(p, t)\Big) \\
 &= -\overline g(t)\Big(   \nabla_{\overline g(t)}  \overline f(p),  e(p, t)\Big) =- e(p, t) \overline f(p).
\end{align*}
It proves our claim. 

Theorem~\ref{NazasCharacterization} states that if $\Sigma$ is an $f$-minimal hypersurface of $(M,g)$, then $\mathscr G$ is a family of mean curvature solitons in the $(\overline g, \overline \phi)-(RH)_{\alpha}$ flow background on $M$. Moreover, any family $\mathscr F$ of mean curvature solitons in the $(\overline g, \overline \phi)-(RH)_{\alpha}$ flow background on $M$ is given by $\mathscr G$ up to reparametrization, as proved below.

\begin{proof}[\bf Proof of Theorem~\ref{NazasCharacterization}]\label{test2}
Let $\Sigma$ be a hypersurface of $(M,g)$ satisfying $H + e_0 f = 0$ on $\Sigma,$ where $e_{0}$ is the  unit normal vector field on $\Sigma$. Take $\mathscr G =\{\Sigma_t\}$ the smooth one-parameter family of isometric immersions of $\Sigma$ into $M$ as above, so that $e_0 = \sqrt{\sigma (t)} e(\,\cdot \,, t),$ and then $ \mathcal A_{e_0} = \sqrt{\sigma (t)}  \mathcal A_{e(\cdot,t)} $ that implies $H = \sqrt{\sigma(t)} H(\cdot, t).$ So, $H(\,\cdot\,,t) + e(\,\cdot\,, t) \overline f=0$. Thus,
\begin{align*}
\Big(\frac{\partial}{\partial t}  x (\,\cdot\,, t)\Big)^\perp &= \overline g(t)\Big(\frac{\partial}{\partial t}  x (\,\cdot\,, t), e(\,\cdot\,,t)\Big)e(\,\cdot\,,t)
= -\overline g(t)\Big(  \frac{\nabla_g f}{\sigma(t)}, e(\,\cdot\,, t) \Big)e(\,\cdot\,,t) \\
&= -\overline g(t)\Big(   \nabla_{\overline g(t)}  \overline f,  e(\,\cdot\,, t)\Big) e(\,\cdot\,,t)=- e(\,\cdot\,, t) (\overline f) e(\,\cdot\,, t)  \\
&= H(\,\cdot\,,t) e(\,\cdot\,, t).   
\end{align*}
Now, we affirm that if a smooth family of hypersurfaces $\Sigma_t = x_t(\Sigma)$ satisfies $\langle\frac{\partial}{\partial t} x(p,t), e(p,t)\rangle \!=\!  H(p,t)$, then it can be everywhere locally reparametrized to a mean curvature flow. Indeed, if $\frac{\partial}{\partial t} x(p,t) = H(p,t) e(p,t) + X(p,t), $ where $X(p,t) \in d x_t(Tp\Sigma)\,\,\,\forall p \in \Sigma,$ take $\{\varphi_t\}$ the smooth one-parameter family of diffeomorphisms of $\Sigma$ generated by $Y(p,t) =  -[dx_t]^{-1}(X(p,t))$ and then consider the reparametrization $\widetilde x(p,t) = x(\varphi_t (p), t).$ By a straightforward computation  
 $\{\widetilde \Sigma _t := \widetilde x_t(\Sigma)\}$ evolves by MCF  in the $(\overline g, \overline \phi)-(RH)_{\alpha}$ flow background on $M$. Finally, by a simple analysis of this proof, we also show that any family $\mathscr F$ of mean curvature solitons is given by $\mathscr G$ up to reparametrization.
\end{proof}

\begin{remark}
The previous theorem recovers Thm.~3 in Gomes and Hudson~\cite{Gomes_Hudson}, for $\phi \in C^\infty(M)$; and Prop.~4.3 in Yamamoto~\cite{yamamoto2020meancurvature} in the case of gradient shrinking Ricci soliton and  $\phi$ constant.
\end{remark}

\section{\bf How to construct mean curvature solitons in the Ricci flow coupled with harmonic map heat flow background}\label{section:how:to:construct}

In this section, we show how to obtain a self-similar solution to the Ricci flow coupled with harmonic map heat flow, and how to obtain mean curvature solitons for MCF in a Ricci flow coupled with harmonic map heat flow background. For explicit examples of mean curvature solitons for MCF in a Ricci flow background, see Yamamoto~\cite{yamamoto2018examples}. For explicit examples of mean curvature solitons for MCF in an extended Ricci flow background, see Gomes and Hudson~\cite{Gomes_Hudson}.

Let $g_{ij} = \frac{1}{F^2}\delta_{ij}$ be a Riemannian metric on $\mathbb{R}^{m}$, and let $\gamma_{\alpha\beta} = \frac{1}{G^2}\delta_{\alpha\beta}$ be a Riemannian metric on $\mathbb{R}^{n}$, where  
$F$ and $G$ are nonzero smooth functions on $\mathbb{R}^{m}$ and $\mathbb{R}^n$, respectively. Consider 
\begin{subequations}\label{Bizu}
\begin{empheq}[left=\empheqlbrace]{align}
&\operatorname{Ric} + \nabla^2 f - \alpha \nabla \phi  \otimes \nabla \phi = \lambda g,\label{Bizu1}\\[1ex]
&\tau_{g,\gamma} \phi = \langle \nabla f, \nabla \phi \rangle\label{Bizu2}.
\end{empheq}
\end{subequations}
Since the metric $g_{ij}$ is conformal to $\delta_{ij}$, it is well known (see, e.g., \cite{besse2007einstein}) that 
\begin{align*}(\operatorname{Ric})_{ij} &=\frac{1}{F^2}\Big((m - 2)F  F_{x_ix_j} + \Big(F\sum_k F_{x_kx_k} - (m-1) \sum_k  F_{x_k}^2\Big)\delta_{ij}\Big)\\
(\nabla^2 h)_{ij} &= h_{x_ix_j} +\frac{F_{x_j}}{F} h_{x_i} +\frac{F_{x_i}}{F}
h_{x_j} \quad \forall i \neq j\\
(\nabla^2 h)_{ii} &= h_{x_ix_i} + 2\frac{F_{x_i}}{F} h_{x_i} -\sum_k\frac{F_{x_k}}{F} h_{x_k} \quad \forall i \\
\nabla h &= F^2 \sum_i h_{x_i}\partial_i\\
\langle\nabla\phi,\nabla f\rangle&= F^2\sum_k f_{x_k} \phi^{\lambda}_{x_k}\partial_{\lambda}|_{\phi}
\end{align*}
for any smooth functions $F$ and $h$ on $\mathbb{R}^{m}$. Hence,
$$\Delta h = F^2 \Big(\sum_k h_{x_kx_k} + (2 - m) \frac{1}{F} \sum_kF_{x_k}h_{x_k}\Big).$$
Moreover,
\[\nabla \phi  \otimes \nabla \phi(\partial_i,\partial_j) =\gamma_{\alpha\beta} \circ \phi \nabla_i\phi^{\alpha}\nabla_j\phi^{\beta} = \frac{1}{(G \circ \phi)^2} \phi^{\alpha}_{x_i}\phi^{\alpha}_{x_j}. \]
It is also known that
\[
\Gamma_{\alpha\beta}^\theta = -\delta_{\beta \theta} \frac{G_{y_\alpha}}{G} - \delta_{\theta\alpha} \frac{G_{y_\beta}}{G} + \delta_{\alpha\beta} \frac{G_{y_\theta}}{G}.
\] 
Thus,
\begin{align}\label{tensor:field:Euclidean}
&\tau_{g,\gamma} \phi\nonumber\\
=& \Big( \Delta\phi^\theta+ (\Gamma^{\theta}_{\alpha\beta}\circ\phi)g(\nabla\phi^{\alpha},\nabla\phi^{\beta})\Big)\partial_{\theta}|_{\phi}\nonumber\\
=& \Bigg( F^2 \Big(\sum_k \phi^{\theta}_{x_kx_k} + (2 - m) \frac{1}{F} \sum_kF_{x_k}\phi^{\theta}_{x_k}\Big)+ F^2(\Gamma^{\theta}_{\alpha\beta}\circ\phi) \sum_k\phi_{x_k}^{\alpha}\phi^{\beta}_{x_k}\Bigg)\partial_{\theta}|_{\phi}\nonumber\\
=& \Bigg( F^2 \Big(\sum_k \phi^{\theta}_{x_kx_k} + (2 - m) \frac{1}{F} \sum_kF_{x_k}\phi^{\theta}_{x_k}\Big)\nonumber\\
&+ F^2\Big(-\delta_{\beta \theta} \frac{G_{y_\alpha}}{G} - \delta_{\theta\alpha} \frac{G_{y_\beta}}{G} + \delta_{\alpha\beta} \frac{G_{y_\theta}}{G}\Big)\circ \phi \sum_k\phi_{x_k}^{\alpha}\phi^{\beta}_{x_k}\Bigg)\partial_{\theta}|_{\phi}.
\end{align}
 
Now we show how to find solutions of Eq.~\eqref{Bizu1} (as well of~\eqref{Bizu2}) of the form 
$f(r)$ and $\phi(r)$, where $r  = \|x\|$ on $\mathbb R^m\setminus \{0\}$.

\begin{proposition}\label{Airtonvaivai}
Consider $\mathbb R^m$ with the metric $g_{ij} = \frac{1}{F^2}\delta_{ij}$, and $\mathbb R^n$ with the metric $\gamma_{\alpha\beta} = \frac{1}{G^2}\delta_{\alpha\beta}$  for some nonzero smooth functions $F$ on $\mathbb R^m$ depending only on $r  = \|x\|$ and $G$ on $\mathbb R^n$ depending only on $\rho  = \|y\|.$ We can obtain smooth functions  $f (r)$ and maps $ \phi(r)$ satisfying~\eqref{Bizu1}  (as well~\eqref{Bizu2}),  by means of the system
\begin{align*}
\begin{cases}
\dfrac{(2m - 3)F'} {rF}+ \dfrac{f'}{r} + \dfrac{F''}{F}   - (m - 1)\Big(\dfrac{F'}{F}\Big)^2 -\dfrac{F'}{F}f'  = \dfrac{\lambda}{F^2}\\[1ex]
{\phi^\theta}'' \!+\! \Big(\!\frac{m - 1}{r}  \!-\! (m-2)\frac{F'}{F} - f'\!\Big){\phi^\theta}' \!+\!  {\phi^\alpha}' {\phi^\beta}'\Big(\!\delta_{\alpha\beta} \frac{\dot G{y_\theta}}{G\rho}-
\delta_{\beta \theta} \frac{\dot G{y_\alpha}}{G\rho} \!-\! \delta_{\theta\alpha} \frac{\dot G{y_\beta}}{G\rho}\!\Big)\circ \phi \!=\!0
\end{cases}    
\end{align*}
for all $x \neq 0$ and $y \neq 0,$ where the superscripts $'$ and $\overset{\cdot}{}$ denote the derivative with respect to $r$ and $\rho,$ respectively. 
\end{proposition}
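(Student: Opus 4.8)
The plan is to feed the rotationally symmetric ansatz $f=f(r)$, $\phi=\phi(r)$ (so that each component $\phi^\theta$ depends only on $r=\|x\|$, on $\mathbb R^m\setminus\{0\}$) directly into \eqref{Bizu1} and \eqref{Bizu2}, using the coordinate expressions for $\operatorname{Ric}$, $\nabla^2 h$, $\nabla\phi\otimes\nabla\phi$, $\tau_{g,\gamma}\phi$ and the Christoffel symbols $\Gamma^\theta_{\alpha\beta}$ that are recorded immediately before the statement (this is exactly where the hypotheses that $F$ depends only on $r$ and $G$ only on $\rho=\|y\|$ are used). The only elementary inputs needed are the chain-rule identities for a radial function $h$ on Euclidean space, namely $h_{x_i}=h'(r)\,x_i/r$ and $h_{x_ix_j}=h''(r)\,x_ix_j/r^2+h'(r)\big(\delta_{ij}/r-x_ix_j/r^3\big)$, together with $\sum_k x_k^2=r^2$, and their counterparts $G_{y_\alpha}=\dot G(\rho)\,y_\alpha/\rho$ for $G$ on $\mathbb R^n$; this also explains the restriction $x\neq 0$, $y\neq 0$.

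I would treat \eqref{Bizu2} first, as it is the cleaner reduction. Inserting the radial ansatz into \eqref{tensor:field:Euclidean}, the flat Laplacian of $\phi^\theta$ becomes ${\phi^\theta}''+\tfrac{m-1}{r}{\phi^\theta}'$, the contraction $\sum_k F_{x_k}\phi^\theta_{x_k}$ collapses to $F'{\phi^\theta}'$, and $\sum_k\phi^\alpha_{x_k}\phi^\beta_{x_k}$ collapses to ${\phi^\alpha}'{\phi^\beta}'$; on the right-hand side, $\langle\nabla f,\nabla\phi\rangle=F^2\sum_k f_{x_k}\phi^\theta_{x_k}\,\partial_\theta|_\phi$ collapses to $F^2 f'{\phi^\theta}'\,\partial_\theta|_\phi$. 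The common prefactor $F^2$ cancels, and substituting $\Gamma^\theta_{\alpha\beta}\circ\phi=\big(\tfrac{\dot G}{G\rho}(\delta_{\alpha\beta}y_\theta-\delta_{\beta\theta}y_\alpha-\delta_{\theta\alpha}y_\beta)\big)\circ\phi$, which follows at once from $G_{y_\alpha}=\dot G\,y_\alpha/\rho$, produces exactly the second ODE of the statement.

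For \eqref{Bizu1} the key observation is that, evaluated on radial data, each of $\operatorname{Ric}$, $\nabla^2 f$ and $\nabla\phi\otimes\nabla\phi$ is a symmetric $2$-tensor of the form $P(r)\,x_ix_j/r^2+Q(r)\,\delta_{ij}$, whereas $\lambda g_{ij}=\lambda F^{-2}\delta_{ij}$ has only a $\delta_{ij}$-part; hence \eqref{Bizu1} separates into its $\delta_{ij}$-component and its $x_ix_j/r^2$-component. For the $\delta_{ij}$-component I would track the contributions: from the Ricci formula the $(m-2)FF_{x_ix_j}$ block contributes $\tfrac{(m-2)F'}{rF}$ to $\delta_{ij}$ (via the $h'\delta_{ij}/r$ piece above, applied to $h=F$), while the $F\sum_kF_{x_kx_k}-(m-1)\sum_kF_{x_k}^2$ block contributes $\tfrac{F''}{F}+\tfrac{(m-1)F'}{rF}-(m-1)(F'/F)^2$; these add to $\tfrac{(2m-3)F'}{rF}+\tfrac{F''}{F}-(m-1)(F'/F)^2$. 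The Hessian of $f$ contributes $\tfrac{f'}{r}-\tfrac{F'f'}{F}$, and $\nabla\phi\otimes\nabla\phi$ contributes nothing to the $\delta_{ij}$-part (for radial $\phi$ it is purely of $x_ix_j/r^2$-type). Equating the total $\delta_{ij}$-coefficient against $\lambda/F^2$ yields the first ODE.

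The step I expect to require the most care is precisely this separation: the coefficient $\tfrac{(2m-3)F'}{rF}$ is not the coefficient appearing in any single term but the sum of two genuinely distinct contributions to $\delta_{ij}$ coming from the two blocks of the conformal Ricci formula, so one must keep the $x_ix_j/r^2$ and $\delta_{ij}$ pieces strictly separate throughout and verify that the uniform $F^{2}$ (respectively $F^{-2}$) factor divides out cleanly. In addition one has to handle the complementary $x_ix_j/r^2$-component of \eqref{Bizu1}, namely $\tfrac{m-2}{F}\big(F''-\tfrac{F'}{r}\big)+f''-\tfrac{f'}{r}+2\tfrac{F'f'}{F}-\alpha\,\sum_\alpha({\phi^\alpha}')^2/(G\circ\phi)^2=0$; in the reduction it is this radial component that carries the $\alpha$ and $|\nabla\phi|$ information, and reconciling it with the angular ODE and the $\phi$-equation of the system is the analytically delicate point of the argument (morally, it is the rotationally symmetric shadow of the contracted second Bianchi identity for $(RH)_\alpha$ solitons). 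Once the two tensorial components of \eqref{Bizu1} and the reduction of \eqref{Bizu2} are all in hand, dividing out the conformal factors gives the normalized system in the statement, and standard ODE existence theory then provides the functions $f(r)$ and $\phi(r)$.
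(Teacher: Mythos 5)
Your proposal is correct and follows essentially the same route as the paper: insert the radial ansatz into the conformal-metric formulas, reduce \eqref{Bizu2} via \eqref{tensor:field:Euclidean}, and split \eqref{Bizu1} into the $\delta_{ij}$- and $x_ix_j/r^2$-parts, which is exactly the paper's separation into the $i=j$ and $i\neq j$ components (its equations \eqref{GuerraeSUA1} and \eqref{GuerraeSUA}), yielding the same two ODEs with the same coefficient bookkeeping. The extra radial ($x_ix_j/r^2$) equation you flag as delicate appears in the paper as well, as the bracket in \eqref{GuerraeSUA}, and is treated there no more explicitly than in your write-up.
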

\begin{proof}
We need to analyze equation~\eqref{Bizu1} in two cases. For $i \neq j$, it is rewritten as
\begin{align}\label{Holds-rad}
(m - 2) \frac{F_{x_ix_j}}{F} + f_{x_i x_j} +\frac{F_{x_j}}{F} f_{x_i} +\frac{F_{x_i}}{F}f_{x_j} - \alpha\gamma_{\theta\beta} \phi^{\theta}_{x_i}\phi^{\beta}_{x_j} = 0,
\end{align}
and for $i=j$,
\begin{align}\label{BIZU1-rad}
\nonumber &(m - 2) \frac{F_{x_ix_i}}{F} + \sum_k \frac{ F_{x_k x_k} }{F} - (m - 1)\sum_k\frac{F^2_{x_k}}{F^2} + f_{x_i x_i} +2\frac{F_{x_i}}{F} f_{x_i}-\sum_k\frac{F_{x_k}}{F}f_{x_k}\\
&  - \alpha \gamma_{\theta\beta} \phi^{\theta}_{x_i}\phi^{\beta}_{x_i}  = \frac{\lambda}{F^2}.
\end{align}
Equation~\eqref{Bizu2} is rewritten by means of~\eqref{tensor:field:Euclidean} as
\begin{align}\label{add:Naza-rad}
&\Bigg( F^2 \Big(\sum_k \phi^\theta_{x_kx_k} + (2 - m) \frac{1}{F} \sum_kF_{x_k}\phi^\theta_{x_k}\Big)+ F^2\Big(-\delta_{\beta \theta} \frac{G_{y_\alpha}}{G} - \delta_{\theta\alpha} \frac{G_{y_\beta}}{G} \nonumber\\
&+ \delta_{\alpha\beta} \frac{G_{y_\theta}}{G}\Big) \circ \phi\sum_k\phi_{x_k}^{\alpha}\phi^{\beta}_{x_k}\Bigg)\partial_\theta|_{\phi}= F^2 \sum_kf_{x_k}\phi^\theta_{x_k}\partial_\theta|_{\phi}. 
\end{align}
For any radial smooth function $h(r)$ on $\mathbb{R}^m$, we have $h_{x_i}= h'x_i/r$ and $h_{x_ix_j} =x_ix_j ( h''/r^2 - h'/r^3)$, for all $i \neq j$. Besides, $h_{x_i}= h'x_i/r$ and $h_{x_ix_i} =  x_i^2(h''/r^2 - h'/r^3) +  h'/r ,$  for all $i.$ Thus, from \eqref{Holds-rad} and~\eqref{BIZU1-rad}, we obtain
\begin{align}\label{GuerraeSUA}
x_ix_j\Big[\frac{m - 2 }{F}\Big(\frac{F''}{r^2} - \frac{F'}{r^3}\Big) + \frac{f''}{r^2} - \frac{f'}{r^3} +2\frac{F'}{F} \frac{f'}{r^2}- \alpha \frac{\gamma_{\theta\beta}}{r^2} (\phi^\theta)'(\phi^\beta)'\Big] = 0,
\end{align}
 for all $i\neq j$, and
\begin{align}\label{GuerraeSUA1}
&x_i^2\Big[ \frac{m - 2} {F}\Big(\frac{F''}{r^2} - \frac{F'}{r^3}\Big)  + \frac{f''}{r^2} - \frac{f'}{r^3} +2\frac{F'}{F} \frac{f'}{r^2} - \alpha \frac{\gamma_{\theta\beta}}{r^2} (\phi^\theta)'(\phi^\beta)'  \Big] +  \frac{(m - 2)F'} {rF}\nonumber
\\
&+ \frac{f'}{r} +\frac{F''}{F} 
+ \frac{(m - 1)F'}{F r}    - (m - 1)\Big(\frac{F'}{F}\Big)^2 -\frac{F'}{F}f'  = \frac{\lambda}{F^2},
\end{align}
for all $i$.  The first part of the proposition follows from \eqref{GuerraeSUA} and~\eqref{GuerraeSUA1}. The second one is a straightforward computation from \eqref{add:Naza-rad}.
\end{proof}
\begin{remark}
For constructing a family of mean curvature solitons for MCF in the corresponding self-similar solution to the $(g(t),\phi(t))-(RH)_{\alpha}$ flow background on $M$, one can to use Proposition~\ref{Airtonvaivai} and to consider an $f$-minimal hypersurface $\Sigma$ of $M$ (although we know that hard work is needed to find $f$-minimal hypersurfaces for this case), and then to proceed as in Theorem~\ref{NazasCharacterization} to obtain such a family.    
\end{remark}

\section{\bf Acknowledgements}
The authors would like to express their sincere thanks to the Institute of Mathematics at the Carl von Ossietzky Universität Oldenburg, where part of this work was carried out. The first and second authors are grateful to Prof. Dr. Boris Vertman for their warm hospitality.

\end{document}